% !!!IMPORTANT NOTE: Please read carefully all information including those preceded by % sign
%Before you compile the tex file please download the class file AIMS.cls from the following URL link to the
%local folder where your tex file resides. http://aimsciences.org/journals/tex-sample/AIMS.cls.
\documentclass{ipi}
\usepackage{amsmath}
  \usepackage{paralist}
  \usepackage{graphics} %% add this and next lines if pictures should be in esp format
  \usepackage{epsfig} %For pictures: screened artwork should be set up with an 85 or 100 line screen
\usepackage{graphicx}  \usepackage{epstopdf}%This is to transfer .eps figure to .pdf figure; please compile your paper using PDFLeTex or PDFTeXify.
 \usepackage[colorlinks=true]{hyperref}
   % Warning: when you first run your tex file, some errors might occur,
   % please just press enter key to end the compilation process, then it will be fine if you run your tex file again.
   % Note that it is highly recommended by AIMS to use this package.
\hypersetup{urlcolor=blue, citecolor=red}

  \textheight=8.2 true in
   \textwidth=5.0 true in
    \topmargin 30pt
     \setcounter{page}{1}

% The next 5 line will be entered by an editorial staff.

 % Please minimize the usage of "newtheorem", "newcommand", and use
 % equation numbers only situation when they provide essential convenience
 % Try to avoid defining your own macros

\newtheorem{theorem}{Theorem}[section]
\newtheorem{corollary}{Corollary}

\newtheorem{lemma}[theorem]{Lemma}

\theoremstyle{definition}

\newtheorem{remark}{Remark}

\newcommand{\rank}{{\hbox{rank}}}

\usepackage[usenames]{color}
\usepackage[final,notref,notcite]{showkeys}
\usepackage{subfigure}
\usepackage{multirow}
\usepackage[ruled,vlined]{algorithm2e}

\interfootnotelinepenalty=10000
\usepackage{float}
\usepackage[font=footnotesize]{caption}

\newcommand{\bm}[1]{\boldsymbol{#1}}

\newcommand{\st}{\text{ subject to }}

\newcommand{\mbfa}{\mathbf{A}}
\newcommand{\mbfb}{\mathbf{B}}
\newcommand{\mbfc}{\mathbf{C}}

\newcommand{\mbfg}{\mathbf{G}}
\newcommand{\mbfh}{\mathbf{H}}
\newcommand{\mbfi}{\mathbf{I}}
\newcommand{\mbfm}{\mathbf{M}}
\newcommand{\mbfq}{\mathbf{Q}}
\newcommand{\mbfr}{\mathbf{R}}

\newcommand{\mbfu}{\mathbf{U}}
\newcommand{\mbfv}{\mathbf{V}}
\newcommand{\mbfx}{\mathbf{X}}
\newcommand{\mbfy}{\mathbf{Y}}
\newcommand{\mbfz}{\mathbf{Z}}

\newcommand{\mbr}{\mathbb{R}}

\newcommand{\mcb}{\mathcal{B}}
\newcommand{\mcc}{\mathcal{C}}

\newcommand{\mck}{\mathcal{K}}

\newcommand{\mcm}{\mathcal{M}}
\newcommand{\mcn}{\mathcal{N}}
\newcommand{\mcp}{\mathcal{P}}

\newcommand{\mcr}{\mathcal{R}}

\newcommand{\mct}{\mathcal{T}}

\newcommand{\mcw}{\mathcal{W}}
\newcommand{\mcx}{\mathcal{X}}
\newcommand{\mcy}{\mathcal{Y}}
\newcommand{\mcz}{\mathcal{Z}}

\newcount\refnum\refnum=0
\def\myref{{\global\advance\refnum by 1} {\bf \large Lecture \the \refnum. }}
\newcommand{\bfx}{{\bf x}}

\newcommand{\bfy}{{\bf y}}

\newcommand{\unfold}{\mathbf{unfold}}
\newcommand{\fold}{\mathbf{fold}}
\newcommand{\fit}{\mathbf{fit}}

\DeclareMathOperator*{\argmin}{argmin}
\DeclareMathOperator*{\argmax}{argmax}

%% editing
\usepackage[normalem]{ulem}

%% Place the running title of the paper with 40 letters or less in []
 %% and the full title of the paper in { }.
\title[Parallel matrix factorization for low-rank tensor completion]
      {Parallel matrix factorization for low-rank tensor completion}

% Place all authors' names in [ ] shown as running head;
% No more than 40 letters. Leave { } empty
% Please use `and' to connect the last two names if applicable
\author[Y. Xu, R. Hao, W. Yin and Z. Su]{}

% It is required to enter 2010 MSC.
\subjclass{Primary: 94A08, 94A12; Secondary: 90C90.}
% Please provide minimum  5 keywords.
 \keywords{higher-order tensor, low-rank matrix completion, low-rank tensor completion, alternating least squares, non-convex optimization}

% Email address of each of all authors is required.
% You may list email addresses of all other authors, separately.
 \email{yangyang.xu@rice.edu}
\email{haoruru.math@gmail.com}
 \email{wotaoyin@math.ucla.edu}
 \email{zxsu@dlut.edu.cn}

% Put your short thanks below. For long thanks/acknowlegements,
%please go to the last acknowlegments section.
%\thanks{Y. Xu is supported by NSF grant ECCS-1028790. R. Hao and Z. Su are supported by NSFC grants 61173103 and U0935004. R. Hao's visit to UCLA is supported by China Scholarship Council. W. Yin is partially supported by NSF grants DMS-0748839 and DMS-1317602, and ARO/ARL MURI grant FA9550-10-1-0567.}

\begin{document}

\maketitle

% Enter the first author's name and address:
\centerline{\scshape Yangyang Xu}
\medskip
{\footnotesize
% please put the address of the first author
 \centerline{ Department of Computational and Applied Mathematics, Rice University}
   \centerline{ Houston, TX 77005, USA}
} % Do not forget to end the {\footnotesize by the sign }

\medskip

\centerline{\scshape Ruru Hao}
\medskip
{\footnotesize
 % please put the address of the second  and third author
 %\centerline{Department of Mathematics, UCLA, Los Angeles, CA 90095-1555, USA}
   \centerline{School of Mathematical Sciences, Dalian University of Technology, Dalian, China.}
}

\medskip

\centerline{\scshape Wotao Yin}
\medskip
{\footnotesize
 % please put the address of the second  and third author
 \centerline{Department of Mathematics, University of California, Los Angeles, CA 90095, USA}
   %\centerline{}
}

\medskip

\centerline{\scshape Zhixun Su}
\medskip
{\footnotesize
 % please put the address of the second  and third author
 \centerline{School of Mathematical Sciences, Dalian University of Technology, Dalian, China.}
   %\centerline{}
}

\bigskip

% The name of the associate editor will be entered by an editorial staff
% "Communicated by the associate editor name" is not needed for special issue.
% \centerline{(Communicated by the associate editor name)}

%The abstract of your paper
\begin{abstract}
Higher-order low-rank tensors naturally arise in many applications including hyperspectral data recovery, video inpainting, seismic data reconstruction, and so on. %These problems can be formulated as low-rank tensor completion (LRTC). In this paper,
We propose a new model to recover a low-rank tensor by simultaneously performing low-rank matrix factorizations to the all-mode matricizations of the underlying tensor. An alternating minimization algorithm is applied to solve the model, along with two adaptive rank-adjusting strategies when the exact rank is not known. %One strategy adaptively decreases the rank estimates from overestimated ranks, and the other dynamically increases the rank estimates from underestimated ranks.

Phase transition plots reveal that  our algorithm can recover    a variety of synthetic low-rank tensors  from significantly fewer samples than the compared methods, which include a matrix completion method applied to tensor recovery and two state-of-the-art tensor completion methods.  Further tests on real-world data show similar advantages. Although our model is non-convex, our algorithm performs consistently throughout the tests and gives better results than the compared methods, some of which are based on convex models. In addition, subsequence convergence of our algorithm can be established in the sense that any limit point of the iterates satisfies the KKT condtions.
\end{abstract}

\section{Introduction}
\emph{Tensor} is a generalization of \emph{vector} and \emph{matrix}. A vector is a first-order (also called one-way or one-mode) tensor, and a matrix is a second-order tensor. Higher-order tensor arises in many applications such as 3D image reconstruction \cite{sauve19993d}, video inpainting \cite{patwardhan2007video}, hyperspectral data recovery \cite{li2010tensor, xing2012dictionary}, higher-order web link analysis \cite{kolda2005higher}, personalized web search \cite{sun2005cubesvd}, and seismic data reconstruction \cite{kreimer2012tensor}. In this paper, we focus on the recovery of higher-order tensors that are (exactly or approximately) low-rank and have missing entries. We dub the problem as \emph{low-rank tensor completion} (LRTC). The introduced model and algorithm can be extended in a rather straightforward way to recovering low-rank tensors from their linear measurements.

LRTC can be regarded as an extension of low-rank matrix completion \cite{candes2009exact}. To recover a low-rank tensor from its partially observed entries, one can unfold it into a matrix and apply a low-rank matrix completion algorithm such as FPCA \cite{ma2011fixed}, APGL \cite{toh2010accelerated}, LMaFit \cite{wen2012lmafit}, the alternating direction method \cite{chen2012matrix, Xu-Yin-Wen-Zhang-11}, the $\ell_q$ minimization method \cite{lai2013improved}, and so on. However, this kind of method utilizes only one mode low-rankness of the underlying tensor. We are motivated and convinced by the results \cite{liu2013tensor} that utilizing all mode low-ranknesses of the tensor gives much better performance.

Existing methods for LRTC  in \cite{liu2013tensor, gandy2011tensor} employ matrix nuclear-norm minimization and use the singular value decomposition (SVD) in their algorithms, which become very slow or even not applicable for large-scale problems. To tackle this difficulty, we apply \emph{low-rank matrix factorization} to each mode unfolding of the tensor in order to enforce low-rankness and  update the matrix factors alternatively, which is computationally much cheaper than SVD.

Our approach is non-convex, and the sizes of the matrix factors must be specified in the algorithm. Non-convexity makes it difficult for us to predict the performance of our approach in a theoretical way, and in general, the performance can vary to the choices of algorithm and starting point. We found cyclic updates of the unknown variables in the model to perform well enough. The sizes of the matrix factors dictate the rank of the recovered tensor. If they are fixed to values significantly different from the true rank, the recovery can overfit or underfit. On the other hand, during the run time of our algorithms, there are simple ways to adaptively adjust the factor sizes. %calculations to reveal whether the current rank is underestimated or overestimated and thus allow the algorithm to adaptively adjust the estimate.
In short, the all-mode matricizations, cyclic block minimization, and adaptive adjustment are the building blocks of our approach. %As all-mode matricizations are used in the model, different modes can have different weights. We found that dynamically updating the weights often improves the recovery quality.

Before introducing  our model and algorithm, we review some notation and tensor operations.

\subsection{Notation}
Following \cite{kolda2009tensor}, we use bold lower-case letters $\bfx,\bfy,\ldots$ for vectors, bold upper-case letters $\mbfx,\mbfy,\ldots$ for matrices, and bold caligraphic letters $\bm{\mcx},\bm{\mcy},\ldots$ for tensors. The $(i_1,\ldots,i_N)$-th component of an $N$-way tensor $\bm{\mcx}$ is denoted as $x_{i_1\ldots i_N}$. For $\bm{\mcx},\bm{\mcy}\in\mbr^{I_1\times\ldots\times I_N}$, we define their inner product in the same way as that for matrices, i.e.,
$$\langle\bm{\mcx},\bm{\mcy}\rangle=\sum_{i_1=1}^{I_1}\cdots\sum_{i_N=1}^{I_N}x_{i_1\ldots i_N}y_{i_1\ldots i_N}.$$
The Frobenius norm of $\bm{\mcx}$ is defined as $\|\bm{\mcx}\|_F=\sqrt{\langle\bm{\mcx},\bm{\mcx}\rangle}.$

A \emph{fiber} of $\bm{\mcx}$ is a vector obtained by fixing all indices of $\bm{\mcx}$ except one, and a \emph{slice} of $\bm{\mcx}$ is a matrix by fixing all indices of $\bm{\mcx}$ except two. For example, if $\bm{\mcx}\in\mbr^{2\times2\times2}$ has two frontal slices (with the third index fixed)
\begin{equation}\label{eq:example}
\mbfx(:,:,1)=\begin{bmatrix}
1 & 3\\
2 & 4
\end{bmatrix},\quad
\mbfx(:,:,2)=\begin{bmatrix}
5 & 7\\
6 & 8
\end{bmatrix},
\end{equation}
then $[1,2]^\top$ is a mode-1 fiber (with all but the first indices fixed), $[1,3]^\top$ is a mode-2 fiber (with all but the second indices fixed), and $[1,5]^\top$ is a mode-3 fiber (with all but the third indices fixed). Its two horizontal (with the first index fixed) and two lateral slices (with the second index fixed) are respectively
$$
\mbfx(1,:,:)=\begin{bmatrix}
1 & 5\\
3 & 7
\end{bmatrix},
\mbfx(2,:,:)=\begin{bmatrix}
2 & 6\\
4 & 8
\end{bmatrix},\ \text{ and }\
\mbfx(:,1,:)=\begin{bmatrix}
1 & 5\\
2 & 6
\end{bmatrix},
\mbfx(:,2,:)=\begin{bmatrix}
3 & 7\\
4 & 8
\end{bmatrix}.
$$

The mode-$n$ \emph{matricization} (also called \emph{unfolding}) of  $\bm{\mcx}\in\mbr^{I_1\times\ldots\times I_N}$ is denoted as $\mbfx_{(n)}\in\mbr^{I_n\times \Pi_{j\neq n}I_j}$, which is a matrix with columns being the mode-$n$ fibers of $\bm{\mcx}$ in the lexicographical order. Take the tensor in \eqref{eq:example} for example. Its mode-1 and mode-3 matricizations are respectively
$$X_{(1)}=\begin{bmatrix}
1 & 3 & 5 & 7\\
2 & 4 & 6 & 8
\end{bmatrix},\ \text{ and }\
X_{(3)}=\begin{bmatrix}
1 & 2 & 3 & 4\\
5 & 6 & 7 & 8
\end{bmatrix}.$$
Relating to the matricization process, we define $\unfold_n(\bm{\mcx})=\mbfx_{(n)}$ and $\fold_n$ to reverse the process, i.e., $\fold_n(\unfold_n(\bm{\mcx}))=\bm{\mcx}$.
The $n$-rank of an $N$-way tensor $\bm{\mcx}$, denoted as $\rank_n(\bm{\mcx})$, is the rank of $\mbfx_{(n)}$, and we define the rank\footnote{Our definition relates to the Tucker decomposition \cite{tucker1966some}. Another popularly used definition is based on the CANDECOMP/PARAFAC (CP) decomposition \cite{kiers2000towards}.} of $\bm{\mcx}$ as an array: $\rank(\bm{\mcx})=(\rank(\mbfx_{(1)}),\ldots,\rank(\mbfx_{(N)}))$. We say $\bm{\mcx}$ is (approximately) low-rank if $\mbfx_{(n)}$ is (approximately) low-rank for all $n$.

\subsection{Problem formulation}
We aim at recovering an (approximately) low-rank tensor $\bm{\mcm}\in\mbr^{I_1\times\ldots\times I_N}$ from partial observations $\bm{\mcb}=\mcp_\Omega(\bm{\mcm})$, where $\Omega$ is the index set of observed entries, and $\mcp_\Omega$ keeps the entries in $\Omega$ and zeros out others. %This problem arises in many applications such as personalized web search \cite{sun2005cubesvd} and seismic data reconstruction \cite{kreimer2012tensor}. \comm{$\gets$ duplication; should be moved to the introduction.}
We  apply low-rank matrix factorization to each mode unfolding of $\bm{\mcm}$ by finding matrices $\mbfx_n\in\mbr^{I_n\times r_n},\mbfy_n\in \mbr^{r_n\times\Pi_{j\neq n}I_j}$ such that $\mbfm_{(n)}\approx\mbfx_n\mbfy_n$ for $n=1,\ldots,N$, where $r_n$ is the estimated rank, either fixed or adaptively updated. Introducing one common variable $\bm{\mcz}$ to relate these matrix factorizations, we solve the following model to recover $\bm{\mcm}$
\begin{equation}\label{eq:main}
\min_{\mbfx,\mbfy,\bm{\mcz}}\sum_{n=1}^N\frac{\alpha_n}{2}\|\mbfx_n\mbfy_n-\mbfz_{(n)}\|_F^2,\st \mcp_\Omega(\bm{\mcz})=\bm{\mcb},
\end{equation}
where $\mbfx=(\mbfx_1,\ldots,\mbfx_N)$ and $\mbfy=(\mbfy_1,\ldots,\mbfy_N)$. In the model, $\alpha_n$, $n=1,\ldots,N$, are weights and satisfy $\sum_n\alpha_n=1$. The constraint $\mcp_\Omega(\bm{\mcz})=\bm{\mcb}$ enforces consistency with the observations
%In \eqref{eq:main}, all the matrix factorizations $\mbfx_n\mbfy_n, n=1,\ldots,N$ are related to the common variable $\bm{\mcz}$. However, as $\bm{\mcz}$ is fixed, they are independent and can be performed in parallel.
and can be replaced with $\|\mcp_\Omega(\bm{\mcz})-\bm{\mcb}\|_F\le \delta$ if $\bm{\mcb}$ is contaminated by noise with a known Frobenius norm equal to $\delta$. %However, this setting assumes priori knowledge on the noise level, and
In this paper, we do not assume the knowledge of $\delta$ and thus use \eqref{eq:main} for both noiseless and noisy cases.

%The model \eqref{eq:main} is non-convex, yet the recent work \cite{mu2013square} shows that certain non-convex models can achieve exact low-rank tensor recovery with far fewer observations than those required by convex models.
The ranks $r_1,\ldots,r_N$ in \eqref{eq:main} must be specified, yet we do not assume the knowledge of their true values. To address this issue, we dynamically adjust the rank estimates in two schemes. One scheme starts from overestimated ranks and then decreases them by checking the singular values of the factor matrices in each mode. When a large gap between the $\hat{r}_n$th and  $(\hat{r}_n+1)$th singular values of the factors is found, $r_n$ is reduced to $\hat{r}_n$.   The other scheme starts from underestimated ranks and then gradually increases them if the algorithm detects slow progress.

We try to solve \eqref{eq:main} by cyclically updating $\mbfx$, $\mbfy$, and $\bm{\mcz}$. Although a global solution is not guaranteed, we demonstrate by numerical experiments that our algorithm can reliably recover a wide variety of low-rank tensors. In addition, we show that any limit point of the iterates satisfies the KKT conditions. %a global convergence result is given under boundedness assumptions.

The details will be given in Section \ref{sec:alg}.

\subsection{Related work} Our model \eqref{eq:main} can be regarded as an extension of the following model \cite{wen2012lmafit} from matrix completion to tensor completion
\begin{equation}\label{eq:lmafit}
\min_{\mbfx,\mbfy}\frac{1}{2}\|\mbfx\mbfy-\mbfz\|_F^2,\st \mcp_\Omega(\mbfz)=\mbfb,
\end{equation}
where $\mbfb=\mcp_\Omega(\mbfm)$ contains partially observed entries of the underlying (approximately) low-rank matrix $\mbfm$. If $N=2$ in \eqref{eq:main}, i.e., the underlying tensor $\bm{\mcm}$ is two-way, then it is easy to see that \eqref{eq:main} reduces to \eqref{eq:lmafit} by noting $\unfold_1(\bm{\mcm})=\unfold_2(\bm{\mcm})^\top$. The problem \eqref{eq:lmafit} is solved in \cite{wen2012lmafit} by a successive over-relaxation (SOR) method, named as LMaFit. Although \eqref{eq:lmafit} is non-convex, extensive experiments on both synthetic and real-world data demonstrate that \eqref{eq:lmafit} solved by LMaFit performs significantly better than nuclear norm\footnote{The matrix nuclear norm is the convex envelope of matrix rank function \cite{recht2010guaranteed}, and the nuclear norm minimization can promote the low-rank structure of the solution.} based convex models such as
\begin{equation}\label{eq:fpca}
\min_\mbfz \|\mbfz\|_*, \st \mcp_\Omega(\mbfz)=\mbfb,
\end{equation}
where $\|\mbfz\|_*$ denotes the nuclear norm of $\mbfz$, defined as the sum of its singular values.

The work \cite{liu2013tensor} generalizes \eqref{eq:fpca} to the tensor case, and to recover the (approximately) low-rank tensor $\bm{\mcm}$, it proposes to solve \begin{equation}\label{eq:conv}
\min_{\bm{\mcz}}\sum_{n=1}^N\alpha_n\|\mbfz_{(n)}\|_*, \st \mcp_\Omega(\bm{\mcz})=\bm{\mcb},
\end{equation}
where $\alpha_n\ge0,n=1,\ldots,N$ are preselected weights satisfying $\sum_n\alpha_n=1$. Different from our model \eqref{eq:main}, the problem \eqref{eq:conv} is convex, and in \cite{liu2013tensor}, various methods are applied to solve it such as block coordinate descent method, proximal gradient method, and alternating direction method of multiplier (ADMM).
The model \eqref{eq:conv} utilizes low-rankness of all mode unfoldings of the tensor, and as demonstrated in \cite{liu2013tensor}, it can significantly improve the solution quality over that obtained by solving \eqref{eq:fpca}, where the matrix $\mbfz$ corresponds to some mode unfolding of the tensor.

The recent work \cite{mu2013square} proposes a more ``square'' convex model for recovering $\bm{\mcm}$ as follows:
\begin{equation}\label{eq:square}
\min_{\bm{\mcz}}\|\hat{\mbfz}_{[j]}\|_*, \st \mcp_\Omega(\bm{\mcz})=\bm{\mcb},
\end{equation}
where $\hat{\bm{\mcz}}\in\mbr^{I_{i_1}\times\ldots\times I_{i_N}}$ is a tensor by relabeling mode $i_n$ of $\bm{\mcz}$ to mode $n$ for $n=1,\ldots,N$,
$$\hat{\mbfz}_{[j]}=\text{reshape}\left({\hat{\mbfz}_{(1)},\prod_{n\le j}I_{i_n},\prod_{n>j}I_{i_n}}\right),$$ and $j$ and the permutation $(i_1,\ldots,i_N)$ are chosen to make $\prod_{n\le j}I_{i_n}$ as close as to $\prod_{n>j}I_{i_n}$. The idea of reshaping a tensor into a ``square'' matrix has also appeared in \cite{jiang2014tensor} for tensor principal component analysis. As the order of $\bm{\mcm}$ is no more than three, \eqref{eq:square} is the same as \eqref{eq:fpca} with $\mbfz$ corresponding to some mode unfolding of the tensor, and it may not perform as well as \eqref{eq:conv}. However, for a low-rank tensor of more than three orders, it is shown in \cite{mu2013square} that \eqref{eq:square} can exactly recover the tensor from far fewer observed entries than those required by \eqref{eq:conv}.

%\comm{Were you motivated by this model? If this model is reviewed here, this article will have trouble getting published, unfortunately $\to$} The model that is closest to our model \eqref{eq:main} is the one considered in \cite{liu2013efficient}, which solves
%\begin{equation}\label{eq:matfac}
%\min_{\mbfx,\mbfy,\bm{\mcz}}\sum_{n=1}^N\left(\alpha_n\|\mbfy_n\|_*+\frac{\lambda}{2}\|\mbfx_n\mbfy_n-\mbfz_{(n)}\|_F^2\right),\st \mcp_\Omega(\bm{\mcz})=\bm{\mcb}, \mbfx_n^\top\mbfx_n=\mbfi,\forall n,
%\end{equation}
%where $\mbfx_n\in\mbr^{I_n\times s_n}$ and $\mbfy_n\in\mbr^{s_n\times \Pi_{j\neq n} I_j}$ for $n=1,\ldots,N$.
%Note that under the condition $\mbfx_n^\top\mbfx_n=\mbfi$, it holds that $\|\mbfx_n\mbfy_n\|_*=\|\mbfy_n\|_*$, and thus compared to our model, \eqref{eq:matfac} includes extra regularization terms $\|\mbfx_n\mbfy_n\|_*$ to promote the low-rank structure of the solution. The authors of \cite{liu2013efficient} solve \eqref{eq:matfac} by alternating minimization (ALM) with $s_n$'s fixed to relatively large values. On the contrary, our method dynamically adjusts the rank estimates $r_n$'s either starting from small values or decreasing them to relatively small values. Hence, the ALM method for \eqref{eq:matfac} involves larger factor matrices and expensive computation of SVD, and thus compares more expensive to our methd.%, though it is claimed that computing the SVD of matrices in size of $\mbfy_n$ is less expensive than that in size of $\mbfz_{(n)}$.

There are some other models proposed recently for LRTC. For example, the one in \cite{romera2013new} uses, as a regularization term, a tight convex relaxation of the average rank function $\frac{1}{N}\sum_n\rank_n(\bm{\mcm})$ and applies the ADMM method to solve the problem.  The work \cite{kressner2013low} directly constrains the solution in some low-rank manifold and employs the Riemannian optimization to solve the problem. Different from the above discussed models that use tensor $n$-rank, the model in \cite{zhang2013novel} employs the so-called \emph{tubal-rank} based on the recently proposed  tensor singular value decomposition (t-SVD) \cite{kilmer2013third}. For details about these models, we refer the readers to the papers where they are proposed.

\subsection{Organization} The rest of the paper is organized as follows. Section \ref{sec:phase} shows the phase transition of our proposed method and some existing ones. Section \ref{sec:alg} gives our algorithm with two different rank-adjusting strategies, and the convergence result of the algorithm is given in section \ref{sec:convg}. In section \ref{sec:numerical}, we compare the proposed method with some state-of-the-art methods for tensor completion on both synthetic and real-world data. Section \ref{sec:conclusion} conludes the paper, and finally, section \ref{sec:fig-table} shows all figures and tables of our numerical results.

\section{Phase transition plots}\label{sec:phase}
A phase transition plot uses greyscale colors to depict how likely a certain kind of low-rank tensors can be recovered by an algorithm for a range of different ranks and sample ratios. Phase transition plots are important means to compare  the performance of different tensor recovery methods.

We compare our method (called TMac) to the following three methods on random tensors of different kinds. In section \ref{sec:numerical}, we compare them on the real-world data including 3D images and videos.

\begin{itemize}
\item Matrix completion method for recovering low-rank tensors: we unfold the underlying $N$-way tensor $\bm{\mcm}$ along its $N$th mode and apply LMaFit \cite{wen2012lmafit} to \eqref{eq:lmafit}, where $\mbfz$ corresponds to $\unfold_N(\bm{\mcm})$. If the output is $(\tilde{\mbfx},\tilde{\mbfy})$, then we use $\fold_N(\tilde{\mbfx}\tilde{\mbfy})$ to estimate $\bm{\mcm}$.
\item Nuclear norm minimization method for tensor completion: we apply FaLRTC \cite{liu2013tensor} to \eqref{eq:conv} and use the output $\bm{\mcz}$ to estimate $\bm{\mcm}$.
\item Square deal method: we apply FPCA \cite{ma2011fixed} to \eqref{eq:square} and use the output $\bm{\mcz}$ to estimate $\bm{\mcm}$.
%\item Square matrix factorization method: we apply LMaFit to \eqref{eq:lmafit}}
\end{itemize}
We call the above three methods as MatComp, FaLRTC, and SquareDeal, respectively. We chose these methods due to their popularity and code availability. LMaFit has been demonstrated superior over many other matrix completion solvers such as APGL \cite{toh2010accelerated}, SVT \cite{MC-SVT2008}, and FPCA \cite{ma2011fixed}; \eqref{eq:conv} appears to be the first convex model for tensor completion, and FaLRTC is the first efficient and also reliable\footnote{In \cite{liu2013tensor}, the ADMM  is also coded up for solving \eqref{eq:conv} and claimed to give high accurate solutions. However, we found that it was not as reliable as FaLRTC. In addition, if the smoothing parameter $\mu$ for FaLRTC was set small, FaLRTC could also produce solutions of high accuracy.} solver of \eqref{eq:conv}; the work \cite{mu2013square} about SquareDeal appears the first work to give theoretical guarantee for low-rank higher-order tensor completion. We set the stopping tolerance to $10^{-5}$ for all algorithms except FPCA that uses $10^{-8}$ since $10^{-5}$ appears too loose for FPCA. Note that the tolerances used here are tighter than those in section \ref{sec:numerical} because we care more about the models' recoverability instead of algorithms' efficiency.

If the relative error $$\text{relerr}=\frac{\|\bm{\mcm}^{rec}-\bm{\mcm}\|_F}{\|\bm{\mcm}\|_F}\le 10^{-2},$$ the recovery was regarded as successful, where $\bm{\mcm}^{rec}$ denotes the recovered tensor.

 %We see that our method performs consistently better than the compared methods. %The results there demonstrate that our method always performs the best among the compared ones.

\subsection{Gaussian random data}\label{sec:randtest}
%This subsection compares the recoverability of TMac, MatComp, FaLRTC, and SquareDeal on Gaussian randomly generated low-rank tensors.

Two Gaussian random datasets were tested. Each tensor in the first dataset was 3-way and had the form $\bm{\mcm}=\bm{\mcc}\times_1\mbfa_1\times_2\mbfa_2\times_3\mbfa_3$, where $\bm{\mcc}$ was generated by MATLAB command \verb|randn(r,r,r)| and $\mbfa_n$ by \verb|randn(50,r)| for $n=1,2,3$. We generated $\Omega$ uniformly at random. The rank $r$ varies from 5 to 35 with increment 3 and the sample ratio $$\text{SR}=\frac{|\Omega|}{\Pi_n I_n}$$ from 10\% to 90\% with increment 5\%. In the second dataset, each tensor was 4-way and had the form $\bm{\mcm}=\bm{\mcc}\times_1\mbfa_1\times_2\mbfa_2\times_3\mbfa_3\times_4\mbfa_4$,  where $\bm{\mcc}$ was generated by MATLAB command \verb|randn(r,r,r,r)| and $\mbfa_n$ by \verb|randn(20,r)| for $n=1,2,3,4$. The rank $r$ varies from 4 to 13 with increment 1 and SR from 10\% to 90\% with increment 5\%.  For each setting, 50 independent trials were run.

Figure \ref{fig:rate-3G} depicts the phase transition plots of TMac, MatComp, and FaLRTC for the 3-way dataset, and Figure \ref{fig:rate-4G} depicts the phase transition plots of TMac and SquareDeal for the 4-way dataset. Since LMaFit usually works better than FPCA for matrix completion, we also show the result by applying LMaFit to
\begin{equation}\label{eq:sqm-lmafit}
\min_{\mbfx,\mbfy,\bm{\mcz}} \|\mbfx\mbfy-\hat{\mbfz}_{[j]}\|_F^2, \st \mcp_\Omega(\bm{\mcz})=\bm{\mcb},
\end{equation}
where $\hat{\mbfz}_{[j]}$ is the same as that in \eqref{eq:square}. From the figures, we see that TMac performed much better than all the other compared methods. Note that our figure (also in Figures \ref{fig:4way-rand} and \ref{fig:4way-plaw}) for SquareDeal looks different from that shown in \cite{mu2013square}, because we fixed the dimension of $\bm{\mcm}$ and varied the rank $r$ while \cite{mu2013square} fixes $\rank(\bm{\mcm})$ to an array of very small values and varies the dimension of $\bm{\mcm}$. The solver and the stopping tolerance also affect the results. For example, the ``square'' model \eqref{eq:sqm-lmafit} solved by LMaFit gives much better results but still worse than those given by TMac. %and again, TMac was better with rank-increasing strategy than with rank-decreasing strategy.

In addition, Figure \ref{fig:diff-mode} depicts the phase transition plots of TMac utilizing 1, 2, 3, and 4 modes of matricization on the 4-way dataset. We see that TMac can recover more tensors as it uses more modes.

\subsection{Uniformly random data} This section tests the recoverability of TMac, MatComp, FaLRTC, and SquareDeal on two datasets in which the tensor factors have uniformly random entries. In the first dataset, each tensor had the form $\bm{\mcm}=\bm{\mcc}\times_1\mbfa_1\times_2\mbfa_2\times_3\mbfa_3$, where $\bm{\mcc}$ was generated by MATLAB command \verb|rand(r,r,r)-0.5| and $\mbfa_n$ by \verb|rand(50,r)-0.5|. Each tensor in the second dataset had the form  $\bm{\mcm}=\bm{\mcc}\times_1\mbfa_1\times_2\mbfa_2\times_3\mbfa_3\times_4\mbfa_4$, where $\bm{\mcc}$ was generated by MATLAB command \verb|rand(r,r,r,r)-0.5| and $\mbfa_n$ by \verb|rand(20,r)-0.5|.
 Figure \ref{fig:3way-rand} shows the recoverability of each method on the first dataset and Figure \ref{fig:4way-rand} on the second dataset. We see that TMac with both rank-fixing and rank-increasing strategies performs significantly better than the other compared methods.

\subsection{Synthetic data with power-law decaying singular values} This section tests the recoverability of TMac, MatComp, FaLRTC, and SquareDeal on two more difficult synthetic datasets. In the first dataset, each tensor had the form $\bm{\mcm}=\bm{\mcc}\times_1\mbfa_1\times_2\mbfa_2\times_3\mbfa_3$, where $\bm{\mcc}$ was generated by MATLAB command \verb|rand(r,r,r)| and $\mbfa_n$ by \verb|orth(randn(50,r))*diag([1:r].^(-0.5))|. Note that the core tensor $\bm{\mcc}$ has nonzero-mean entries and each factor matrix has power-law decaying singular values. This kind of low-rank tensor appears more difficult to recover compared to the previous random low-rank tensors. Each tensor in the second dataset had the form $\bm{\mcm}=\bm{\mcc}\times_1\mbfa_1\times_2\mbfa_2\times_3\mbfa_3\times_4\mbfa_4$, where $\bm{\mcc}$ was generated by MATLAB command \verb|rand(r,r,r,r)| and $\mbfa_n$ by \verb|orth(randn(20,r))*diag([1:r].^(-0.5))|. For these two datasets, TMac with rank-decreasing strategy can never decrease $r_n$ to the true rank and thus performs badly.
 Figure \ref{fig:3way-plaw} shows the recoverability of each method on the first dataset and Figure \ref{fig:4way-plaw} on the second dataset. Again, we see that TMac with both rank-fixing and rank-increasing strategies performs significantly better than the other compared methods.

\section{Algorithm}\label{sec:alg}
We apply the alternating least squares method to \eqref{eq:main}. Since the model needs an estimate of $\rank(\bm{\mcm})$, we provide two strategies to dynamically adjust the rank estimates.

\subsection{Alternating minimization} The model \eqref{eq:main}  is convex with respect to each block of the variables $\mbfx,\mbfy$ and $\bm{\mcz}$ while the other two are fixed. Hence, we cyclically update $\mbfx,\mbfy$ and $\bm{\mcz}$ one at a time. %by minimizing the objective of \eqref{eq:main} while the other two are fixed at their latest values.
Let
\begin{equation}\label{eq:obj}
f(\mbfx,\mbfy,\bm{\mcz})=\sum_{n=1}^N\frac{\alpha_n}{2}\|\mbfx_n\mbfy_n-\mbfz_{(n)}\|_F^2
\end{equation}
be the objective of \eqref{eq:main}. We perform the updates as
\begin{subequations}\label{eq:update}
\begin{align}
\mbfx^{k+1}&=\argmin_\mbfx f(\mbfx,\mbfy^k,\bm{\mcz}^k),\label{update-x}\\
\mbfy^{k+1}&=\argmin_\mbfy f(\mbfx^{k+1},\mbfy,\bm{\mcz}^k),\label{update-y}\\
\bm{\mcz}^{k+1}&=\argmin_{\mcp_\Omega(\bm{\mcz})=\bm{\mcb}} f(\mbfx^{k+1},\mbfy^{k+1},\bm{\mcz}).\label{update-z}
\end{align}
\end{subequations}
Note that both \eqref{update-x} and \eqref{update-y} can be decomposed into $N$ independent least squares problems, which can be solved in parallel. The updates in \eqref{eq:update} can be explicitly written as
\begin{subequations}\label{eq:exupdate}
\begin{align}
\mbfx_n^{k+1}&=\mbfz_{(n)}^k(\mbfy_n^k)^\top\big(\mbfy_n^k(\mbfy_n^k)^\top\big)^\dagger,\ n=1,\ldots,N,\label{exupdate-x}\\
\mbfy_n^{k+1}&=\big((\mbfx_n^{k+1})^\top\mbfx_n^{k+1}\big)^\dagger(\mbfx_n^{k+1})^\top\mbfz_{(n)}^k,\ n=1,\ldots,N,\label{exupdate-y}\\
\bm{\mcz}^{k+1}&=\mcp_{\Omega^c}\left(\sum_{n=1}^N\fold_n(\mbfx_n^{k+1}\mbfy_n^{k+1})\right)+\bm{\mcb},\label{exupdate-z}
\end{align}
\end{subequations}
where $\mbfa^\dagger$ denotes the Moore-Penrose pseudo-inverse of $\mbfa$, $\Omega^c$ is the complement of $\Omega$, and we have used the fact that $\mcp_{\Omega^c}(\bm{\mcb})=\mathbf{0}$ in \eqref{exupdate-z}.

%For the purpose of recovering $\bm{\mcm}$, we only care about
No matter how $\mbfx_n$ is computed, only the products $\mbfx_n\mbfy_n, n=1,\ldots,N$, affect $\bm{\mcz}$ and thus the recovery $\bm{\mcm}$. Hence, we shall update $\mbfx$ in the following more efficient way
\begin{equation}\label{exupdate2-x}
\mbfx_n^{k+1}=\mbfz_{(n)}^k(\mbfy_n^k)^\top,\ n=1,\ldots,N,
\end{equation}
which together with \eqref{exupdate-y} gives the same products $\mbfx_n^{k+1}\mbfy_n^{k+1},\forall n$, as those by \eqref{exupdate-x} and \eqref{exupdate-y} according to the following lemma, which is similar to Lemma 2.1 in \cite{wen2012lmafit}. We give a proof here for completeness.

\begin{lemma}\label{lem:equiv}
For any two matrices $\mbfb,\mbfc$, it holds that
\begin{equation}\label{eq:equiv}
\begin{array}{ll}
&\big(\mbfc\mbfb^\top(\mbfb\mbfb^\top)^\dagger\big)\left(\big(\mbfc\mbfb^\top(\mbfb\mbfb^\top)^\dagger\big)^\top
\big(\mbfc\mbfb^\top(\mbfb\mbfb^\top)^\dagger\big)\right)^\dagger\big(\mbfc\mbfb^\top(\mbfb\mbfb^\top)^\dagger\big)^\top \mbfc\\
=&\big(\mbfc\mbfb^\top\big)\left(\big(\mbfc\mbfb^\top\big)^\top
\big(\mbfc\mbfb^\top\big)\right)^\dagger\big(\mbfc\mbfb^\top\big)^\top \mbfc.
\end{array}
\end{equation}
\end{lemma}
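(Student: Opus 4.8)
The plan is to reduce both sides of \eqref{eq:equiv} to the same expression by identifying what each side computes geometrically. Observe that the left-hand side has the form $\mathbf{P}_1 \mathbf{c}$ where $\mathbf{P}_1 = \mathbf{a}(\mathbf{a}^\top\mathbf{a})^\dagger \mathbf{a}^\top$ with $\mathbf{a} = \mathbf{c}\mathbf{b}^\top(\mathbf{b}\mathbf{b}^\top)^\dagger$, and the right-hand side is $\mathbf{P}_2 \mathbf{c}$ where $\mathbf{P}_2 = \tilde{\mathbf{a}}(\tilde{\mathbf{a}}^\top\tilde{\mathbf{a}})^\dagger \tilde{\mathbf{a}}^\top$ with $\tilde{\mathbf{a}} = \mathbf{c}\mathbf{b}^\top$. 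Each of $\mathbf{P}_1,\mathbf{P}_2$ is the orthogonal projector onto the column space of $\mathbf{a}$, resp. $\tilde{\mathbf{a}}$ (using the standard identity that $\mathbf{a}(\mathbf{a}^\top\mathbf{a})^\dagger\mathbf{a}^\top$ is the orthogonal projection onto $\mathrm{range}(\mathbf{a})$, valid for any matrix $\mathbf{a}$). So it suffices to prove $\mathrm{range}\big(\mathbf{c}\mathbf{b}^\top(\mathbf{b}\mathbf{b}^\top)^\dagger\big) = \mathrm{range}\big(\mathbf{c}\mathbf{b}^\top\big)$; then $\mathbf{P}_1 = \mathbf{P}_2$ and in particular $\mathbf{P}_1\mathbf{c} = \mathbf{P}_2\mathbf{c}$.

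To prove the range equality, first I would recall the basic pseudo-inverse facts $(\mathbf{b}\mathbf{b}^\top)^\dagger = \big((\mathbf{b}\mathbf{b}^\top)^\dagger\big)^\top$ (symmetry, since $\mathbf{b}\mathbf{b}^\top$ is symmetric) and, crucially, $\mathbf{b}^\top(\mathbf{b}\mathbf{b}^\top)^\dagger\mathbf{b}\mathbf{b}^\top = \mathbf{b}^\top$, equivalently $\mathbf{b}^\top(\mathbf{b}\mathbf{b}^\top)^\dagger(\mathbf{b}\mathbf{b}^\top)$ acts as the identity on $\mathrm{range}(\mathbf{b}^\top)$ — this follows because $(\mathbf{b}\mathbf{b}^\top)^\dagger(\mathbf{b}\mathbf{b}^\top)$ is the orthogonal projector onto $\mathrm{range}(\mathbf{b}\mathbf{b}^\top) = \mathrm{range}(\mathbf{b})$, and the columns of $\mathbf{b}^\top$... wait, one must be careful: $\mathbf{b}^\top$ has columns in the row space, so the cleaner statement is $\mathbf{b}\mathbf{b}^\top(\mathbf{b}\mathbf{b}^\top)^\dagger\mathbf{b} = \mathbf{b}$, i.e. the projector onto $\mathrm{range}(\mathbf{b})$ fixes $\mathbf{b}$. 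From this, the inclusion $\mathrm{range}(\mathbf{c}\mathbf{b}^\top) \subseteq \mathrm{range}(\mathbf{c}\mathbf{b}^\top(\mathbf{b}\mathbf{b}^\top)^\dagger)$ follows by writing $\mathbf{c}\mathbf{b}^\top = \mathbf{c}\mathbf{b}^\top(\mathbf{b}\mathbf{b}^\top)^\dagger(\mathbf{b}\mathbf{b}^\top) = \big(\mathbf{c}\mathbf{b}^\top(\mathbf{b}\mathbf{b}^\top)^\dagger\big)(\mathbf{b}\mathbf{b}^\top)$, after transposing the identity $\mathbf{b}\mathbf{b}^\top(\mathbf{b}\mathbf{b}^\top)^\dagger\mathbf{b} = \mathbf{b}$ appropriately and postmultiplying by $\mathbf{c}^\top$ then transposing again; the reverse inclusion $\mathrm{range}(\mathbf{c}\mathbf{b}^\top(\mathbf{b}\mathbf{b}^\top)^\dagger) \subseteq \mathrm{range}(\mathbf{c}\mathbf{b}^\top)$ is immediate since the former matrix is the latter times $(\mathbf{b}\mathbf{b}^\top)^\dagger$ on the right.

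An alternative, perhaps cleaner route I would keep in reserve: use a (compact or full) SVD $\mathbf{b} = \mathbf{U}\Sigma\mathbf{V}^\top$, so $\mathbf{b}^\top(\mathbf{b}\mathbf{b}^\top)^\dagger = \mathbf{V}\Sigma^\dagger\mathbf{U}^\top$ and $\mathbf{b}^\top = \mathbf{V}\Sigma\mathbf{U}^\top$; then both $\mathbf{c}\mathbf{b}^\top(\mathbf{b}\mathbf{b}^\top)^\dagger = \mathbf{c}\mathbf{V}\Sigma^\dagger\mathbf{U}^\top$ and $\mathbf{c}\mathbf{b}^\top = \mathbf{c}\mathbf{V}\Sigma\mathbf{U}^\top$ manifestly have column space equal to $\mathrm{range}(\mathbf{c}\mathbf{V}_r)$ where $\mathbf{V}_r$ collects the right singular vectors with nonzero singular values, because $\Sigma$ and $\Sigma^\dagger$ have the same support pattern and $\mathbf{U}$ is orthogonal. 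This makes the range equality transparent and then the projector-equality argument finishes the proof. The main obstacle — really the only subtlety — is handling the pseudo-inverses correctly in the rank-deficient case ($\mathbf{b}\mathbf{b}^\top$ singular, $\mathbf{c}\mathbf{b}^\top$ not full column rank); once one commits to the "both sides are the orthogonal projector onto the same subspace applied to $\mathbf{c}$" viewpoint, the algebra is routine, and the SVD substitution removes any remaining worry.
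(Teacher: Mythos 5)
Your proposal is correct, but it takes a genuinely different route from the paper's. The paper proves Lemma \ref{lem:equiv} by substituting the compact SVD $\mbfb=\mbfu\bm{\Sigma}\mbfv^\top$ into both sides of \eqref{eq:equiv} and reducing each to the common expression $\mbfc\mbfv\mbfv^\top(\mbfc^\top\mbfc)^\dagger\mbfv\mbfv^\top\mbfc^\top\mbfc$, invoking along the way the reverse-order law $(\mbfg\mbfh)^\dagger=\mbfh^\dagger\mbfg^\dagger$ (which does not hold for arbitrary factors and needs the orthonormality of $\mbfu$ and $\mbfv$ to be justified in the instances where it is used). You instead recognize each side as $\mbfa(\mbfa^\top\mbfa)^\dagger\mbfa^\top\mbfc$, i.e., the orthogonal projection of $\mbfc$ onto the column space of $\mbfa$, and reduce the lemma to the single range identity $\mcr_c\big(\mbfc\mbfb^\top\big)=\mcr_c\big(\mbfc\mbfb^\top(\mbfb\mbfb^\top)^\dagger\big)$ --- which is precisely statement \eqref{range-col} of the paper's later Lemma \ref{lem:range}, proved there independently for a different purpose. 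Both of your justifications of the range identity are sound: the algebraic one via $\mbfb\mbfb^\top(\mbfb\mbfb^\top)^\dagger\mbfb=\mbfb$ (your self-correction about column versus row spaces resolves itself correctly, since $\mbfc\mbfb^\top=\mbfc\mbfb^\top(\mbfb\mbfb^\top)^\dagger(\mbfb\mbfb^\top)$ follows by transposing and using that the projector onto $\mcr_c(\mbfb\mbfb^\top)=\mcr_c(\mbfb)$ fixes $\mbfb\mbfc^\top$), and the SVD one showing that $\mbfc\mbfv\bm{\Sigma}\mbfu^\top$ and $\mbfc\mbfv\bm{\Sigma}^{-1}\mbfu^\top$ both have column space $\mcr_c(\mbfc\mbfv)$. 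What your approach buys is conceptual economy and robustness: it avoids the reverse-order law entirely, handles the rank-deficient case transparently, and exposes the logical link between Lemma \ref{lem:equiv} and Lemma \ref{lem:range}. What it gives up is the explicit closed form of the common value of the two sides, which the paper's computation produces as a byproduct (though that closed form is not used elsewhere).
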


\begin{proof}
Let $\mbfb=\mbfu\bm{\Sigma}\mbfv^\top$ be the compact SVD of $\mbfb$, i.e., $\mbfu^\top\mbfu =\mbfi, \mbfv^\top\mbfv=\mbfi$, and $\bm{\Sigma}$ is a diagonal matrix with all positive singular values on its diagonal. It is not difficult to verify that
$\mbfc\mbfb^\top(\mbfb\mbfb^\top)^\dagger=\mbfc\mbfv\bm{\Sigma}^{-1}\mbfu^\top$. Then
\begin{align*}
&\ \text{first line of \eqref{eq:equiv}}\\
=&\ \mbfc\mbfv\bm{\Sigma}^{-1}\mbfu^\top\big(\mbfu\bm{\Sigma}^{-1}\mbfv^\top\mbfc^\top
\mbfc\mbfv\bm{\Sigma}^{-1}\mbfu^\top\big)^\dagger\mbfu\bm{\Sigma}^{-1}\mbfv^\top\mbfc^\top
\mbfc\\
=&\ \mbfc\mbfv\bm{\Sigma}^{-1}\mbfu^\top\mbfu\bm{\Sigma}\mbfv^\top\big(\mbfc^\top
\mbfc\big)^\dagger\mbfv\bm{\Sigma}\mbfu^\top\mbfu\bm{\Sigma}^{-1}\mbfv^\top\mbfc^\top
\mbfc\\
=&\ \mbfc\mbfv\mbfv^\top\big(\mbfc^\top\mbfc\big)^\dagger\mbfv\mbfv^\top\mbfc^\top\mbfc,
\end{align*}
where we have used $(\mbfg\mbfh)^\dagger=\mbfh^\dagger\mbfg^\dagger$ for any $\mbfg,\mbfh$ of appropriate sizes in the second equality. On the other hand,
\begin{align*}
&\ \text{second line of \eqref{eq:equiv}}\\
=&\ \mbfc\mbfv\bm{\Sigma}\mbfu^\top\big(\mbfu\bm{\Sigma}\mbfv^\top\mbfc^\top
\mbfc\mbfv\bm{\Sigma}\mbfu^\top\big)^\dagger\mbfu\bm{\Sigma}\mbfv^\top\mbfc^\top
\mbfc\\
=&\ \mbfc\mbfv\bm{\Sigma}\mbfu^\top\mbfu\bm{\Sigma}^{-1}\mbfv^\top\big(\mbfc^\top
\mbfc\big)^\dagger\mbfv\bm{\Sigma}^{-1}\mbfu^\top\mbfu\bm{\Sigma}\mbfv^\top\mbfc^\top
\mbfc\\
=&\ \mbfc\mbfv\mbfv^\top\big(\mbfc^\top\mbfc\big)^\dagger\mbfv\mbfv^\top\mbfc^\top\mbfc.
\end{align*}
Hence, we have the desired result \eqref{eq:equiv}.
\end{proof}

\subsection{Rank-adjusting schemes} The problem \eqref{eq:main} requires one to specify the ranks $r_1,\ldots,r_N$. If they are fixed, then a good estimate is important for \eqref{eq:main} to perform well. Too small $r_n$'s can cause underfitting and a large recovery error whereas too large $r_n$'s can cause overfitting and large deviation to the underlying tensor $\bm{\mcm}$. Since we do not assume the knowledge of $\rank(\bm{\mcm})$, we provide two schemes to dynamically adjust the rank estimates $r_1,\ldots,r_N$. In our algorithm, we use parameter $\xi_n$ to determine which one of the two scheme to apply. If $\xi_n=-1$, the rank-decreasing scheme is applied to $r_n$; if $\xi_n=1$,  the rank-increasing scheme is applied to $r_n$; otherwise, $r_n$ is fixed to its initial value.

\subsubsection{Rank-decreasing scheme}\label{sec:dec} This scheme starts from an input overestimated rank , i.e., $r_n>\rank_n(\bm{\mcm})$. Following \cite{wen2012lmafit, lai2013improved}, we calculate the eigenvalues of $\mbfx_n^\top\mbfx_n$ after each iteration, which are assumed to be ordered as $\lambda_1^n\ge\lambda_2^n\ge\ldots\ge\lambda_{r_n}^n$. Then we compute the quotients $\bar{\lambda}_i^n=\lambda_i^n/\lambda_{i+1}^n,i=1,\ldots,r_n-1$. Suppose
$$\hat{r}_n=\argmax_{1\le i\le r_n-1}\bar{\lambda}_i.$$
If
\begin{equation}\label{eq:dec-cond}\text{gap}_n=\frac{(r_n-1)\bar{\lambda}_{\hat{r}_n}}{\sum_{i\neq \hat{r}_n}\bar{\lambda}_i}\ge 10,
\end{equation}
 which means a ``big'' gap between $\lambda_{\hat{r}_n}$ and $\lambda_{\hat{r}_n+1}$, then we reduce $r_n$ to $\hat{r}_n$. Assume that the SVD of $\mbfx_n\mbfy_n$ is $\mbfu\bm{\Sigma}\mbfv^\top$. Then we update $\mbfx_n$ to $\mbfu_{\hat{r}_n}\bm{\Sigma}_{\hat{r}_n}$ and $\mbfy_n$ to $\mbfv_{\hat{r}_n}^\top$, where $\mbfu_{\hat{r}_n}$ is a submatrix of $\mbfu$ containing $\hat{r}_n$ columns corresponding to the  largest $\hat{r}_n$ singular values, and $\bm{\Sigma}_{\hat{r}_n}$ and $\mbfv_{\hat{r}_n}$ are obtained accordingly.

We observe in our numerical experiments that this rank-adjusting scheme generally works well for exactly low-rank tensors. Because, for these tensors, $\text{gap}_n$ can be very large and easy to identify, the true rank is typically obtained after just one rank adjustment. For approximately low-rank tensors, however, a large gap may or may not exist, and when it does not, the rank overestimates will not decrease. For these tensors, the rank-increasing scheme below works better.

\subsubsection{Rank-increasing scheme}\label{sec:inc} This scheme starts an underestimated rank, i.e., $r_n\le\rank_n(\bm{\mcm})$. Following \cite{wen2012lmafit}, we increase $r_n$ to $\min(r_n+\Delta r_n,r_n^{\max})$ at iteration $k+1$ if
\begin{equation}\label{eq:inc-cond}\left|1-\frac{\big\|\bm{\mcb}-\mcp_\Omega\big(\fold_n(\mbfx_n^{k+1}\mbfy_n^{k+1})\big)\big\|_F}{\big\|\bm{\mcb}-\mcp_\Omega\big(\fold_n(\mbfx_n^{k}\mbfy_n^{k})\big)\big\|_F}\right|\le 10^{-2},
\end{equation}
which means ``slow'' progress in the $r_n$ dimensional space along the $n$-th mode. Here, $\Delta r_n$ is a positive integer, and $r_n^{\max}$ is the maximal rank estimate. Let the economy QR factorization of $(\mbfy_n^{k+1})^\top$ be $\mbfq\mbfr$. We augment $\mbfq\gets [\mbfq,\hat{\mbfq}]$ where $\hat{\mbfq}$ has $\Delta r_n$ randomly generated columns and then orthonormalize $\mbfq$. Next, we update $\mbfy_n^{k+1}$ to $\mbfq^\top$ and $\mbfx_n^{k+1}\gets [\mbfx_n^{k+1},\mathbf{0}]$, where $\mathbf{0}$ is an $I_n\times \Delta r_n$ \emph{zero} matrix\footnote{Since we update the variables in the order of $\mbfx,\mbfy,\bm{\mcz}$, appending any matrix of appropriate size after $\mbfx_n$ does not make any difference.}.

Numerically, this scheme works well not only for exactly low-rank tensors but also for approximately low-rank ones. However, for exactly low-rank tensors, this scheme causes the algorithm to run longer than the rank-decreasing scheme. Figure \ref{fig:dec-inc} shows the performance of our algorithm equipped with the two rank-adjusting schemes. As in section \ref{sec:randtest}, we randomly generated $\bm{\mcm}$ of size $50\times50\times50$ with $\rank_n(\bm{\mcm})=r,\forall n$, and we started TMac with 25\% overestimated ranks for rank-decreasing scheme and 25\% underestimated ranks for rank-increasing scheme. From Figure \ref{fig:dec-inc}, we see that TMac with the rank-decreasing scheme is better when $r$ is small while TMac with the rank-increasing scheme becomes better when $r$ is large. %The reason is that when $r$ is small, the rank-decreasing scheme can adjust $r_n$ to $\rank_n(\bm{\mcm}),\forall n$, within a few iterations while it may take a hundred or more iterations as $r$ is large.
We can also see from the figure that after $r_n$ is adjusted to match $\rank_n(\bm{\mcm})$, our algorithm converges linearly.

In general, TMac with the rank-increasing scheme works no worse than it does with the rank-decreasing scheme in terms of solution quality no matter the underlying tensor is exactly low-rank or not. Numerically, we observed that if the rank is relatively low, TMac with the rank-decreasing scheme can adjust the estimated rank to the true one within just a few iterations and converges fast, while TMac with the rank-increasing scheme may need more time to get a comparable solution. However, if the underlying tensor is approximately low-rank, or it is exactly low-rank but the user concerns more on solution quality, TMac with the rank-increasing scheme is always preferred, and small $\Delta r_n$'s usually give better solutions at the cost of more running time.

\subsection{Pseudocode} The above discussions are distilled in Algorithm \ref{alg:als}. After the algorithm terminates with output $(\mbfx,\mbfy,\bm{\mcz})$, we use $\sum_{n=1}^N\alpha_n\fold_n(\mbfx_n\mbfy_n)$ to estimate the  tensor $\bm{\mcm}$, which is usually better than  $\bm{\mcz}$ when the underlying $\bm{\mcm}$ is only approximately low-rank or the observations are contaminated by noise, or both.
\begin{algorithm}\caption{Low-rank {\bf T}ensor Completion by Parallel {\bf M}atrix F{\bf ac}torization  (TMac)}\label{alg:als}
{\small
\DontPrintSemicolon
{\bf Input:} $\Omega$, $\bm{\mcb}=\mcp_\Omega(\bm{\mcm})$, and $\alpha_n\ge 0, n=1,\ldots, N$ with $\sum_{n=1}^N\alpha_n=1$.\;
{\bf Parameters:} $r_n, \Delta r_n, r_n^{\max}, \xi_n, n=1,\ldots,N$.\;
{\bf Initialization:} $(\mbfx^0,\mbfy^0,\bm{\mcz}^0)$ with $\mcp_\Omega(\bm{\mcz}^0)=\bm{\mcb}$.\;
\For{$k=0,1,\ldots$}{
$\mbfx^{k+1}\gets$ \eqref{exupdate2-x},
$\mbfy^{k+1}\gets$ \eqref{exupdate-y}, and
$\bm{\mcz}^{k+1}\gets$ \eqref{exupdate-z}.\;
\If{stopping criterion is satisfied}{
Output $(\mbfx^{k+1},\mbfy^{k+1},\bm{\mcz}^{k+1})$.
}
\For{$n=1,\ldots,N$}{
\If{$\xi_n=-1$}{
Apply rank-decreasing scheme to $\mbfx_n^{k+1}$ and $\mbfy_n^{k+1}$  in section \ref{sec:dec}.
}
\ElseIf{$\xi_n=1$}{
Apply rank-increasing scheme to $\mbfx_n^{k+1}$ and $\mbfy_n^{k+1}$  in section \ref{sec:inc}.
}
}
}
}
\end{algorithm}

\begin{remark}
In Algorithm \ref{alg:als}, we can have different rank-adjusting schemes, i.e., different $\xi_n$, for different modes. For simplicity, we  set  $\xi_n=-1$ or $\xi_n=1$ uniformly for all $n$ in our experiments.
\end{remark}

\section{Convergence analysis}\label{sec:convg}
Introducing Lagrangian multiplier $\bm{\mcw}$ for the constraint $\mcp_\Omega(\bm{\mcz})=\bm{\mcb}$, we write the Lagrangian function of \eqref{eq:main}
\begin{equation}\label{eq:lag}
L(\mbfx,\mbfy,\bm{\mcz},\bm{\mcw})=f(\mbfx,\mbfy,\bm{\mcz})-\langle\bm{\mcw},\mcp_\Omega(\bm{\mcz})-\bm{\mcb}\rangle.
\end{equation}
Letting $\bm{\mct}=(\mbfx,\mbfy,\bm{\mcz},\bm{\mcw})$ and $\nabla_{\bm{\mct}} L=0$, we have the KKT conditions
\begin{subequations}\label{eq:kkt}
\begin{align}
&(\mbfx_n\mbfy_n-\mbfz_{(n)})\mbfy_n^\top=0,\ n=1,\ldots,N,\label{kkt-x}\\
&\mbfx_n^{\top}(\mbfx_n\mbfy_n-\mbfz_{(n)})=0,\ n=1,\ldots,N,\label{kkt-y}\\
&\bm{\mcz}-\sum_{n=1}^N\alpha_n\cdot\fold_n(\mbfx_n\mbfy_n)-\mcp_\Omega(\bm{\mcw})=0,\label{kkt-z}\\
&\mcp_\Omega(\bm{\mcz})-\bm{\mcb}=0.\label{kkt-w}
\end{align}
\end{subequations}
Our main result is summarized in the following theorem.

\begin{theorem}\label{thm:main}
Suppose $\{(\mbfx^k,\mbfy^k,\bm{\mcz}^k)\}$ is a sequence generated by Algorithm \ref{alg:als} with fixed $r_n$'s and fixed positive $\alpha_n$'s. Let $\bm{\mcw}^k=\bm{\mcb}-\mcp_\Omega\big(\sum_n\alpha_n\cdot\fold_n(\mbfx_n^k\mbfy_n^k)\big)$. %If both $\{\mbfx^k\}$ and $\{\mbfy^k\}$ are bounded, 
Then any limit point of $\{\bm{\mct}^k\}$ satisfies the KKT conditions in \eqref{eq:kkt}.
%\begin{equation}\label{eq:convg}
%\lim_{k\to\infty}\nabla_{\bm{\mct}}L(\bm{\mct}^k)=0,
%\end{equation}
%namely, the KKT conditions in \eqref{eq:kkt} are satisfied in the limit.
\end{theorem}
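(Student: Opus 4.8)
The plan is to first establish that the objective values $f(\mbfx^k,\mbfy^k,\bm{\mcz}^k)$ are non-increasing, then to deduce that the per-mode products $\mbfx_n^k\mbfy_n^k$ and the tensor $\bm{\mcz}^k$ change by square-summable amounts, and finally to pass to the limit along any convergent subsequence of $\{\bm{\mct}^k\}$, verifying the four groups of KKT conditions \eqref{kkt-x}--\eqref{kkt-w} one at a time. For the monotonicity I would note that \eqref{exupdate-y} and \eqref{update-z} are exact block minimizations of $f$ and hence cannot increase it, while the cheap update \eqref{exupdate2-x}, though not itself a minimizer of $f(\cdot,\mbfy^k,\bm{\mcz}^k)$, produces — by Lemma~\ref{lem:equiv} applied with $\mbfc=\mbfz_{(n)}^k$ and $\mbfb=\mbfy_n^k$ — the same product $\mbfx_n^{k+1}\mbfy_n^{k+1}$ (after the subsequent $\mbfy_n$-minimization) as the genuine update \eqref{exupdate-x}; since the latter is an exact minimization over $\mbfx_n$ followed by an exact minimization over $\mbfy_n$, we still get $f(\mbfx^{k+1},\mbfy^{k+1},\bm{\mcz}^k)\le f(\mbfx^k,\mbfy^k,\bm{\mcz}^k)$, so $f(\mbfx^k,\mbfy^k,\bm{\mcz}^k)\downarrow f^\infty\ge0$. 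Along the way I would record, writing $P_n^k=\mbfx_n^k\mbfy_n^k$ and $\tilde P_n^k=\mbfz_{(n)}^k(\mbfy_n^k)^\top(\mbfy_n^k(\mbfy_n^k)^\top)^\dagger\mbfy_n^k$ for the intermediate product, that because the mode-$n$ subproblems decouple and each inner minimization is the orthogonal projection (in Frobenius norm) of $\mbfz_{(n)}^k$ onto the subspace $\{M:\ \mathrm{rowspace}(M)\subseteq\mathrm{rowspace}(\mbfy_n^k)\}$, resp.\ $\{M:\ \mathrm{colspace}(M)\subseteq\mathrm{colspace}(\mbfx_n^{k+1})\}$ — the first containing $P_n^k$, the second containing $\tilde P_n^k$ — Pythagoras gives, for each $n$ and $k$,
\[\|\mbfz_{(n)}^k-P_n^k\|_F^2-\|\mbfz_{(n)}^k-P_n^{k+1}\|_F^2=\|\tilde P_n^k-P_n^k\|_F^2+\|P_n^{k+1}-\tilde P_n^k\|_F^2,\]
together with the normal equations $(\tilde P_n^k-\mbfz_{(n)}^k)(\mbfy_n^k)^\top=0$ and $(\mbfx_n^{k+1})^\top(P_n^{k+1}-\mbfz_{(n)}^k)=0$.

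For the asymptotic regularity I would use that, since $\sum_n\alpha_n=1$ and $\|\mbfz_{(n)}\|_F=\|\bm{\mcz}\|_F$ for every $n$, the map $\bm{\mcz}\mapsto f(\mbfx^{k+1},\mbfy^{k+1},\bm{\mcz})$ is $1$-strongly convex on the affine set $\{\mcp_\Omega(\bm{\mcz})=\bm{\mcb}\}$; as $\bm{\mcz}^{k+1}$ is its minimizer, $f(\mbfx^{k+1},\mbfy^{k+1},\bm{\mcz}^k)-f(\mbfx^{k+1},\mbfy^{k+1},\bm{\mcz}^{k+1})\ge\frac12\|\bm{\mcz}^{k+1}-\bm{\mcz}^k\|_F^2$. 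Combining this with the displayed identity (summed over $n$ with weights $\frac{\alpha_n}{2}$) and summing over $k$, the left side telescopes into $f^0-f^\infty<\infty$; because all $\alpha_n>0$ this forces $\sum_k\|\bm{\mcz}^{k+1}-\bm{\mcz}^k\|_F^2<\infty$, $\sum_k\|\tilde P_n^k-P_n^k\|_F^2<\infty$ and $\sum_k\|P_n^{k+1}-P_n^k\|_F^2<\infty$ for each $n$, so in particular $\bm{\mcz}^{k+1}-\bm{\mcz}^k\to0$, $\tilde P_n^k-P_n^k\to0$, and $P_n^{k+1}-P_n^k\to0$.

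Finally, let $\bm{\mct}^{k_j}\to\bm{\mct}^*=(\mbfx^*,\mbfy^*,\bm{\mcz}^*,\bm{\mcw}^*)$. Condition \eqref{kkt-w} is immediate since $\mcp_\Omega(\bm{\mcz}^k)=\bm{\mcb}$ for every $k$. Condition \eqref{kkt-z} holds at $\bm{\mct}^*$ because, from the optimality of $\bm{\mcz}^k$ in \eqref{update-z} and the definition of $\bm{\mcw}^k$, it already holds exactly at every $\bm{\mct}^k$ with $k\ge1$, and all operations involved are continuous. For \eqref{kkt-y} I would start from the normal equations $(\mbfx_n^{k_j})^\top(\mbfx_n^{k_j}\mbfy_n^{k_j}-\mbfz_{(n)}^{k_j-1})=0$ of \eqref{exupdate-y}, write $\mbfz_{(n)}^{k_j-1}=\mbfz_{(n)}^{k_j}+(\mbfz_{(n)}^{k_j-1}-\mbfz_{(n)}^{k_j})$, and let $j\to\infty$; the correction term vanishes since $\mbfx_n^{k_j}$ is bounded (it converges) and $\bm{\mcz}^{k_j-1}-\bm{\mcz}^{k_j}\to0$, giving $(\mbfx_n^*)^\top(\mbfx_n^*\mbfy_n^*-\mbfz_{(n)}^*)=0$. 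For \eqref{kkt-x}, the identity $(\tilde P_n^{k_j}-\mbfz_{(n)}^{k_j})(\mbfy_n^{k_j})^\top=0$ passes to the limit $(\mbfx_n^*\mbfy_n^*-\mbfz_{(n)}^*)(\mbfy_n^*)^\top=0$, using that $\tilde P_n^{k_j}-P_n^{k_j}\to0$ and $P_n^{k_j}=\mbfx_n^{k_j}\mbfy_n^{k_j}\to\mbfx_n^*\mbfy_n^*$. Hence $\bm{\mct}^*$ satisfies all of \eqref{eq:kkt}.

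The hard part is \eqref{kkt-x}: because the algorithm uses the cheap update \eqref{exupdate2-x} rather than the true minimizer \eqref{exupdate-x}, the iterate pair $(\mbfx_n^{k+1},\mbfy_n^{k+1})$ does not itself satisfy $(\mbfx_n\mbfy_n-\mbfz_{(n)})\mbfy_n^\top=0$, so there is no ready-made stationarity equation to pass to the limit. My remedy is to carry the intermediate product $\tilde P_n^k$, which \emph{does} satisfy the corresponding normal equation relative to $\mbfy_n^k$, and then to extract $\tilde P_n^k-\mbfx_n^k\mbfy_n^k\to0$ from the Pythagorean decrease and the finiteness of the total objective decrease, so that $\tilde P_n^{k_j}$ and $\mbfx_n^{k_j}\mbfy_n^{k_j}$ have the same limit. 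A related nuisance is that $f$ is strongly convex neither in $\mbfx$ nor in $\mbfy$ alone (only in $\bm{\mcz}$ and in each product $\mbfx_n\mbfy_n$), so the individual factors $\mbfx_n^k,\mbfy_n^k$ need not be asymptotically regular — but the quantities that appear in \eqref{eq:kkt} and in the updates, namely $\mbfx_n^k\mbfy_n^k$ and $\bm{\mcz}^k$, are precisely the ones under control, so this causes no real obstruction.
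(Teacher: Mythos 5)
Your proposal is correct and follows essentially the same route as the paper: exact block minimization (with Lemma~\ref{lem:equiv} covering the cheap $\mbfx$-update) gives monotone decrease of $f$, hence square-summability of $\|\mbfx_n^{k+1}\mbfy_n^{k+1}-\mbfx_n^k\mbfy_n^k\|_F^2$ and $\|\bm{\mcz}^{k+1}-\bm{\mcz}^k\|_F^2$, and the KKT conditions are obtained by passing the normal equations to the limit along the convergent subsequence. The only differences are organizational — you split $\|\mbfx_n^{k+1}\mbfy_n^{k+1}-\mbfx_n^k\mbfy_n^k\|_F^2$ orthogonally around the intermediate product $\tilde P_n^k=\tilde{\mbfa}\mbfb$ where the paper uses the equivalent projector identity \eqref{eq4} of Corollary~\ref{cor:diff}, and you verify \eqref{kkt-y} from the exact normal equation of the $\mbfy$-update together with $\bm{\mcz}^{k-1}-\bm{\mcz}^{k}\to 0$ rather than from the second component of that decomposition — and both variants are sound.
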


Our proof of Theorem \ref{thm:main} mainly follows \cite{wen2012lmafit}. The literature has work that analyzes the convergence of alternating minimization method for non-convex problems such as \cite{GrippoSciandrone2000, Tseng-01, xu-yin-multiconvex, xu-yin-nonconvex}. However, to the best of our knowledge, none of them implies our convergence result.

 Before giving the proof, we establish some important lemmas that are used to establish our main result. We begin with the following lemma, which is similar to Lemma 3.1 of \cite{wen2012lmafit}.

\begin{lemma}\label{lem:diff1}
For any matrices $\mbfa,\mbfb$ and $\mbfc$ of appropriate sizes, letting
$\tilde{\mbfa}=\mbfc\mbfb^\top, \tilde{\mbfb}=(\tilde{\mbfa}^\top\tilde{\mbfa})^\dagger\tilde{\mbfa}^\top\mbfc,$
then we have
\begin{equation}\label{eq:diff2}
\|\mbfa\mbfb-\mbfc\|_F^2-\|\tilde{\mbfa}\tilde{\mbfb}-\mbfc\|_F^2=\|\tilde{\mbfa}\tilde{\mbfb}-\mbfa\mbfb\|_F^2.
\end{equation}
\end{lemma}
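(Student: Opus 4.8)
The plan is to expand both sides and reduce the identity to a statement about the projection associated with $\tilde{\mbfa}=\mbfc\mbfb^\top$. First I would recall the general fact: if $\tilde{\mbfb}=(\tilde{\mbfa}^\top\tilde{\mbfa})^\dagger\tilde{\mbfa}^\top\mbfc$, then $\tilde{\mbfa}\tilde{\mbfb}=\tilde{\mbfa}(\tilde{\mbfa}^\top\tilde{\mbfa})^\dagger\tilde{\mbfa}^\top\mbfc=\mbfp\mbfc$, where $\mbfp=\tilde{\mbfa}(\tilde{\mbfa}^\top\tilde{\mbfa})^\dagger\tilde{\mbfa}^\top$ is the orthogonal projector onto the column space of $\tilde{\mbfa}$. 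Hence $\mbfp$ is symmetric and idempotent, and $\mbfp\tilde{\mbfa}=\tilde{\mbfa}$. The key observation is that $\mbfa\mbfb$ lies in the column space of $\tilde{\mbfa}=\mbfc\mbfb^\top$, at least after accounting for the relevant subspace: more precisely, I claim $\mbfp(\mbfa\mbfb)=\mbfa\mbfb$. This is because every column of $\mbfa\mbfb$ is a linear combination of the columns of $\mbfa$ weighted by the rows of $\mbfb$; writing $\mbfa\mbfb = \mbfa\mbfb$, one checks that $\mbfc\mbfb^\top$ and $\mbfa\mbfb$ share the relevant column space — indeed the range of $\mbfa\mbfb$ is contained in the range of $\mbfa$, and one shows $\operatorname{range}(\mbfa\mbfb)\subseteq\operatorname{range}(\tilde{\mbfa})$ by a rank/SVD argument as in Lemma~\ref{lem:equiv}. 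Granting $\mbfp(\mbfa\mbfb)=\mbfa\mbfb$, the identity is immediate.

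Concretely, with $\mbfp$ symmetric idempotent and $\mbfp(\mbfa\mbfb)=\mbfa\mbfb$, I would write
\begin{align*}
\|\mbfa\mbfb-\mbfc\|_F^2-\|\tilde{\mbfa}\tilde{\mbfb}-\mbfc\|_F^2
&=\|\mbfa\mbfb-\mbfc\|_F^2-\|\mbfp\mbfc-\mbfc\|_F^2\\
&=\|\mbfa\mbfb\|_F^2-2\langle\mbfa\mbfb,\mbfc\rangle-\|\mbfp\mbfc\|_F^2+2\langle\mbfp\mbfc,\mbfc\rangle.
\end{align*}
Using $\langle\mbfa\mbfb,\mbfc\rangle=\langle\mbfp(\mbfa\mbfb),\mbfc\rangle=\langle\mbfa\mbfb,\mbfp\mbfc\rangle$ (by symmetry of $\mbfp$) and $\langle\mbfp\mbfc,\mbfc\rangle=\|\mbfp\mbfc\|_F^2$ (by idempotency), the right-hand side collapses to $\|\mbfa\mbfb\|_F^2-2\langle\mbfa\mbfb,\mbfp\mbfc\rangle+\|\mbfp\mbfc\|_F^2=\|\mbfa\mbfb-\mbfp\mbfc\|_F^2=\|\mbfa\mbfb-\tilde{\mbfa}\tilde{\mbfb}\|_F^2$, which is exactly \eqref{eq:diff2}.

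The main obstacle is establishing the claim $\mbfp(\mbfa\mbfb)=\mbfa\mbfb$, i.e.\ that $\operatorname{range}(\mbfa\mbfb)\subseteq\operatorname{range}(\mbfc\mbfb^\top)$. This is where care is needed: it is not true that $\operatorname{range}(\mbfa\mbfb)\subseteq\operatorname{range}(\mbfc\mbfb^\top)$ for arbitrary $\mbfa$, but it does hold for the structured quantities arising in the algorithm, and the clean way to see it is again via the compact SVD $\mbfb=\mbfu\bm{\Sigma}\mbfv^\top$ as in the proof of Lemma~\ref{lem:equiv}: then $\mbfc\mbfb^\top=\mbfc\mbfv\bm{\Sigma}\mbfu^\top$ has the same column space as $\mbfc\mbfv$, and $\mbfa\mbfb=\mbfa\mbfu\bm{\Sigma}\mbfv^\top$ has column space contained in $\operatorname{range}(\mbfa\mbfu)$ — so the inclusion we actually need is $\operatorname{range}(\mbfa\mbfu)\subseteq\operatorname{range}(\mbfc\mbfv)$, which I expect to follow because in the algorithm $\mbfa$ itself was produced as $\mbfc'\mbfb^\top$ for the previous iterate. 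Alternatively, and more robustly, one can avoid the claim entirely by directly computing $\tilde{\mbfa}\tilde{\mbfb}-\mbfa\mbfb$ and $\tilde{\mbfa}\tilde{\mbfb}-\mbfc$ in the SVD coordinates and expanding all three Frobenius norms by brute force; this is longer but mechanical. I would present the projector argument as the main line and fall back on the SVD computation if the range inclusion needs the structural hypothesis spelled out.
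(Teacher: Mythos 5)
Your final algebra is sound, but the central claim on which you hang it --- $\mathbf{P}(\mbfa\mbfb)=\mbfa\mbfb$, i.e.\ $\operatorname{range}(\mbfa\mbfb)\subseteq\operatorname{range}(\mbfc\mbfb^\top)$ --- is false in general, and no structural hypothesis is available to rescue it: the lemma is stated for \emph{arbitrary} $\mbfa,\mbfb,\mbfc$ and is invoked in the proof of Theorem~\ref{thm:main} with $\mbfa=\mbfx_n^k$, $\mbfb=\mbfy_n^k$, $\mbfc=\mbfz_{(n)}^k$, where $\mbfx_n^k=\mbfz_{(n)}^{k-1}(\mbfy_n^{k-1})^\top$ is built from the \emph{previous} iterate $\bm{\mcz}^{k-1}$, so its column space need not sit inside $\operatorname{range}(\mbfz_{(n)}^k(\mbfy_n^k)^\top)$ either. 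A minimal counterexample to the claim: $\mbfa=[0,1]^\top$, $\mbfb=[1]$, $\mbfc=[1,0]^\top$ gives $\tilde{\mbfa}=[1,0]^\top$ and $\mathbf{P}(\mbfa\mbfb)=0\neq\mbfa\mbfb$, while \eqref{eq:diff2} still holds (both sides equal $2$). So the range inclusion is not the mechanism behind the identity, and your fallback (``$\mbfa$ was produced as $\mbfc'\mbfb^\top$ for the previous iterate'') does not apply to the statement as it is actually used.

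The good news is that the only place the false claim enters is the justification of $\langle\mbfa\mbfb,\mbfc\rangle=\langle\mbfa\mbfb,\mathbf{P}\mbfc\rangle$, and that identity is true unconditionally for a different reason: since $\mathbf{P}$ is the orthogonal projector onto $\operatorname{range}(\tilde{\mbfa})$ with $\tilde{\mbfa}=\mbfc\mbfb^\top$, one has $(\mbfi-\mathbf{P})\mbfc\mbfb^\top=(\mbfi-\mathbf{P})\tilde{\mbfa}=0$, hence
\[
\langle\mbfa\mbfb,(\mbfi-\mathbf{P})\mbfc\rangle=\trace\big(\mbfb^\top\mbfa^\top(\mbfi-\mathbf{P})\mbfc\big)=\trace\big(\mbfa^\top(\mbfi-\mathbf{P})\mbfc\mbfb^\top\big)=0.
\]
The point is that $\mbfb$ can be moved across the inner product onto $\mbfc$; you do not need $\mbfa\mbfb$ itself to lie in $\operatorname{range}(\tilde{\mbfa})$. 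With this one-line replacement your projector argument closes, and it is in fact shorter than the paper's proof, which instead invokes Lemma~\ref{lem:equiv} to pass to $\tilde{\mbfa}=\mbfc\mbfb^\top(\mbfb\mbfb^\top)^\dagger$ and decomposes $\tilde{\mbfa}\tilde{\mbfb}-\mbfa\mbfb$ into two mutually orthogonal pieces via the compact SVDs of $\mbfb$ and $\tilde{\mbfa}$. As written, however, the proposal rests on a false step, and the advertised fallbacks (a structural hypothesis on $\mbfa$, or an uncarried-out brute-force SVD expansion) do not close the gap.
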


\begin{proof}
From Lemma \ref{lem:equiv}, it suffices to prove \eqref{eq:diff2} by letting $\tilde{\mbfa}=\mbfc\mbfb^\top(\mbfb\mbfb^\top)^\dagger,\tilde{\mbfb}=(\tilde{\mbfa}^\top\tilde{\mbfa})^\dagger\tilde{\mbfa}^\top\mbfc$. Assume the compact SVDs of $\tilde{\mbfa}$ and $\mbfb$ are $\tilde{\mbfa}=\mbfu_{\tilde{a}}\bm{\Sigma}_{\tilde{a}}\mbfv_{\tilde{a}}^\top$ and $\mbfb=\mbfu_b\bm{\Sigma}_b\mbfv_b^\top$. Noting $\mbfb=\mbfb\mbfv_b\mbfv_b^\top$ and $\mbfb^\top(\mbfb\mbfb^\top)^\dagger\mbfb=\mbfv_b\mbfv_b^\top$, we have
\begin{equation}\label{eq1}
\tilde{\mbfa}\mbfb-\mbfa\mbfb=(\mbfc-\mbfa\mbfb)\mbfv_b\mbfv_b^\top.
\end{equation}
Similarly, noting $\tilde{\mbfa}=\mbfu_{\tilde{a}}\mbfu_{\tilde{a}}^\top\tilde{\mbfa}$ and $\tilde{\mbfa}(\tilde{\mbfa}^\top\tilde{\mbfa})^\dagger\tilde{\mbfa}^\top=\mbfu_{\tilde{a}}\mbfu_{\tilde{a}}^\top$, we have
\begin{align}
\tilde{\mbfa}\tilde{\mbfb}-\tilde{\mbfa}\mbfb
=&\tilde{\mbfa}\tilde{\mbfb}-\mbfu_{\tilde{a}}\mbfu_{\tilde{a}}^\top\tilde{\mbfa}\mbfb\nonumber\\
=&\tilde{\mbfa}\tilde{\mbfb}-\mbfu_{\tilde{a}}\mbfu_{\tilde{a}}^\top
(\tilde{\mbfa}\mbfb-\mbfa\mbfb+\mbfa\mbfb)\nonumber\\
=& \mbfu_{\tilde{a}}\mbfu_{\tilde{a}}^\top\mbfc-\mbfu_{\tilde{a}}\mbfu_{\tilde{a}}^\top
\big[(\mbfc-\mbfa\mbfb)\mbfv_b\mbfv_b^\top-\mbfa\mbfb\big]\nonumber\\
=&\mbfu_{\tilde{a}}\mbfu_{\tilde{a}}^\top(\mbfc-\mbfa\mbfb)(\mbfi-\mbfv_b\mbfv_b^\top)\label{eq2}
\end{align}
where the third equality is from \eqref{eq1}. Summing \eqref{eq1} and \eqref{eq2} gives
\begin{equation}\label{eq3}
\tilde{\mbfa}\tilde{\mbfb}-\mbfa\mbfb=(\mbfi-\mbfu_{\tilde{a}}\mbfu_{\tilde{a}}^\top)(\mbfc-\mbfa\mbfb)
\mbfv_b\mbfv_b^\top+\mbfu_{\tilde{a}}\mbfu_{\tilde{a}}^\top(\mbfc-\mbfa\mbfb).
\end{equation}
Since $(\mbfi-\mbfu_{\tilde{a}}\mbfu_{\tilde{a}}^\top)(\mbfc-\mbfa\mbfb)$ is orthogonal to $\mbfu_{\tilde{a}}\mbfu_{\tilde{a}}^\top(\mbfc-\mbfa\mbfb)$, we have
\begin{equation}\label{eq4}
\big\|\tilde{\mbfa}\tilde{\mbfb}-\mbfa\mbfb\big\|_F^2=\big\|(\mbfi-\mbfu_{\tilde{a}}\mbfu_{\tilde{a}}^\top)(\mbfc-\mbfa\mbfb)
\mbfv_b\mbfv_b^\top\big\|_F^2+\big\|\mbfu_{\tilde{a}}\mbfu_{\tilde{a}}^\top(\mbfc-\mbfa\mbfb)\big\|_F^2.
\end{equation}
In addition, note
$$\langle(\mbfi-\mbfu_{\tilde{a}}\mbfu_{\tilde{a}}^\top)(\mbfc-\mbfa\mbfb)
\mbfv_b\mbfv_b^\top,\mbfc-\mbfa\mbfb\rangle=\|(\mbfi-\mbfu_{\tilde{a}}\mbfu_{\tilde{a}}^\top)(\mbfc-\mbfa\mbfb)
\mbfv_b\mbfv_b^\top\|_F^2,$$
and
$$\langle\mbfu_{\tilde{a}}\mbfu_{\tilde{a}}^\top(\mbfc-\mbfa\mbfb),\mbfc-\mbfa\mbfb\rangle=\|\mbfu_{\tilde{a}}\mbfu_{\tilde{a}}^\top(\mbfc-\mbfa\mbfb)\|_F^2.$$
Hence,
$$\langle\tilde{\mbfa}\tilde{\mbfb}-\mbfa\mbfb,\mbfc-\mbfa\mbfb\rangle=\big\|(\mbfi-\mbfu_{\tilde{a}}\mbfu_{\tilde{a}}^\top)(\mbfc-\mbfa\mbfb)
\mbfv_b\mbfv_b^\top\big\|_F^2+\big\|\mbfu_{\tilde{a}}\mbfu_{\tilde{a}}^\top(\mbfc-\mbfa\mbfb)\big\|_F^2,$$
and thus
$\|\tilde{\mbfa}\tilde{\mbfb}-\mbfa\mbfb\|_F^2=\langle\tilde{\mbfa}\tilde{\mbfb}-\mbfa\mbfb, \mbfc-\mbfa\mbfb\rangle.$ Then \eqref{eq:diff2} can be shown by noting
$$\|\tilde{\mbfa}\tilde{\mbfb}-\mbfc\|_F^2=\|\tilde{\mbfa}\tilde{\mbfb}-\mbfa\mbfb\|_F^2+2\langle\tilde{\mbfa}\tilde{\mbfb}-\mbfa\mbfb,\mbfa\mbfb-\mbfc\rangle+\|\mbfa\mbfb-\mbfc\|_F^2.$$
This completes the proof.
\end{proof}

We also need the following lemma.
\begin{lemma}\label{lem:range}
For any two matrices $\mbfb$ and $\mbfc$ of appropriate sizes, it holds that
\begin{subequations}
\begin{align}
&\mcr_c\left(\mbfc\mbfb^\top\right)=\mcr_c\left(\mbfc\mbfb^\top(\mbfb\mbfb^\top)^\dagger\right),\label{range-col}\\
&\mcr_r\left(\big[(\mbfc\mbfb^\top)^\top
(\mbfc\mbfb^\top)\big]^\dagger
(\mbfc\mbfb^\top)^\top\mbfc\right)\cr
=&\ \mcr_r\left(\big[(\mbfc\mbfb^\top(\mbfb\mbfb^\top)^\dagger)^\top
(\mbfc\mbfb^\top(\mbfb\mbfb^\top)^\dagger)\big]^\dagger
\big[\mbfc\mbfb^\top(\mbfb\mbfb^\top)^\dagger\big]^\top\mbfc\right),\label{range-row}
\end{align}
\end{subequations}
where $\mcr_c(\mbfa)$ and $\mcr_r(\mbfa)$ denote the column and row space of $\mbfa$, respectively.
\end{lemma}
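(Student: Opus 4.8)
The plan is to prove both identities by reducing everything to projection matrices via the compact SVDs of the relevant matrices, exactly as in the proof of Lemma~\ref{lem:equiv}. Write $\mbfb=\mbfu_b\bm{\Sigma}_b\mbfv_b^\top$ for the compact SVD of $\mbfb$. The first step is to record the key computation already used above: $\mbfc\mbfb^\top(\mbfb\mbfb^\top)^\dagger=\mbfc\mbfv_b\bm{\Sigma}_b^{-1}\mbfu_b^\top$, while $\mbfc\mbfb^\top=\mbfc\mbfv_b\bm{\Sigma}_b\mbfu_b^\top$. Both of these matrices are of the form $\mbfc\mbfv_b\mbfd\mbfu_b^\top$ with $\mbfd$ an invertible diagonal matrix, so they differ only by an invertible linear transformation applied on the right (namely $\mbfu_b\bm{\Sigma}_b^{-2}\mbfu_b^\top$ relates one to the other up to the column-space-preserving factor). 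This is the structural observation that drives both parts.

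For \eqref{range-col}: multiplying a matrix on the right by an invertible matrix does not change its column space, so it suffices to observe $\mbfc\mbfb^\top(\mbfb\mbfb^\top)^\dagger = \big(\mbfc\mbfb^\top\big)\big(\mbfu_b\bm{\Sigma}_b^{-2}\mbfu_b^\top + (\mbfi-\mbfu_b\mbfu_b^\top)\big)$ — or more simply, note directly that $\mcr_c(\mbfc\mbfv_b\bm{\Sigma}_b\mbfu_b^\top)=\mcr_c(\mbfc\mbfv_b)=\mcr_c(\mbfc\mbfv_b\bm{\Sigma}_b^{-1}\mbfu_b^\top)$ since $\bm{\Sigma}_b$ and $\mbfu_b^\top$ restricted to the appropriate subspace are injective. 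So both column spaces equal $\mcr_c(\mbfc\mbfv_b)$, giving the equality.

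For \eqref{range-row}: the expression $\big[(\mbfc\mbfb^\top)^\top(\mbfc\mbfb^\top)\big]^\dagger(\mbfc\mbfb^\top)^\top\mbfc$ is the least-squares solution operator, i.e.\ it is $\tilde{\mbfa}^\dagger\mbfc$ where $\tilde{\mbfa}=\mbfc\mbfb^\top$, because for any $\mbfa$ one has $(\mbfa^\top\mbfa)^\dagger\mbfa^\top=\mbfa^\dagger$. So the left side is $(\mbfc\mbfb^\top)^\dagger\mbfc$ and the right side is $\big(\mbfc\mbfb^\top(\mbfb\mbfb^\top)^\dagger\big)^\dagger\mbfc$. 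Using the SVD forms, $(\mbfc\mbfv_b\bm{\Sigma}_b\mbfu_b^\top)^\dagger = \mbfu_b\bm{\Sigma}_b^{-1}(\mbfc\mbfv_b)^\dagger$ and $(\mbfc\mbfv_b\bm{\Sigma}_b^{-1}\mbfu_b^\top)^\dagger = \mbfu_b\bm{\Sigma}_b(\mbfc\mbfv_b)^\dagger$ (here I use $(\mbfg\mbfh)^\dagger=\mbfh^\dagger\mbfg^\dagger$, valid because $\mbfu_b$ has orthonormal columns and $\bm{\Sigma}_b$ is invertible, so the needed compatibility conditions for the reverse-order law hold). Hence the row space of the left side is $\mcr_r\big((\mbfc\mbfv_b)^\dagger\mbfc\big)$ and likewise for the right side — the invertible diagonal $\bm{\Sigma}_b^{\pm1}$ and the isometry $\mbfu_b$ in front do not alter the row space. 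Thus both row spaces coincide.

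The main obstacle, and the only place requiring care, is justifying the pseudo-inverse manipulations: the reverse-order law $(\mbfg\mbfh)^\dagger=\mbfh^\dagger\mbfg^\dagger$ fails for general $\mbfg,\mbfh$, so I must invoke it only in the safe cases (one factor has orthonormal columns, or one factor is invertible), which is exactly the situation here since all the extra factors appearing are $\mbfu_b$, $\bm{\Sigma}_b$, or $\bm{\Sigma}_b^{-1}$. I would state this compatibility explicitly once and then apply it. Everything else is bookkeeping: identifying each displayed operator as a pseudo-inverse-times-$\mbfc$, and noting that column space is invariant under right multiplication by invertible maps while row space is invariant under left multiplication by injective maps.
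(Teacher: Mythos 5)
Your treatment of \eqref{range-col} is correct and is essentially the paper's argument in cleaner language: the paper chases vectors ($\mbfu^\top\bfy=\bm{\Sigma}^2\mbfu^\top\bfx$) to show mutual inclusion of the column spaces, while you observe that both matrices equal $\mbfc\mbfv_b$ times a surjection onto $\mbr^{s}$, so both column spaces are $\mcr_c(\mbfc\mbfv_b)$. Your reduction of \eqref{range-row} via the identity $(\mbfa^\top\mbfa)^\dagger\mbfa^\top=\mbfa^\dagger$ is also a genuinely nicer starting point than the paper's, which instead grinds out both sides into the form $\mbfu\bm{\Sigma}^{\pm 1}\mbfv^\top(\mbfc^\top\mbfc)^\dagger\mbfv\mbfv^\top\mbfc^\top\mbfc$ and strips the injective left factor.

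However, the step you flag as ``the only place requiring care'' is exactly where the argument breaks. You claim $(\mbfc\mbfv_b\bm{\Sigma}_b\mbfu_b^\top)^\dagger=\mbfu_b\bm{\Sigma}_b^{-1}(\mbfc\mbfv_b)^\dagger$ on the grounds that $\bm{\Sigma}_b$ is invertible, but invertibility of one factor is \emph{not} a safe case for the reverse-order law: peeling off $\mbfu_b^\top$ (orthonormal rows) is legitimate, but $(\mbfg\bm{\Sigma})^\dagger=\bm{\Sigma}^{-1}\mbfg^\dagger$ fails whenever $\mbfg=\mbfc\mbfv_b$ is column-rank deficient, which the lemma's hypotheses do not exclude (take $\mbfc\mbfv_b=[1\ 1]$ and $\bm{\Sigma}_b=\diag(1,2)$: then $([1\ 2])^\dagger=\tfrac{1}{5}[1\ 2]^\top$ while $\bm{\Sigma}_b^{-1}([1\ 1])^\dagger=[\tfrac12\ \tfrac14]^\top$). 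So the two matrices whose row spaces you compare at the end are not the two sides of \eqref{range-row}. The conclusion is still true and the repair is short: since $\mcr_r(\mbfa^\dagger)=\mcr_c(\mbfa)$, one has $\mcr_r(\mbfa^\dagger\mbfc)=\mbfc^\top\big(\mcr_c(\mbfa)\big)$, i.e.\ the row space of $\mbfa^\dagger\mbfc$ depends on $\mbfa$ only through its column space; applying this with $\mbfa=\mbfc\mbfb^\top$ and $\mbfa=\mbfc\mbfb^\top(\mbfb\mbfb^\top)^\dagger$ reduces \eqref{range-row} to the already-proved \eqref{range-col}, with no reverse-order manipulation at all. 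With that substitution your proof is complete and arguably tighter than the paper's.
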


\begin{proof}
Following the proof of Lemma \ref{lem:equiv}, we assume the compact SVD of $\mbfb$ to be $\mbfb=\mbfu\bm{\Sigma}\mbfv^\top$. Then $$\mbfc\mbfb^\top=\mbfc\mbfv\bm{\Sigma}\mbfu^\top\text{ and } \mbfc\mbfb^\top(\mbfb\mbfb^\top)^\dagger=\mbfc\mbfv\bm{\Sigma}^{-1}\mbfu^\top.$$ For any vector $\bfx$ of appropriate size, there must be another vector $\bfy$ such that $\mbfu^\top\bfy=\bm{\Sigma}^2\mbfu^\top\bfx$ or equivalently $\bm{\Sigma}^{-1}\mbfu^\top\bfy=\bm{\Sigma}\mbfu^\top\bfx$. Hence $\mbfc\mbfb^\top\bfx=\mbfc\mbfb^\top(\mbfb\mbfb^\top)^\dagger\bfy$, which indicates $\mcr_c\big(\mbfc\mbfb^\top\big)\subset\mcr_c\big(\mbfc\mbfb^\top(\mbfb\mbfb^\top)^\dagger\big)$.  In the same way, one can show the reverse inclusion, and hence $\mcr_c\big(\mbfc\mbfb^\top\big)=\mcr_c\big(\mbfc\mbfb^\top(\mbfb\mbfb^\top)^\dagger\big)$.
The result \eqref{range-row} can be shown similarly by noting that
{\small\begin{align*}
\big[(\mbfc\mbfb^\top)^\top
(\mbfc\mbfb^\top)\big]^\dagger
(\mbfc\mbfb^\top)^\top\mbfc&=\mbfu\bm{\Sigma}\mbfv^\top(\mbfc^\top\mbfc)^\dagger\mbfv
\mbfv^\top\mbfc^\top\mbfc,\\
\big[(\mbfc\mbfb^\top(\mbfb\mbfb^\top)^\dagger)^\top
(\mbfc\mbfb^\top(\mbfb\mbfb^\top)^\dagger)\big]^\dagger
\big[\mbfc\mbfb^\top(\mbfb\mbfb^\top)^\dagger\big]^\top\mbfc&=\mbfu\bm{\Sigma}^{-1}\mbfv^\top(\mbfc^\top\mbfc)^\dagger\mbfv
\mbfv^\top\mbfc^\top\mbfc.
\end{align*}}
This completes the proof.
\end{proof}

According to Lemma \ref{lem:range}, it is not difficult to get the following corollary.
\begin{corollary}\label{cor:diff}
For any matrices $\mbfa,\mbfb$ and $\mbfc$ of appropriate sizes, let
$\tilde{\mbfa}=\mbfc\mbfb^\top, \tilde{\mbfb}=(\tilde{\mbfa}^\top\tilde{\mbfa})^\dagger\tilde{\mbfa}^\top\mbfc.$ If the compact SVDs of $\tilde{\mbfa}$ and $\mbfb$ are $\tilde{\mbfa}=\mbfu_{\tilde{a}}\bm{\Sigma}_{\tilde{a}}\mbfv_{\tilde{a}}^\top$ and $\mbfb=\mbfu_b\bm{\Sigma}_b\mbfv_b^\top$, then we have \eqref{eq4}.
\end{corollary}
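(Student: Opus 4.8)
The plan is to obtain the corollary not by re-running the computation inside Lemma~\ref{lem:diff1}, but by observing that both sides of \eqref{eq4} are \emph{invariant} under replacing the auxiliary matrix $\tilde{\mbfa}=\mbfc\mbfb^\top(\mbfb\mbfb^\top)^\dagger$ used there with the simpler $\tilde{\mbfa}=\mbfc\mbfb^\top$ used here. Write $\tilde{\mbfa}_1=\mbfc\mbfb^\top(\mbfb\mbfb^\top)^\dagger$ and $\tilde{\mbfa}_2=\mbfc\mbfb^\top$, and set $\tilde{\mbfb}_i=(\tilde{\mbfa}_i^\top\tilde{\mbfa}_i)^\dagger\tilde{\mbfa}_i^\top\mbfc$. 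Lemma~\ref{lem:diff1}, through its proof, establishes \eqref{eq4} for the pair $(\tilde{\mbfa}_1,\tilde{\mbfb}_1)$; I would show the identity transfers verbatim to $(\tilde{\mbfa}_2,\tilde{\mbfb}_2)$, which is exactly what the corollary asserts.

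First I would treat the left-hand side. Since $\tilde{\mbfa}_i\tilde{\mbfb}_i=\tilde{\mbfa}_i(\tilde{\mbfa}_i^\top\tilde{\mbfa}_i)^\dagger\tilde{\mbfa}_i^\top\mbfc$, the two products $\tilde{\mbfa}_1\tilde{\mbfb}_1$ and $\tilde{\mbfa}_2\tilde{\mbfb}_2$ are precisely the first and second lines of \eqref{eq:equiv}, so Lemma~\ref{lem:equiv} gives $\tilde{\mbfa}_1\tilde{\mbfb}_1=\tilde{\mbfa}_2\tilde{\mbfb}_2$. Hence the quantity $\|\tilde{\mbfa}\tilde{\mbfb}-\mbfa\mbfb\|_F^2$ on the left of \eqref{eq4} is the same for both choices. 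Next I would treat the right-hand side, whose only dependence on $\tilde{\mbfa}$ is through the orthogonal projector $\mbfu_{\tilde{a}}\mbfu_{\tilde{a}}^\top$ onto $\mcr_c(\tilde{\mbfa})$, the remaining factors $\mbfv_b\mbfv_b^\top$, $\mbfc$, and $\mbfa\mbfb$ being independent of this choice. Part \eqref{range-col} of Lemma~\ref{lem:range} states $\mcr_c(\tilde{\mbfa}_2)=\mcr_c(\tilde{\mbfa}_1)$, so the left singular vectors of $\tilde{\mbfa}_1$ and $\tilde{\mbfa}_2$ span the same subspace and therefore $\mbfu_{\tilde{a}_1}\mbfu_{\tilde{a}_1}^\top=\mbfu_{\tilde{a}_2}\mbfu_{\tilde{a}_2}^\top$. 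Consequently the right-hand side of \eqref{eq4} is unchanged as well. Combining the two invariances with the validity of \eqref{eq4} for $(\tilde{\mbfa}_1,\tilde{\mbfb}_1)$ yields \eqref{eq4} for $(\tilde{\mbfa}_2,\tilde{\mbfb}_2)$, completing the argument.

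The step I expect to require the most care is recognizing that one cannot simply copy the intermediate computations of Lemma~\ref{lem:diff1}. In particular the relation \eqref{eq1}, namely $\tilde{\mbfa}\mbfb-\mbfa\mbfb=(\mbfc-\mbfa\mbfb)\mbfv_b\mbfv_b^\top$, rests on $\tilde{\mbfa}_1\mbfb=\mbfc\mbfv_b\mbfv_b^\top$, which holds for $\tilde{\mbfa}_1$ but \emph{fails} for $\tilde{\mbfa}_2=\mbfc\mbfb^\top$ since $\tilde{\mbfa}_2\mbfb=\mbfc\mbfb^\top\mbfb$ in general. For this reason the proof must treat the final identity \eqref{eq4} as a black box supplied by Lemma~\ref{lem:diff1} and establish only its invariance under the change of auxiliary matrix, rather than re-deriving it term by term. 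I note also that the row-space statement \eqref{range-row} of Lemma~\ref{lem:range} is not actually needed for \eqref{eq4} itself, because $\tilde{\mbfb}$ enters \eqref{eq4} only through the product $\tilde{\mbfa}\tilde{\mbfb}$, whose invariance is already delivered by Lemma~\ref{lem:equiv}; it is the column-space identity \eqref{range-col} that does the essential work here.
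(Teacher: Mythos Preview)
Your proposal is correct and is essentially the approach the paper has in mind: the paper states only that the corollary follows ``according to Lemma~\ref{lem:range},'' and your argument spells this out precisely by showing that both sides of \eqref{eq4} are invariant when $\tilde{\mbfa}=\mbfc\mbfb^\top(\mbfb\mbfb^\top)^\dagger$ is replaced by $\tilde{\mbfa}=\mbfc\mbfb^\top$, using Lemma~\ref{lem:equiv} for the left side and \eqref{range-col} for the right. Your remark that \eqref{range-row} is not actually needed here is also correct.
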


We are now ready to prove Theorem \ref{thm:main}.

\subsection*{Proof of Theorem \ref{thm:main}}
Let $\bar{\bm{\mct}}=(\bar{\mbfx},\bar{\mbfy},\bar{\bm{\mcz}},\bar{\bm{\mcw}})$ be a limit point of $\{\bm{\mct}^k\}$, and thus there is a subsequence $\{\bm{\mct}^k\}_{k\in\mck}$ converging to $\bar{\bm{\mct}}$.
According to \eqref{update-z}, we have $\mcp_\Omega(\bm{\mcz}^k)=\bm{\mcb}$, and
$\mcp_{\Omega^c}(\bm{\mcz}^k)=\mcp_{\Omega^c}(\sum_n\alpha_n\cdot\fold_n(\mbfx_n^k\mbfy_n^k))$. Hence, \eqref{kkt-z} and \eqref{kkt-w} hold at $\bm{\mct}=\bm{\mct}^k$ for all $k$ and thus at $\bar{\bm{\mct}}$.

From Lemma \ref{lem:diff1}, it follows that
\begin{equation}\label{eq5}f(\mbfx^k,\mbfy^k,\bm{\mcz}^k)-f(\mbfx^{k+1},\mbfy^{k+1},\bm{\mcz}^k)=\sum_{n=1}^N\frac{\alpha_n}{2}\|\mbfx^k\mbfy^k-\mbfx^{k+1}\mbfy^{k+1}\|_F^2.
\end{equation}
In addition, it is not difficult to verify
\begin{equation}\label{eq6}
f(\mbfx^{k+1},\mbfy^{k+1},\bm{\mcz}^k)-f(\mbfx^{k+1},\mbfy^{k+1},\bm{\mcz}^{k+1})=\frac{1}{2}\|\bm{\mcz}^k-\bm{\mcz}^{k+1}\|_F^2.
\end{equation}
Summing up \eqref{eq5} and \eqref{eq6} and observing that $f$ is lower bounded by \emph{zero}, we have
$$\sum_{k=0}^\infty\left(\sum_{n=1}^N\frac{\alpha_n}{2}\|\mbfx^k\mbfy^k-\mbfx^{k+1}\mbfy^{k+1}\|_F^2+\frac{1}{2}\|\bm{\mcz}^k-\bm{\mcz}^{k+1}\|_F^2\right)<\infty,$$
and thus
\begin{equation}\label{eq7}
\lim_{k\to\infty}\|\mbfx^k_n\mbfy^k_n-\mbfx_n^{k+1}\mbfy_n^{k+1}\|_F^2= 0,\text{ and }
\lim_{k\to\infty}\|\bm{\mcz}^k-\bm{\mcz}^{k+1}\|_F^2=0.
\end{equation}

For each $n$ and $k$, let the compact SVDs of $\mbfx_n^k$ and $\mbfy_n^k$ be $\mbfx_n^k=\mbfu_{x_n^k}\bm{\Sigma}_{x_n^k}\mbfv_{x_n^k}^\top$ and $\mbfy_n^k=\mbfu_{y_n^k}\bm{\Sigma}_{y_n^k}\mbfv_{y_n^k}^\top$. Letting $\mbfa=\mbfx_n^k,\mbfb=\mbfy_n^k,\tilde{\mbfa}=\mbfx_n^{k+1},\tilde{\mbfb}=\mbfy_n^{k+1}$, and $\mbfc=\mbfz_{(n)}^k$ in \eqref{eq4}, we have
\begin{align*}&\ \big\|\mbfx_n^{k+1}\mbfy_n^{k+1}-\mbfx_n^k\mbfy_n^k\big\|_F^2\\
=&\ \big\|(\mbfi-\mbfu_{x_n^{k+1}}\mbfu_{x_n^{k+1}}^\top)(\mbfz_{(n)}^k-\mbfx_n^k\mbfy_n^k)
\mbfv_{y_n^k}\mbfv_{y_n^k}^\top\big\|_F^2+\big\|\mbfu_{x_n^{k+1}}\mbfu_{x_n^{k+1}}^\top(\mbfz_{(n)}^k-\mbfx_n^k\mbfy_n^k)\big\|_F^2,
\end{align*}
which together with \eqref{eq7} gives
\begin{subequations}
\begin{align}
&\lim_{k\to \infty}\big(\mbfz_{(n)}^k-\mbfx_n^k\mbfy_n^k\big)
\mbfv_{y_n^k}\mbfv_{y_n^k}^\top=0,\label{lim-x}\\
&\lim_{k\to\infty} \mbfu_{x_n^{k+1}}\mbfu_{x_n^{k+1}}^\top\big(\mbfz_{(n)}^k-\mbfx_n^k\mbfy_n^k\big)=0.\label{lim-y}
\end{align}
\end{subequations}
Since $(\mbfy_n^k)^\top=\mbfv_{y_n^k}\mbfv_{y_n^k}^\top(\mbfy_n^k)^\top$ and $\{\mbfy^k_n\}_{k\in\mck}$ is bounded, then right multiplying $(\mbfy_n^k)^\top$ for $k\in\mck$ on both sides of \eqref{lim-x} yields
$$\big(\bar{\mbfz}_{(n)}-\bar{\mbfx}_n\bar{\mbfy}_n\big)\bar{\mbfy}_n^\top=\lim_{\substack{k\to \infty\\
k\in\mck}}\big(\mbfz_{(n)}^k-\mbfx_n^k\mbfy_n^k\big)(\mbfy_n^k)^\top
=0,$$
which indicates that \eqref{kkt-x} is satisfied at $\bar{\bm{\mct}}$. From \eqref{eq7} and \eqref{lim-y}, we have
$$\lim_{k\to\infty} \mbfu_{x_n^{k}}\mbfu_{x_n^{k}}^\top\big(\mbfz_{(n)}^{k}-\mbfx_n^{k}\mbfy_n^{k}\big)=0,$$
which together with the boundedness of $\{\mbfx_n^k\}_{k\in\mck}$ and $\mbfx_n^k=\mbfu_{x_n^{k}}\mbfu_{x_n^{k}}^\top\mbfx_n^k$ gives
$$\bar{\mbfx}_n^\top\big(\bar{\mbfz}_{(n)}-\bar{\mbfx}_n\bar{\mbfy}_n\big)=
\lim_{\substack{k\to\infty\\k\in\mck}} (\mbfx_n^{k})^\top\big(\mbfz_{(n)}^{k}-\mbfx_n^{k}\mbfy_n^{k}\big)=0,$$
and thus \eqref{kkt-y} is satisfied at $\bar{\bm{\mct}}$. This completes the proof.

\section{Numerical experiments}\label{sec:numerical}
This section tests Algorithm \ref{alg:als},  TMac, for solving \eqref{eq:main}. To demonstrate its effectiveness, we compared it with MatComp and FaLRTC (see section \ref{sec:phase}) on real-world data.

\subsection{Dynamic weights and stopping rules}
The parameters $\alpha_1,\ldots,\alpha_N$ in \eqref{eq:main} were uniformly set to $\frac{1}{N}$ at the beginning of TMac. During the iterations, we either fixed them or dynamically updated them according to the fitting error $$\fit_n(\mbfx_n\mbfy_n)=\|\mcp_\Omega\big(\fold_n(\mbfx_n\mbfy_n)-\bm{\mcb}\big)\|_F.$$ The smaller $\fit_n(\mbfx_n\mbfy_n)$ is, the larger $\alpha_n$ should be. Specifically, if the current iterate is $(\mbfx^k,\mbfy^k,\bm{\mcz}^k)$, we set
\begin{equation}\label{update-a}
\alpha_n^k=\frac{\big[\fit_n(\mbfx_n^k\mbfy_n^k)\big]^{-1}}{\sum_{i=1}^N\big[\fit_i(\mbfx_i^k\mbfy_i^k)\big]^{-1}},\ n=1,\ldots,N.
\end{equation}
As demonstrated below, dynamic updating  $\alpha_n$'s can  improve the recovery quality for tensors that have better low-rankness in one mode than others.
TMac was terminated if one of the following conditions was satisfied for some $k$
\begin{align}
\frac{\left|\sum_{n=1}^N\fit_n(\mbfx_n^{k}\mbfy_n^k)-\sum_{n=1}^N\fit_n(\mbfx_n^{k+1}\mbfy_n^{k+1})\right|}{1+\sum_{n=1}^N\fit_n(\mbfx_n^{k}\mbfy_n^{k})}\le tol,\label{cond1}\\
\frac{\sum_{n=1}^N\alpha_n^k\cdot\fit_n(\mbfx_n^{k+1}\mbfy_n^{k+1})}{\|\bm{\mcb}\|_F}\le tol,\label{cond2}
\end{align}
where $tol$ is a small positive value specified below. The condition \eqref{cond1} checks the relative change of the overall fitting, and \eqref{cond2} is satisfied if the weighted fitting is good enough.

\subsection{MRI data}
This section compares TMac, MatComp, and FaLRTC on a $181\times 217\times 181$ brain MRI data, which has been used in \cite{liu2013tensor}. The data is approximately low-rank: for its three mode unfodings, the numbers of singular values larger than 0.1\% of the largest one are 28, 33, and 29, respectively. One slice of the data is shown in Figure \ref{fig:mri}. We tested all three methods on both noiseless and noisy data\footnote{For noisy case, it could be better to relax the equality constraints in \eqref{eq:main}, \eqref{eq:lmafit}, and \eqref{eq:conv} to include some information on the noise level. However, we did not assume such information, and these methods could still work reasonably.}. Specifically, we added scaled Gaussian noise to the original data to have $$\bm{\mcm}^{nois}=\bm{\mcm}+\sigma\frac{\|\bm{\mcm}\|_\infty}{\|\bm{\mcn}\|_\infty}\bm{\mcn},$$ and made noisy observations $\bm{\mcb}=\mcp_\Omega(\bm{\mcm}^{nois})$, where $\|\bm{\mcm}\|_\infty$ denotes the maximum absolute value of $\bm{\mcm}$, and the entries of $\bm{\mcn}$ follow idendically independent standard Gaussian distribution.

We ran all the algorithms to maximum 1000 iterations. The stopping tolerance was set to $tol=10^{-3}$ for TMac and LMaFit. For FaLRTC, $tol=10^{-4}$ was set since we found $10^{-3}$ was too loose. Both TMac and LMaFit used the rank-increasing strategy. For TMac, we initialized $r_n=5$ and set $\Delta r_n=3, r_n^{\max}=50, \forall n$, and for LMaFit, we set initial rank $K=5$, increment $\kappa=3$, and maximal rank $K^{\max}=50$. We tested TMac with fixed parameters $\alpha_n=\frac{1}{3},n=1,2,3,$ and also dynamically updated ones by \eqref{update-a} starting from $\alpha_n^0=\frac{1}{3},n=1,2,3$. The smoothing parameter for FaLRTC was set to its default value $\mu=0.5$ and weight parameters set to $\alpha_n=\frac{1}{3}, n=1,2,3.$
Table \ref{table:mri} shows the average relative errors and running times of five independent trials for each setting of $\sigma$ and SR. Figure \ref{fig:mri} shows one noisy masked slice and the corresponding recovered slices by different methods with the setting of $\sigma=0.05$ and $\text{SR}=10\%$. From the results, we see that TMac consistently reached lower relative errors than those by FaLRTC and cost less time. MatComp used the least time and could achieve low relative error as SR is large. However, for low SR's (e.g., SR=10\%), it performed extremely bad, and even we ran it to more iterations, say 5000, it still performed much worse than TMac and FaLRTC. In addition, TMac using fixed $\alpha_n$'s worked similarly well as that using dynamically updated ones, which should be because the data has similar low-rankness along each mode.

\subsection{Hyperspectral data}
This section compares TMac, MatComp, and FaLRTC on a $205\times246\times96$ hyperspectral data, one slice of which is shown in Figure \ref{fig:hyp}. This data is also approximately low-rank: for its three mode unfoldings, the numbers of singular values larger than 1\% of the largest one are 19,19, and 4, respectively. However, its low-rank property is not as good as that of the above MRI data. Its numbers of singular values larger than 0.1\% of the largest one are 198, 210, and 18. Hence, its mode-3 unfolding has better low-rankness, and we assigned larger weight to the third mode. For FaLRTC, we set $\alpha_1=\alpha_2=0.25,\alpha_3=0.5$. For TMac, we tested it with fixed weights $\alpha_1=\alpha_2=0.25,\alpha_3=0.5$ and also with dynamically updated ones by \eqref{update-a} starting from $\alpha_1^0=\alpha_2^0=0.25,\alpha_3^0=0.5$. All other parameters of the three methods were set as the same as those used in the previous test.

For each setting of $\sigma$ and SR, we made 5 independent runs. Table \ref{table:hyp} reports the average results of each tested method, and Figure \ref{fig:hyp} shows one noisy slice with 90\% missing values and 5\% Gaussian noise, and the corresponding recovered slices by the compared methods. From the results, we see again that TMac outperformed FaLRTC in both solution quality and running time, and MatComp gave the largest relative errors at the cost of least time. In addition, TMac with dynamically updated $\alpha_n$'s worked better than that with fixed $\alpha_n$'s as SR was relatively large (e.g., SR=30\%, 50\%), and this should be because the data has much better low-rankness along the third mode than the other two. As SR was low (e.g., SR=10\%), TMac with dynamically updated $\alpha_n$'s performed even worse. This could be explained by the case that all slices might have missing values at common locations (i.e., some mode-3 fibers were entirely missing) as SR was low, and in this case,  the third mode unfolding had some entire columns missing. In general, it is impossible for any matrix completion solver to recover an entire missing column or row of a matrix, and thus putting more weight on the third mode could worsen the recovery. That also explains why MatComp gave much larger relative errors than those by FaLRTC but the slice recovered by MatComp looks better than that by FaLRTC in Figure \ref{fig:hyp}. Note that there are lots of black points on the slice given by MatComp, and these black points correspond to missing columns of the third mode unfolding. Therefore, we do not recommend to dynamically update $\alpha_n$'s in TMac when SR is low or some fibers are entirely missing.

\subsection{Video inpainting}
In this section, we compared TMac, MatComp, and FaLRTC on both grayscale and color videos. The grayscale video\footnote{http://www.ugcs.caltech.edu/$\sim$srbecker/escalator\_data.mat} has 200 frames with each one of size $130\times 160$, and the color video\footnote{http://media.xiph.org/video/derf/ The original video has 500 frames, and we used its first 150 frames in our test.} has 150 frames with each one of size $144\times 176$. We treated the grayscale video as a $130\times 160\times 200$ tensor and the color video as three $144\times176\times150$ tensors\footnote{One can also treat the color video as a 4th-order tensor. However, we found that recovering a 4th-order tensor cost much more time than recovering three 3rd-order tensors and made no quality improvement.}, one for each channel. For the grayscale video, the numbers of singular values larger than 1\% of the largest one for each mode unfolding are 79, 84, and 35. Hence, its rank is not low, and it is relatively difficult to recover this video. The color video has lower rank. For each of its three channel tensors, the numbers of singular values larger than 1\% of the largest one are about\footnote{More precisely, the numbers are $(51,53,24)$, $(48,51,24)$, and $(49,52,24)$ respectively for three channel tensors.} 50, 50, and 24. During each run, the three channel tensors had the same index set of observed entries, which is the case in practice, and we recovered each channel independently. We set $\alpha_n=\frac{1}{3},n=1,2,3$ for FaLRTC and TMac, while the latter was tested with both fixed $\alpha_n$'s and dynamically updated one by \eqref{update-a}. All the other parameters of the test methods were set as the same as those in the previous test. The average results of 5 independent runs were reported in Table \ref{table:grayvideo} for the grayscale video and Table \ref{table:colorvideo} for the color video. Figure \ref{fig:grayvideo} shows one frame of recovered grayscale video by each method and Figure \ref{fig:colorvideo} one frame of recovered color video. From the tables, we see that the comparisons are similar to those for the previous hyperspectral data recovery.

\section{Discussions}\label{sec:conclusion}
We have proposed a new method for low-rank tensor completion. Our model utilizes low-rank matrix factorizations to all-mode unfoldings of the tensor. Synthetic data tests demonstrate that our model can recover significantly more low-rank tensors than two nuclear norm based models and one model that performs low-rank matrix factorization to only one mode unfolding. In addition, numerical results on 3D images and videos show that our method consistently produces the best solutions among all compared methods and outperforms the nuclear norm minimization method in both solution quality and running time.

Numerically, we have observed that our algorithm converges fast (e.g., linear convergence in Figure \ref{fig:dec-inc}). Papers \cite{Ling-Xu-Yin-mtx-conf-11, wen2012lmafit} demonstrate that the SOR technique can significantly accelerate the progress of alternating least squares. However, we did not observe any acceleration applying the same technique to our algorithm. In the future, we will explore the reason and try to develop other techniques to accelerate our algorithm. We also plan to incorporate the objective term in \eqref{eq:sqm-lmafit} to enrich \eqref{eq:main}, if the underlying low-rank tensor has more than three orders. %This is motivated by \cite{oymak2012simultaneously}, which proves that if the solution has multiple structures, utilizing more structures in certain non-convex model could dramatically improve the recoverability.

\section{Figures and tables}\label{sec:fig-table} 
We give all figures and tables of our numerical results in this section.
\newpage
 
\begin{figure}[H]
\captionsetup{width=0.99\textwidth}
\caption{Phase transition plots for different methods on \textbf{3-way low-rank tensors whose factors have Gaussian random entries}. (a) FaLRTC: the tensor completion method in \cite{liu2013tensor} that solves \eqref{eq:conv}. (b) MatComp: the matrix completion solver LMaFit in \cite{wen2012lmafit} that solves \eqref{eq:lmafit} with $\mbfz$ corresponding to $\mbfm_{(N)}$. (c) TMac-fix: Algorithm \ref{alg:als} solves \eqref{eq:main} with $\alpha_n=\frac{1}{3}$ and $r_n$ fixed to $r$, $\forall n$. (d) TMac-inc: Algorithm \ref{alg:als} solves \eqref{eq:main} with $\alpha_n=\frac{1}{3},\forall n$ and using rank-increasing strategy starting from $r_n=\text{round}(0.75r),\forall n$. (e) TMac-dec: Algorithm \ref{alg:als} solves \eqref{eq:main} with $\alpha_n=\frac{1}{3},\forall n$ and using rank-decreasing strategy starting from $r_n=\text{round}(1.25r),\forall n$.}\label{fig:rate-3G}
\centering
\subfigure[FaLRTC]{
\begin{minipage}[t]{0.25\textwidth}
\centering
\includegraphics[width=0.99\textwidth]{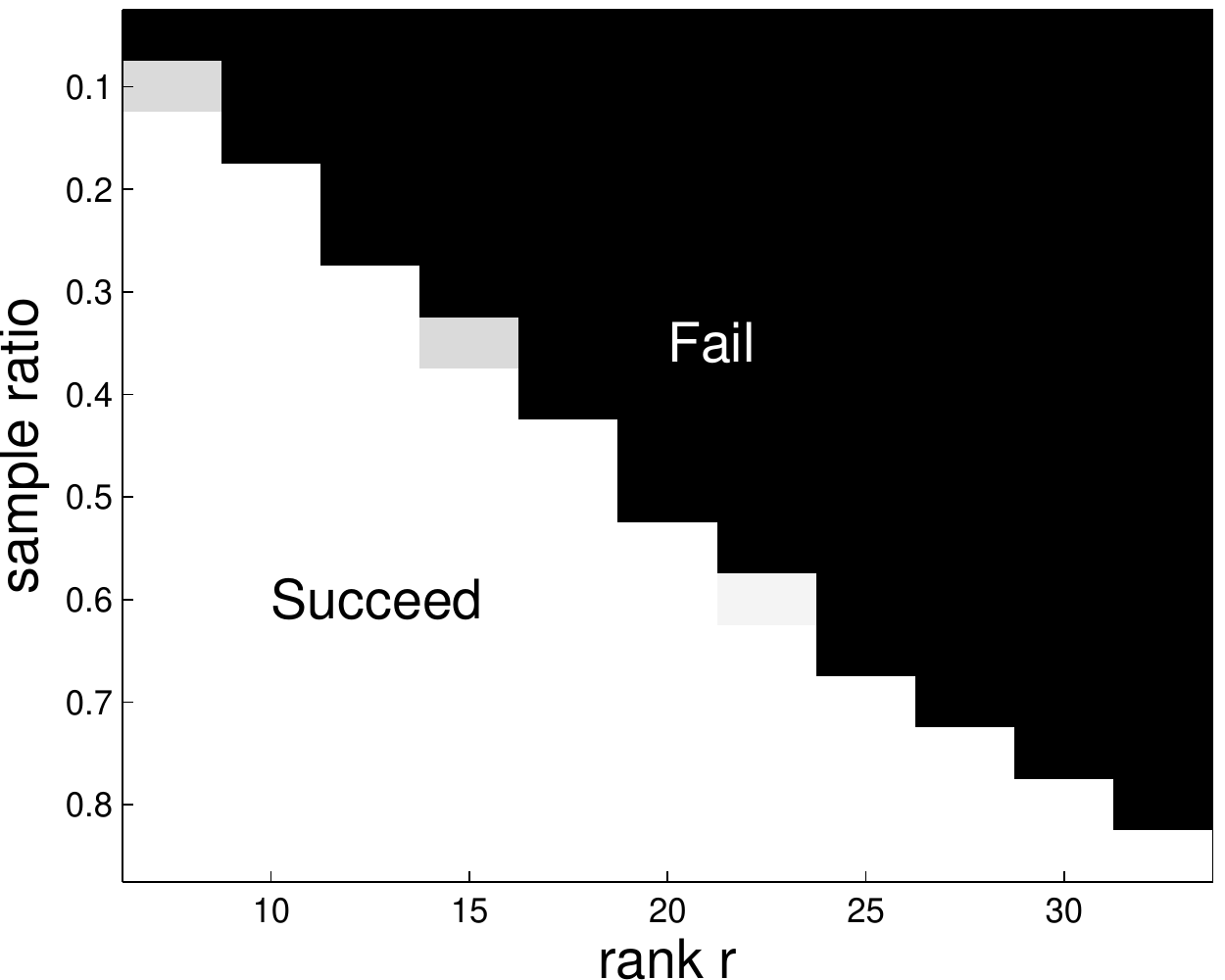}
\end{minipage}
}
\subfigure[MatComp]{
\begin{minipage}[t]{0.25\textwidth}
\centering
\includegraphics[width=0.99\textwidth]{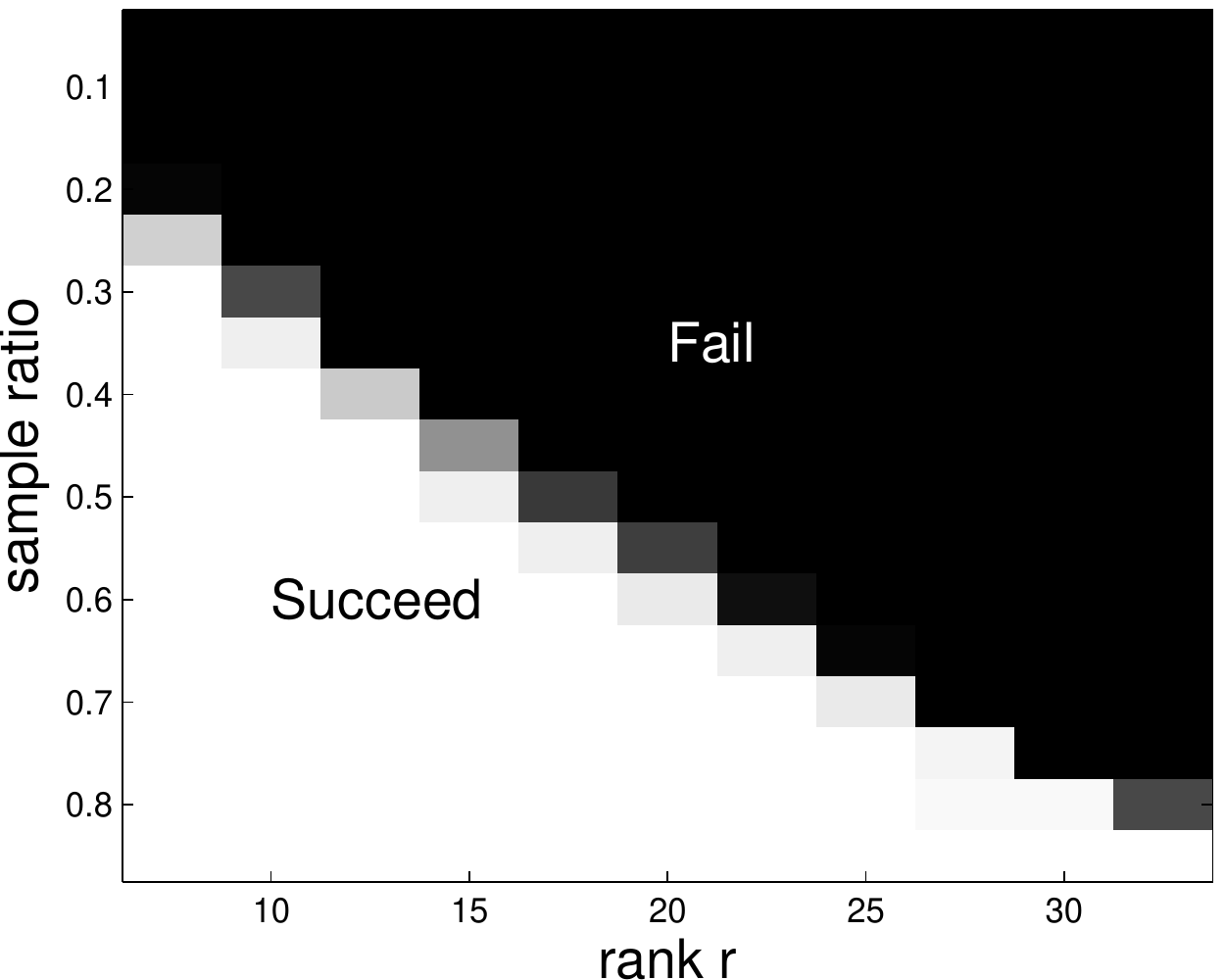}
\end{minipage}
}\\
\subfigure[TMac-fix]{
\begin{minipage}[t]{0.25\textwidth}
\centering
\includegraphics[width=0.99\textwidth]{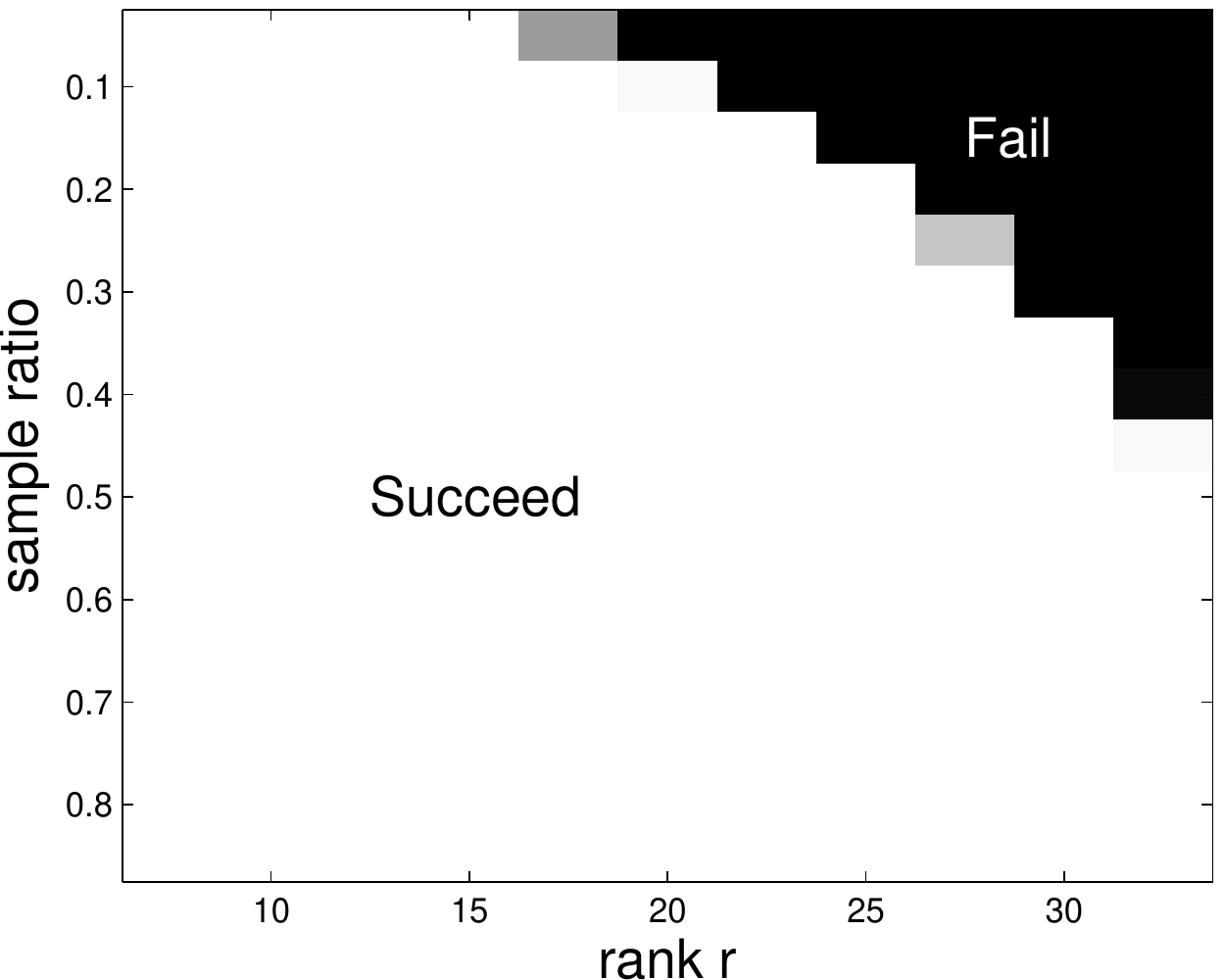}
\end{minipage}
}
\subfigure[TMac-inc]{
\begin{minipage}[t]{0.25\textwidth}
\centering
\includegraphics[width=0.99\textwidth]{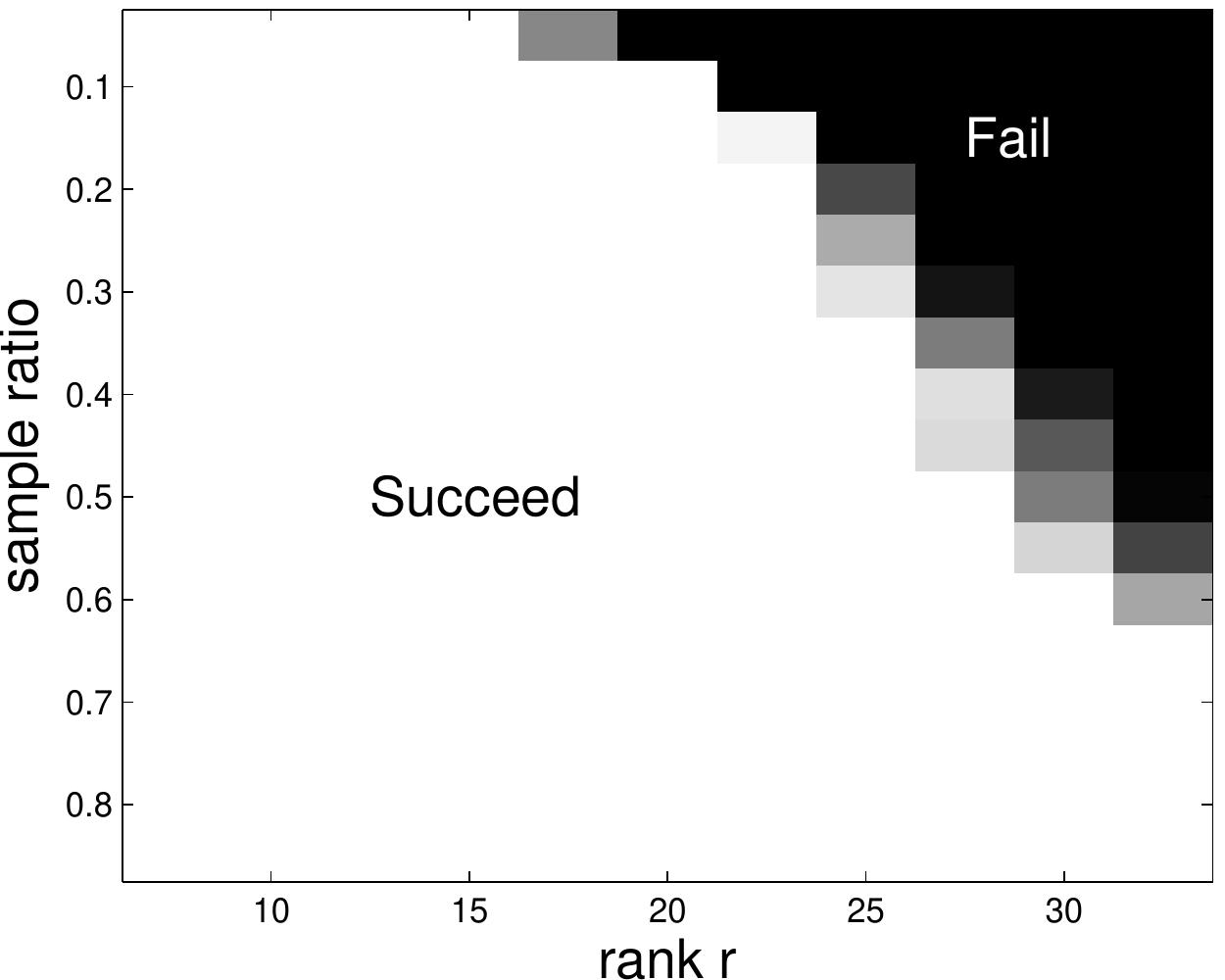}
\end{minipage}
}
\subfigure[TMac-dec]{
\begin{minipage}[t]{0.25\textwidth}
\centering
\includegraphics[width=0.99\textwidth]{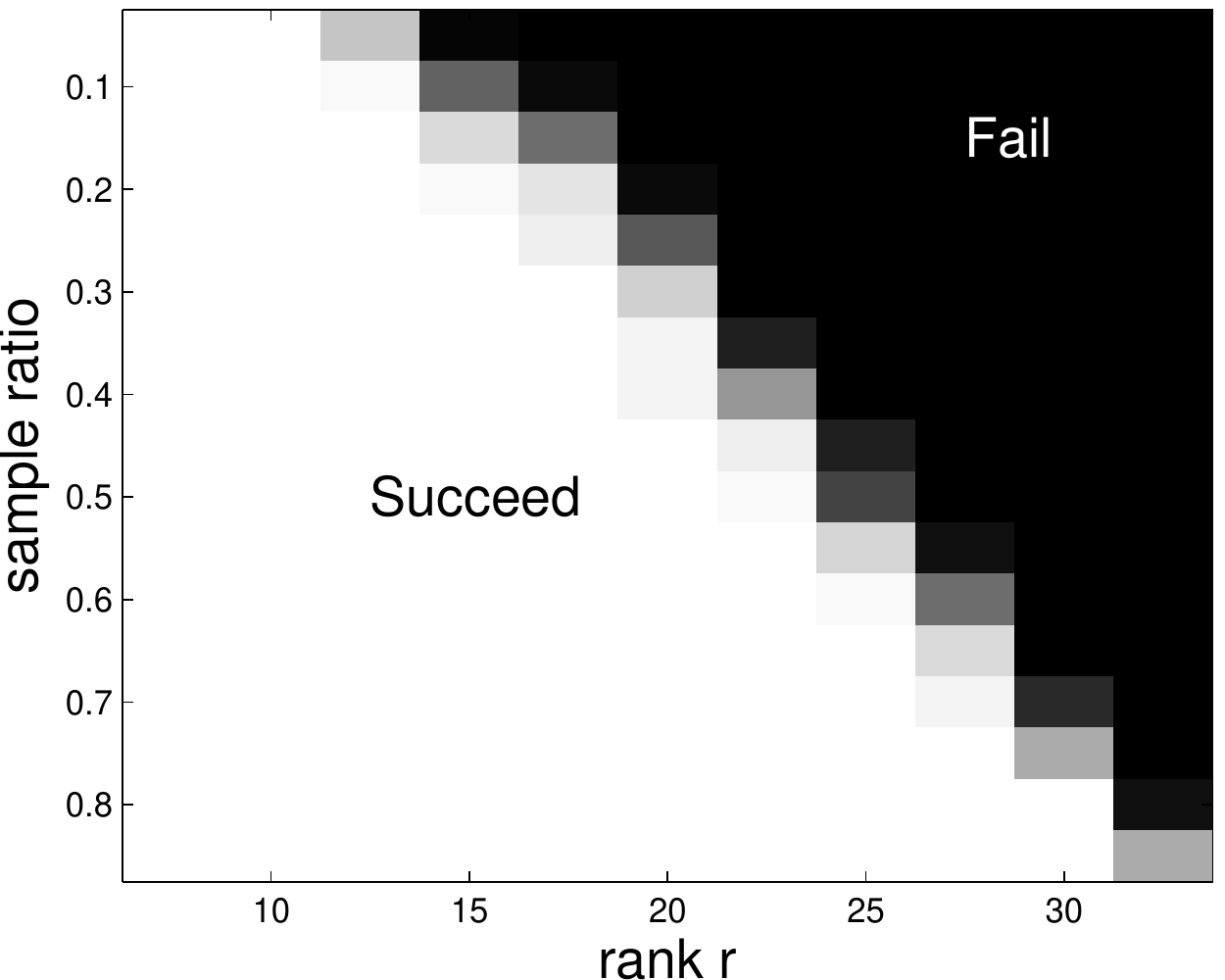}
\end{minipage}
}
\end{figure}

\begin{figure}[H]
\captionsetup{width=0.99\textwidth}
\caption{Phase transition plots for different methods on \textbf{4-way low-rank tensors whose factors have Gaussian random entries}. (a) SquareDeal: the matrix completion solver FPCA \cite{ma2011fixed} solves the model \eqref{eq:square} proposed in \cite{mu2013square}. (b) the matrix completion solver LMaFit \cite{wen2012lmafit} solves \eqref{eq:sqm-lmafit}, which is a non-convex variant of \eqref{eq:square}. (c) TMac-fix: Algorithm \ref{alg:als} solves \eqref{eq:main} with $\alpha_n=\frac{1}{4}$ and $r_n$ fixed to $r$, $\forall n$. (d) TMac-inc: Algorithm \ref{alg:als} solves \eqref{eq:main} with $\alpha_n=\frac{1}{4},\forall n$ and using rank-increasing strategy starting from $r_n=\text{round}(0.75r),\forall n$. (e) TMac-dec: Algorithm \ref{alg:als} solves \eqref{eq:main} with $\alpha_n=\frac{1}{4},\forall n$ and using rank-decreasing strategy starting from $r_n=\text{round}(1.25r),\forall n$.}\label{fig:rate-4G}
\centering
\subfigure[SquareDeal]{
\begin{minipage}[t]{0.25\textwidth}
\centering
\includegraphics[width=0.99\textwidth]{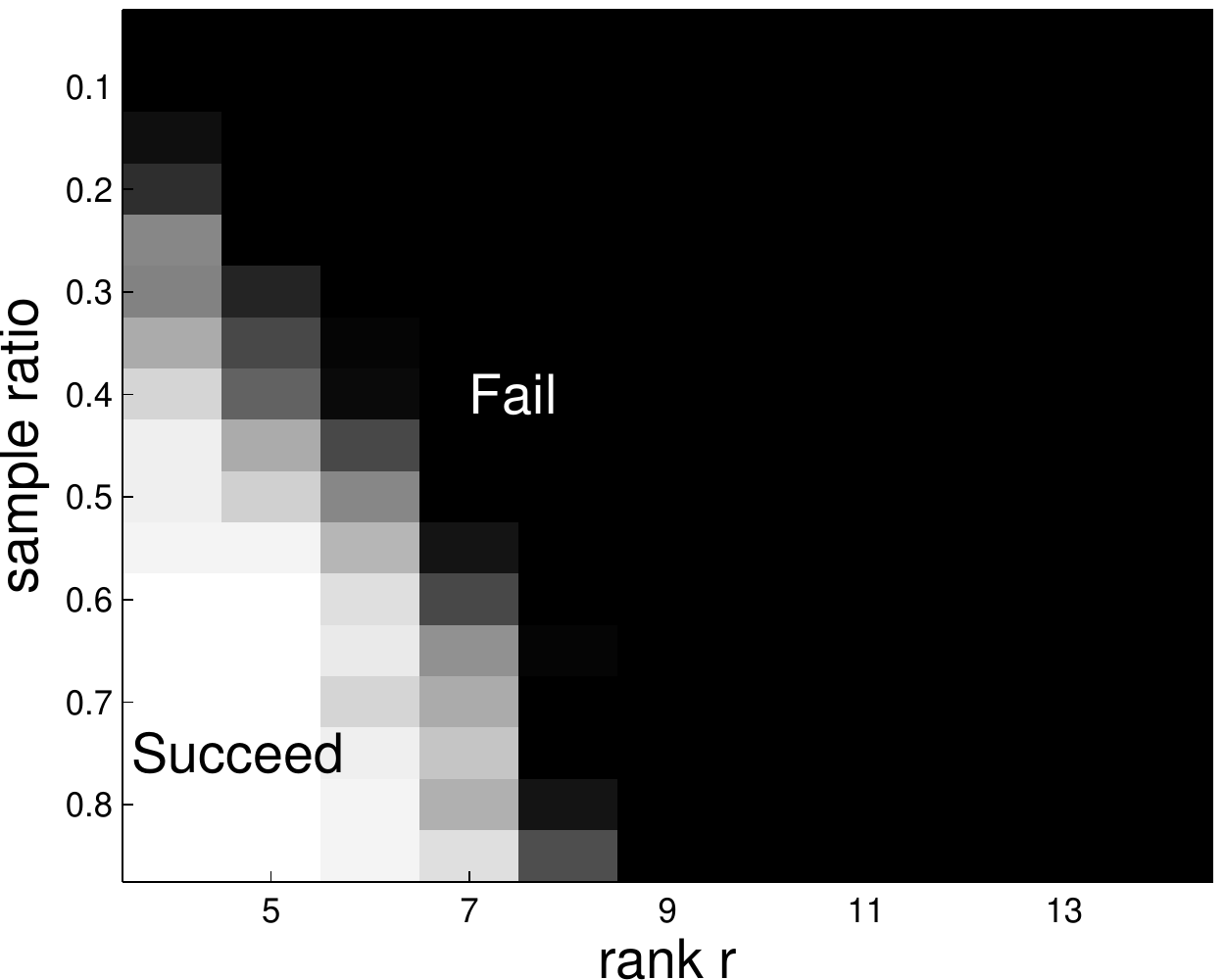}
\end{minipage}
}
\subfigure[\eqref{eq:sqm-lmafit} solved by LMaFit]{
\begin{minipage}[t]{0.25\textwidth}
\centering
\includegraphics[width=0.99\textwidth]{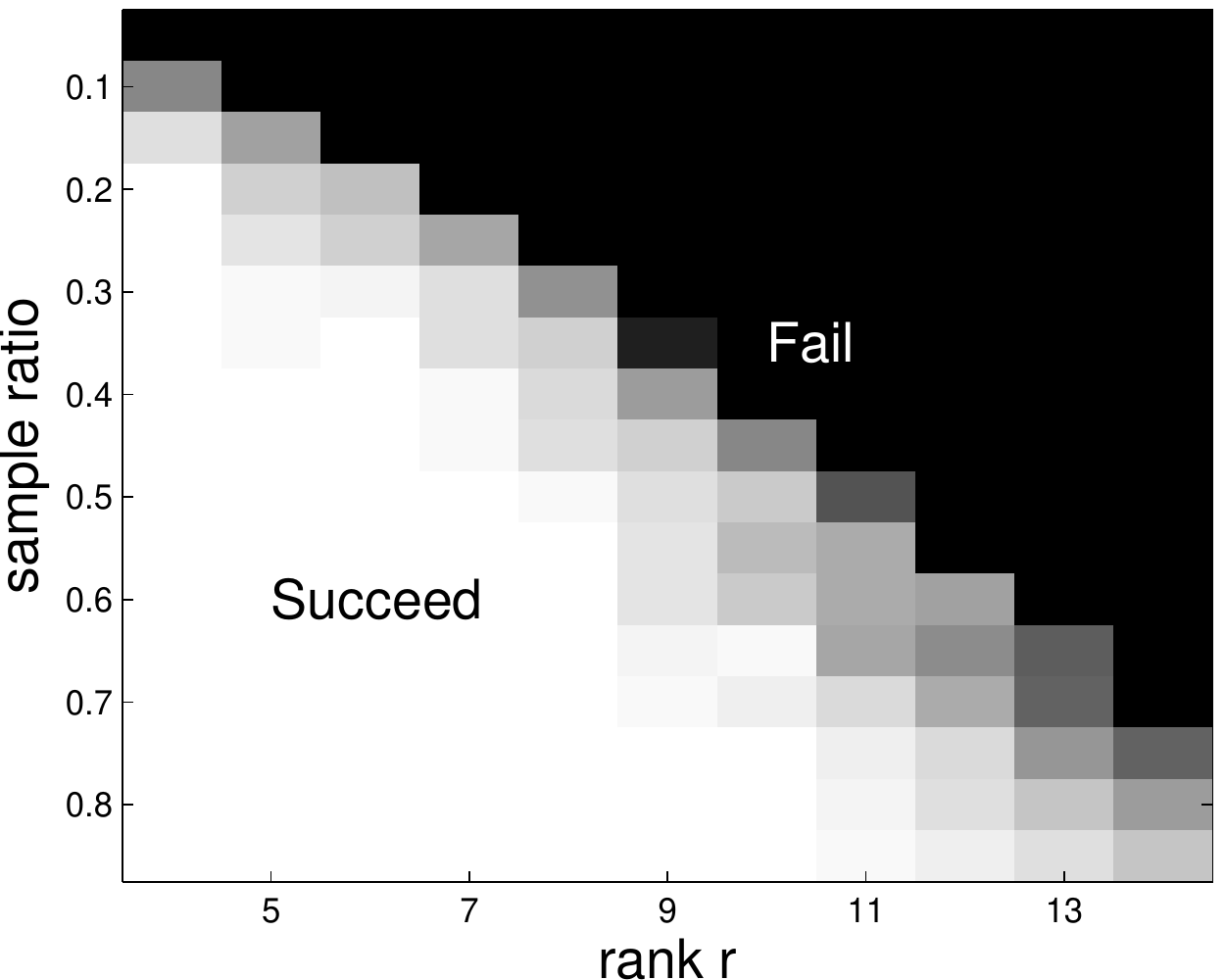}
\end{minipage}
}\\
\subfigure[TMac-fix]{
\begin{minipage}[t]{0.25\textwidth}
\centering
\includegraphics[width=0.99\textwidth]{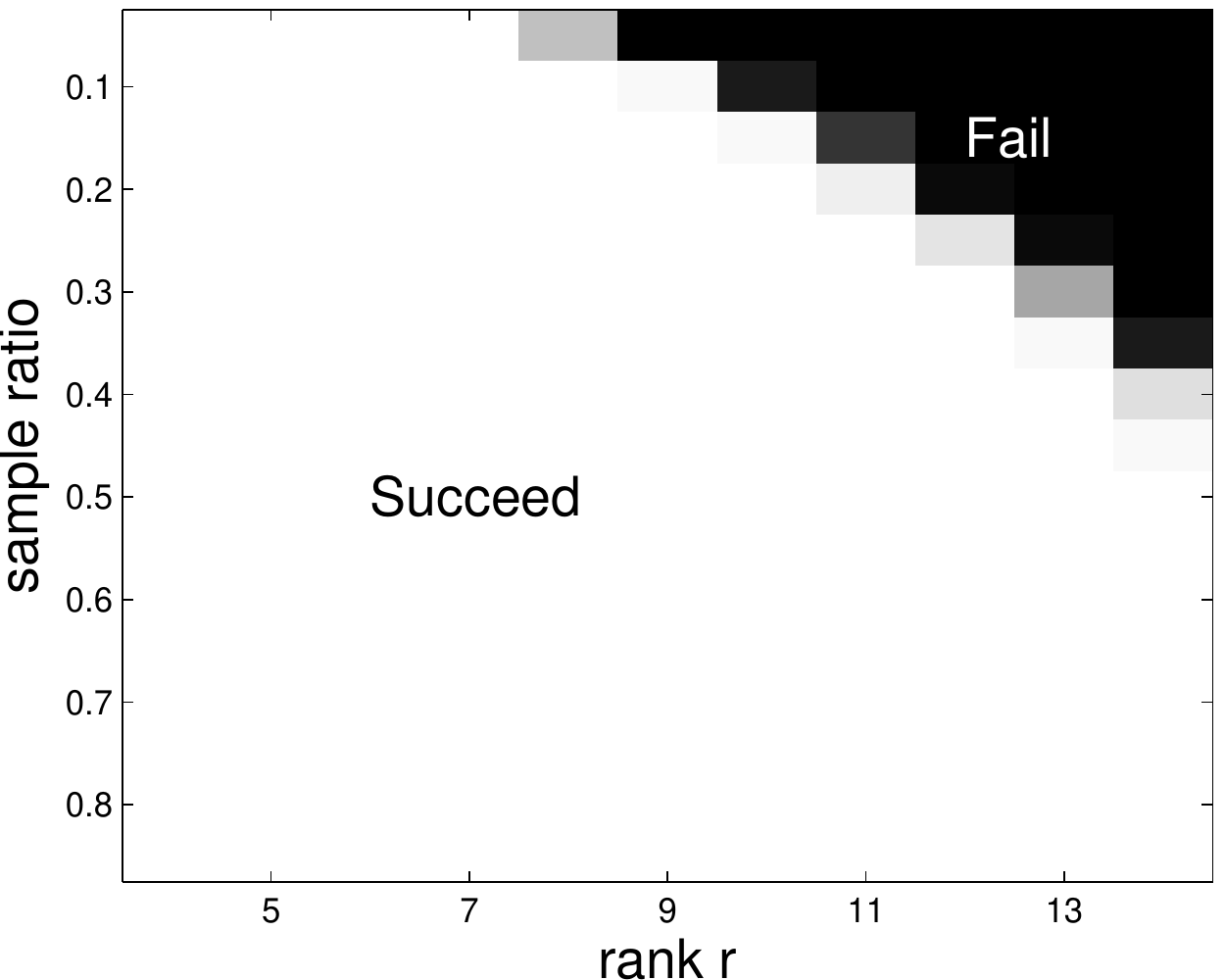}
\end{minipage}
}
\subfigure[TMac-inc]{
\begin{minipage}[t]{0.25\textwidth}
\centering
\includegraphics[width=0.99\textwidth]{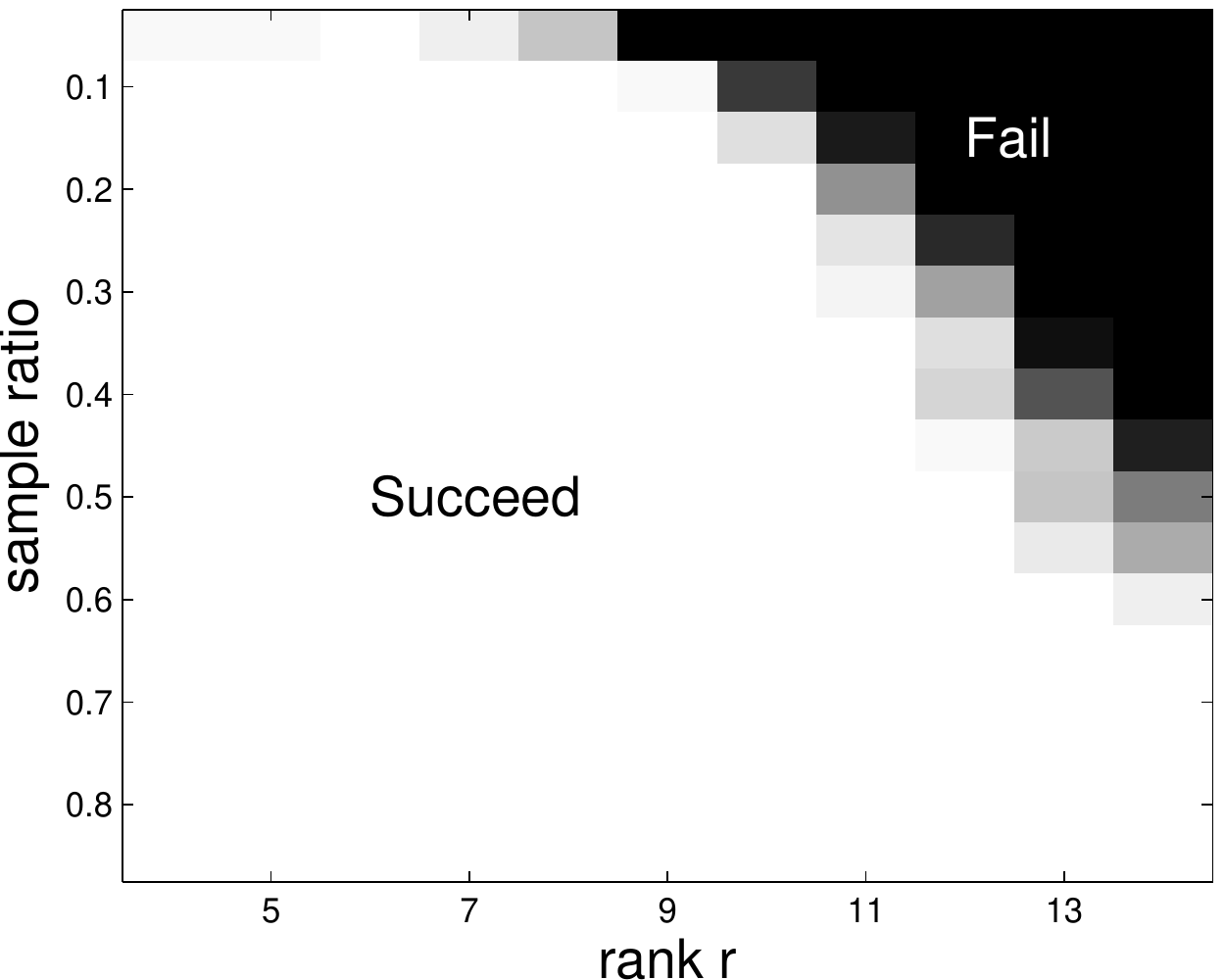}
\end{minipage}
}
\subfigure[TMac-dec]{
\begin{minipage}[t]{0.25\textwidth}
\centering
\includegraphics[width=0.99\textwidth]{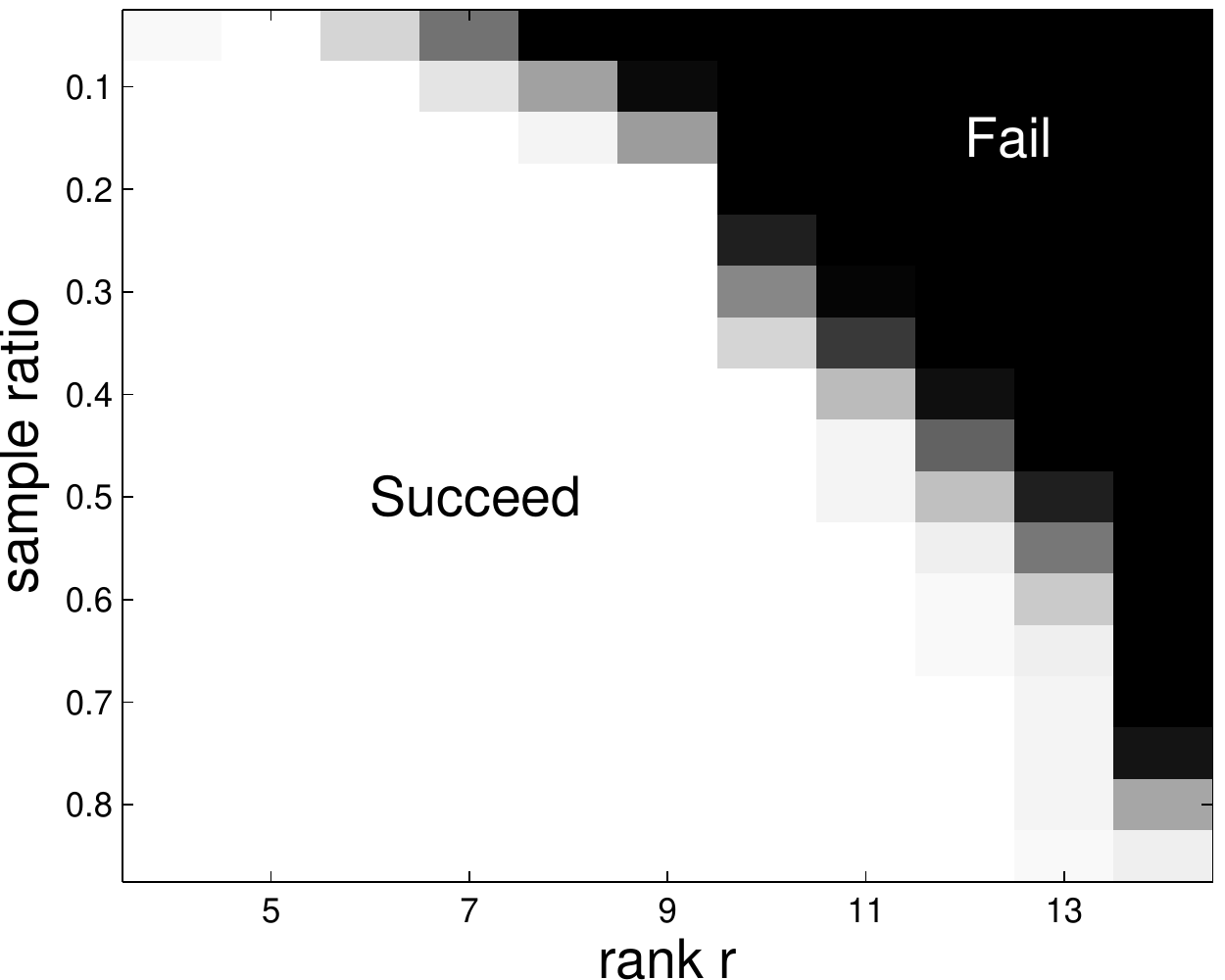}
\end{minipage}
}
\end{figure}

\begin{figure}[H]
\captionsetup{width=0.99\textwidth}
\caption{Phase transition plots for model \eqref{eq:main} utilizing different numbers of mode matricization on \textbf{4-way low-rank tensors whose factors have Gaussian random entries}. (a) 1 mode: Algorithm \ref{alg:als} solves \eqref{eq:main} with $\alpha_1=1,\alpha_n=0,n\ge2$ and each $r_n$ fixed to $r$; (b) 2 modes: Algorithm \ref{alg:als} solves \eqref{eq:main} with $\alpha_1=\alpha_2=0.5,\alpha_3=\alpha_4=0,$ and each $r_n$ fixed to $r$; (c) 3 modes: Algorithm \ref{alg:als} solves \eqref{eq:main} with $\alpha_n=\frac{1}{3}, n\le 3, \alpha_4=0,$ and each $r_n$ fixed to $r$; (d) 4 modes: Algorithm \ref{alg:als} solves \eqref{eq:main} with $\alpha_n=0.25,\forall n$ and each $r_n$ fixed to $r$.}\label{fig:diff-mode}
\centering
\subfigure[1 mode]{
\begin{minipage}[t]{0.25\textwidth}
\centering
\includegraphics[width=0.99\textwidth]{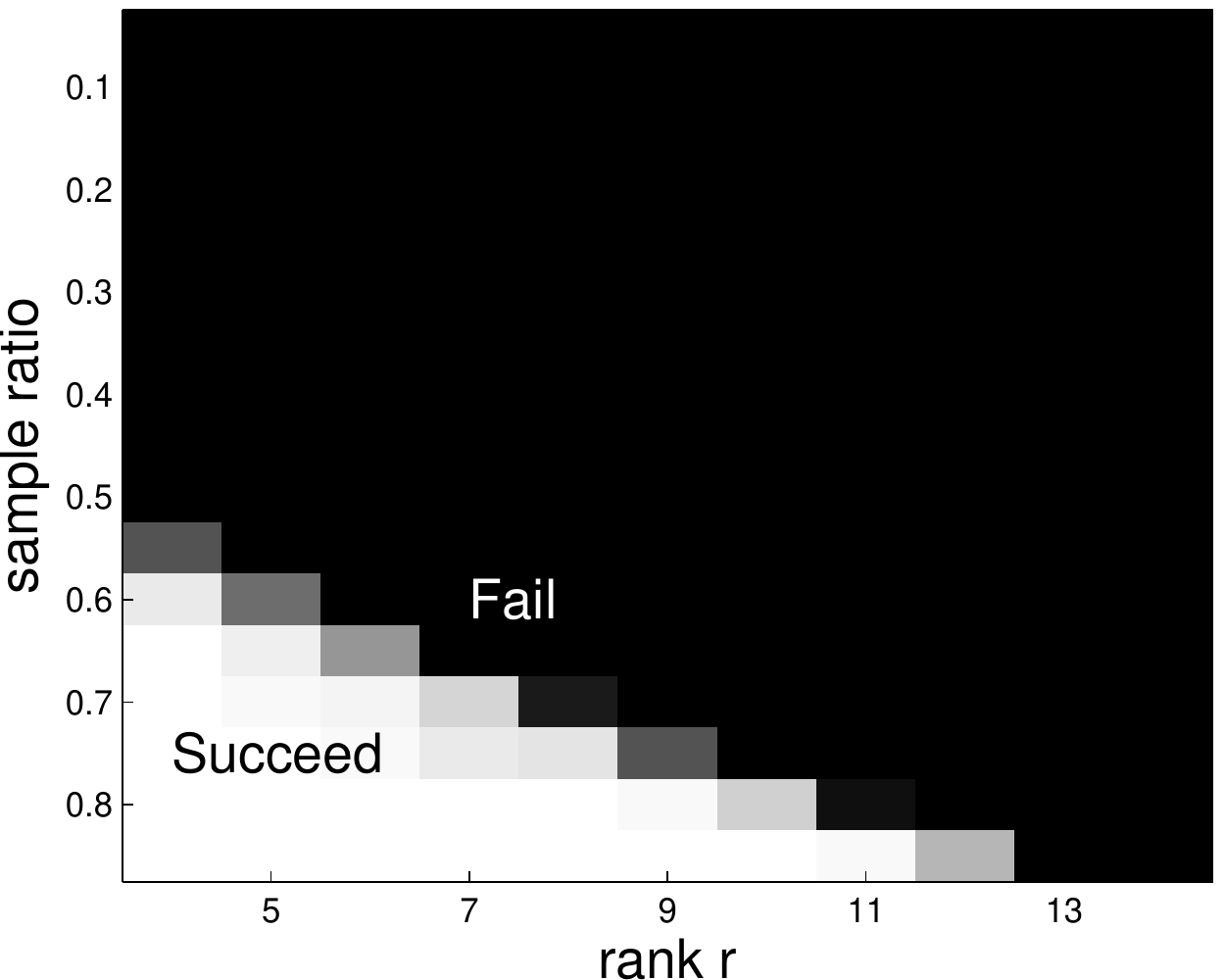}
\end{minipage}
}
\subfigure[2 modes]{
\begin{minipage}[t]{0.25\textwidth}
\centering
\includegraphics[width=0.99\textwidth]{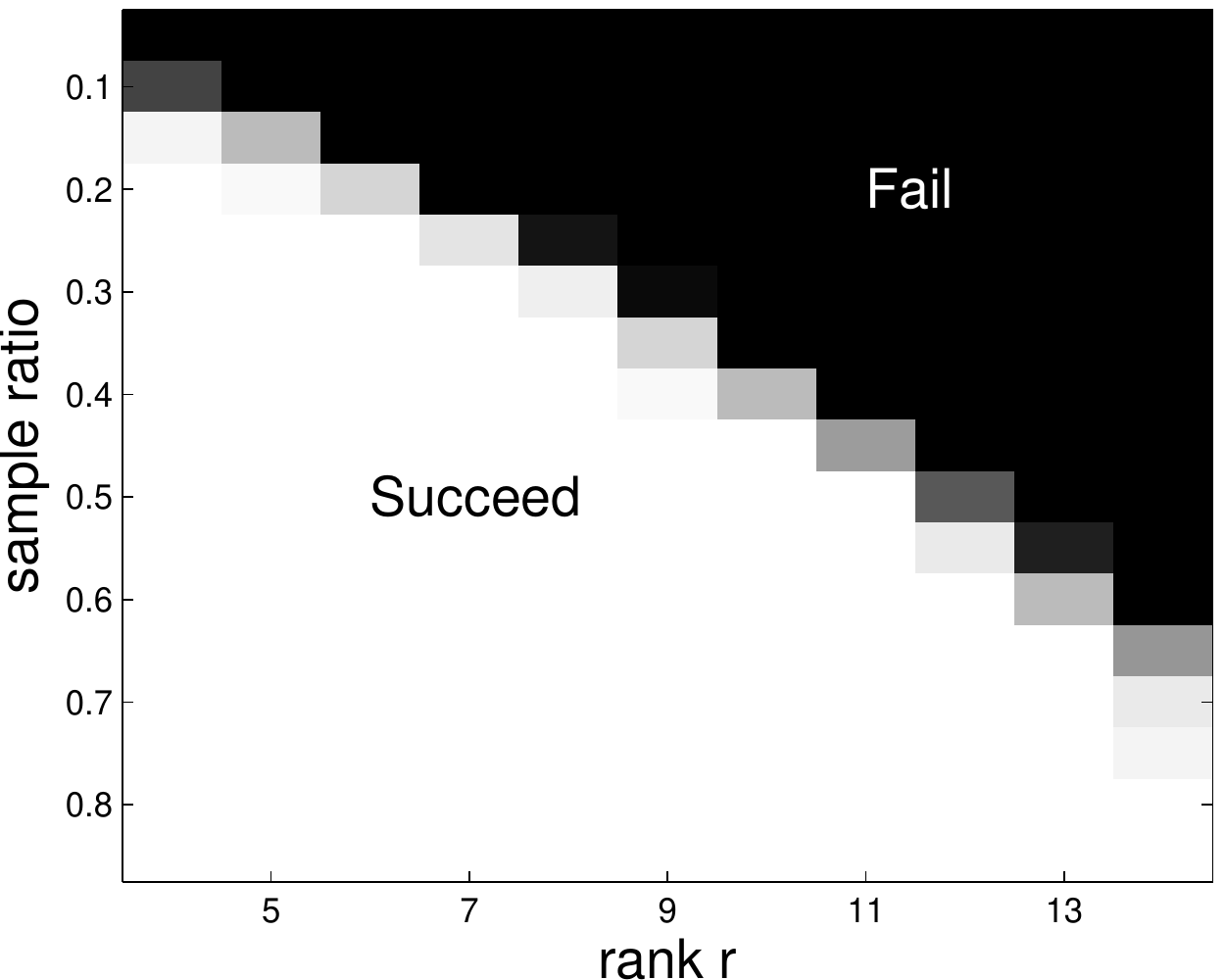}
\end{minipage}
}\\
\subfigure[3 modes]{
\begin{minipage}[t]{0.25\textwidth}
\centering
\includegraphics[width=0.99\textwidth]{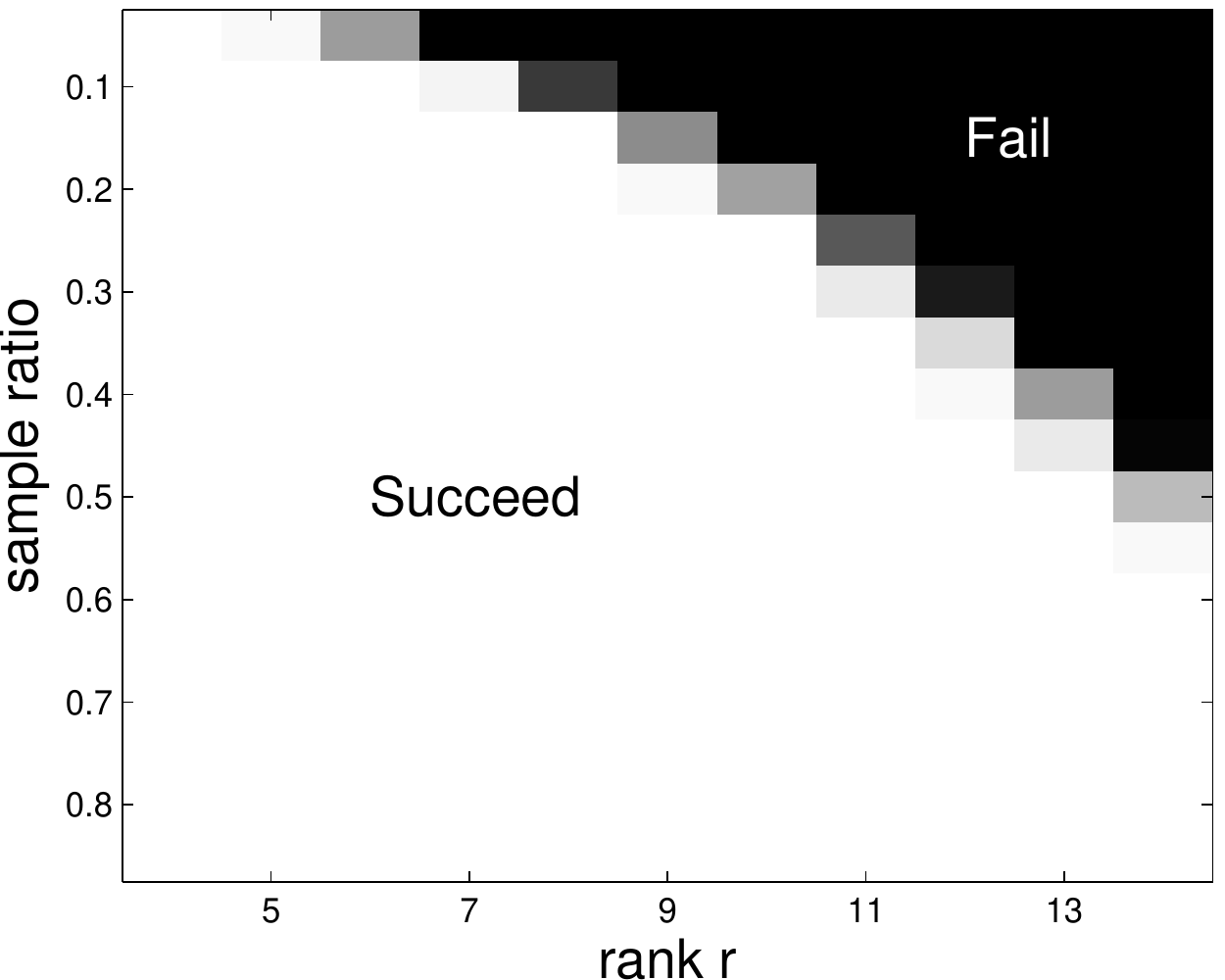}
\end{minipage}
}
\subfigure[4 modes]{
\begin{minipage}[t]{0.25\textwidth}
\centering
\includegraphics[width=0.99\textwidth]{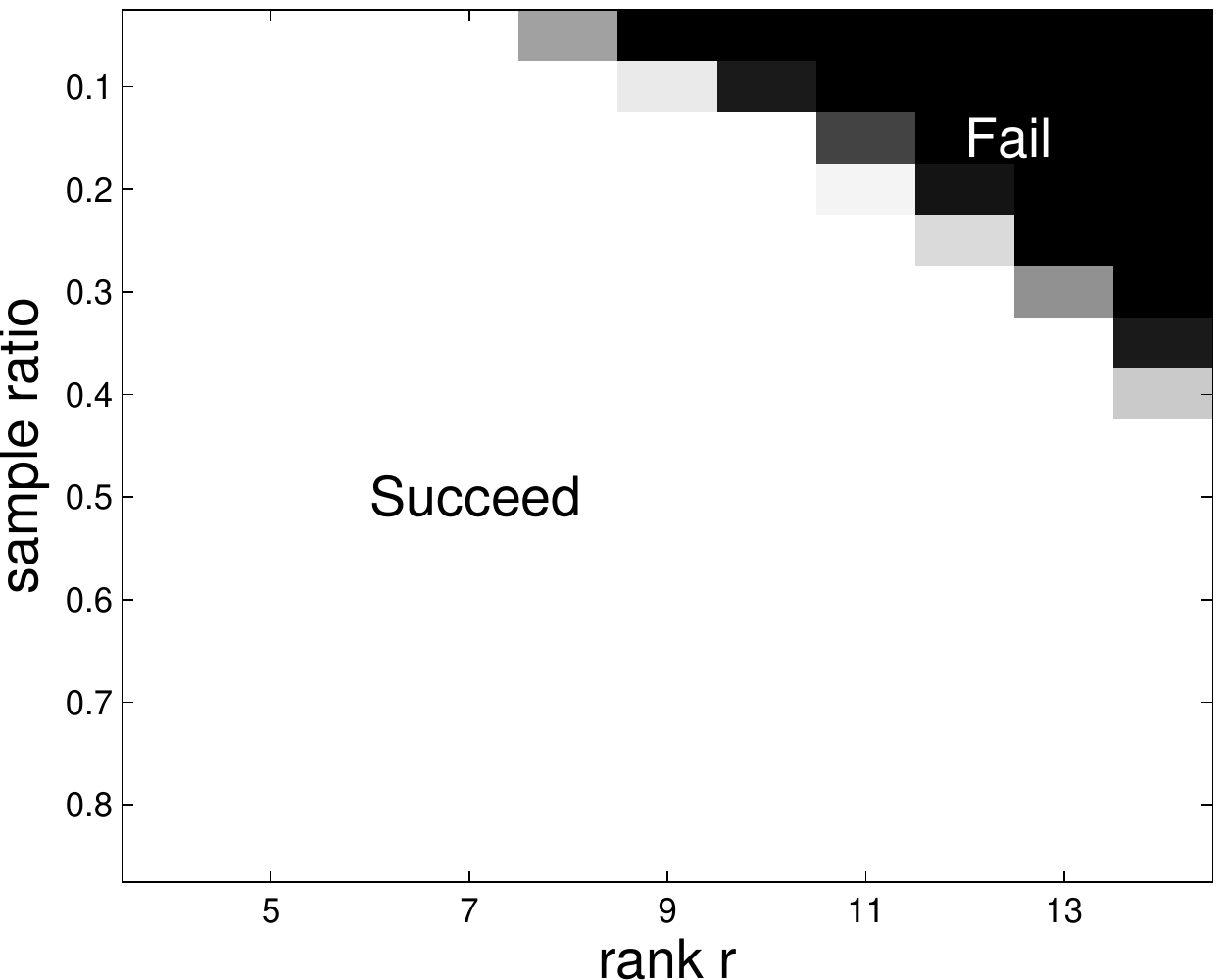}
\end{minipage}
}
\end{figure}

\begin{figure}[H]
\captionsetup{width=0.99\textwidth}
\caption{Phase transition plots for different methods on \textbf{3-way low-rank tensors whose factors have uniformly random entries}. (a) FaLRTC: the tensor completion method in \cite{liu2013tensor} that solves \eqref{eq:conv}. (b) MatComp: the matrix completion solver LMaFit in \cite{wen2012lmafit} that solves \eqref{eq:lmafit} with $\mbfz$ corresponding to $\mbfm_{(N)}$. (c) TMac-fix: Algorithm \ref{alg:als} solves \eqref{eq:main} with $\alpha_n=\frac{1}{3}$ and $r_n$ fixed to $r$, $\forall n$. (d) TMac-inc: Algorithm \ref{alg:als} solves \eqref{eq:main} with $\alpha_n=\frac{1}{3},\forall n$ and using rank-increasing strategy starting from $r_n=\text{round}(0.75r),\forall n$. (e) TMac-dec: Algorithm \ref{alg:als} solves \eqref{eq:main} with $\alpha_n=\frac{1}{3},\forall n$ and using rank-decreasing strategy starting from $r_n=\text{round}(1.25r),\forall n$.}\label{fig:3way-rand}
\centering
\subfigure[MatComp]{
\begin{minipage}[t]{0.25\textwidth}
\centering
\includegraphics[width=0.99\textwidth]{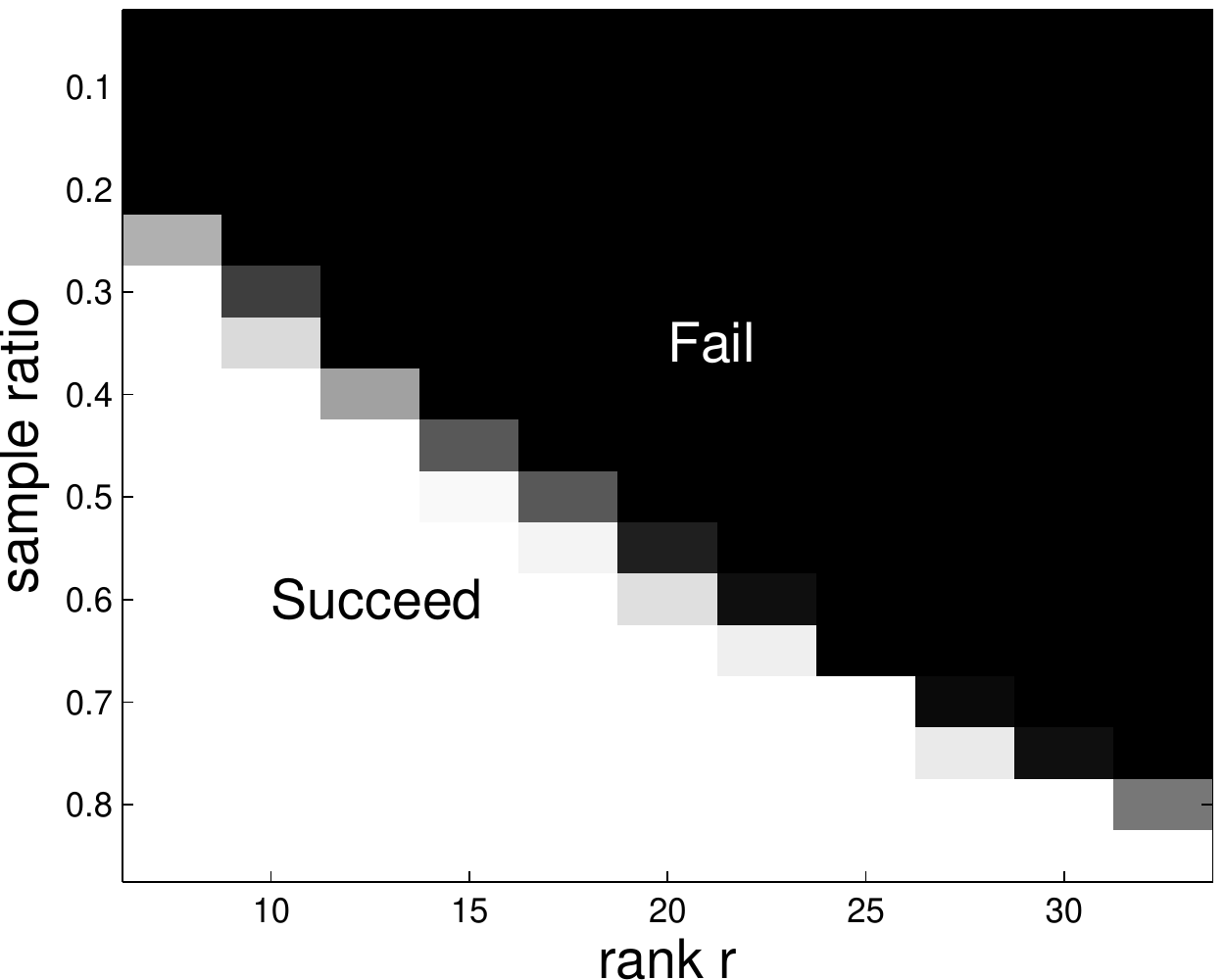}
\end{minipage}
}
\subfigure[FaLRTC]{
\begin{minipage}[t]{0.25\textwidth}
\centering
\includegraphics[width=0.99\textwidth]{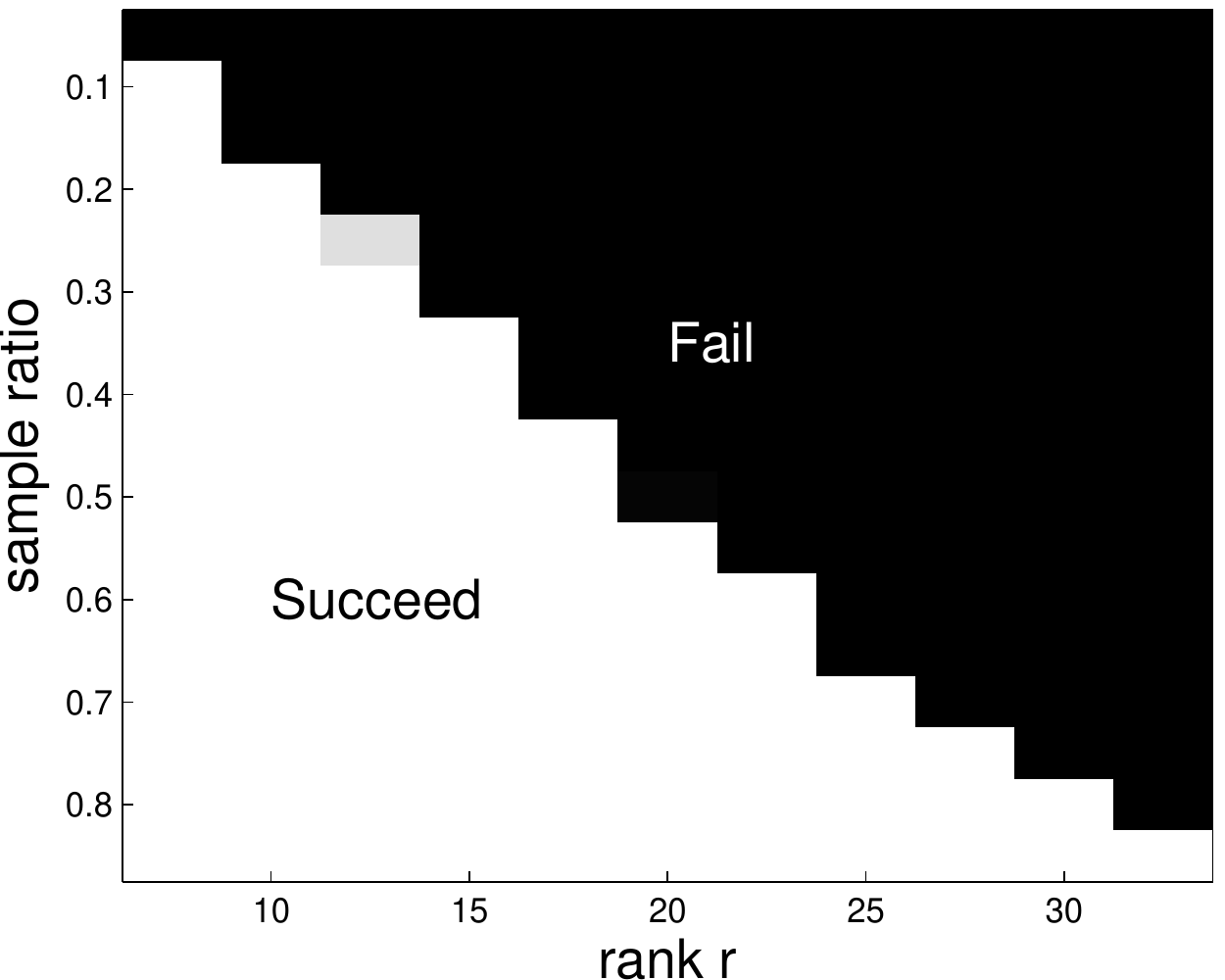}
\end{minipage}
}\\
\subfigure[TMac-fix]{
\begin{minipage}[t]{0.25\textwidth}
\centering
\includegraphics[width=0.99\textwidth]{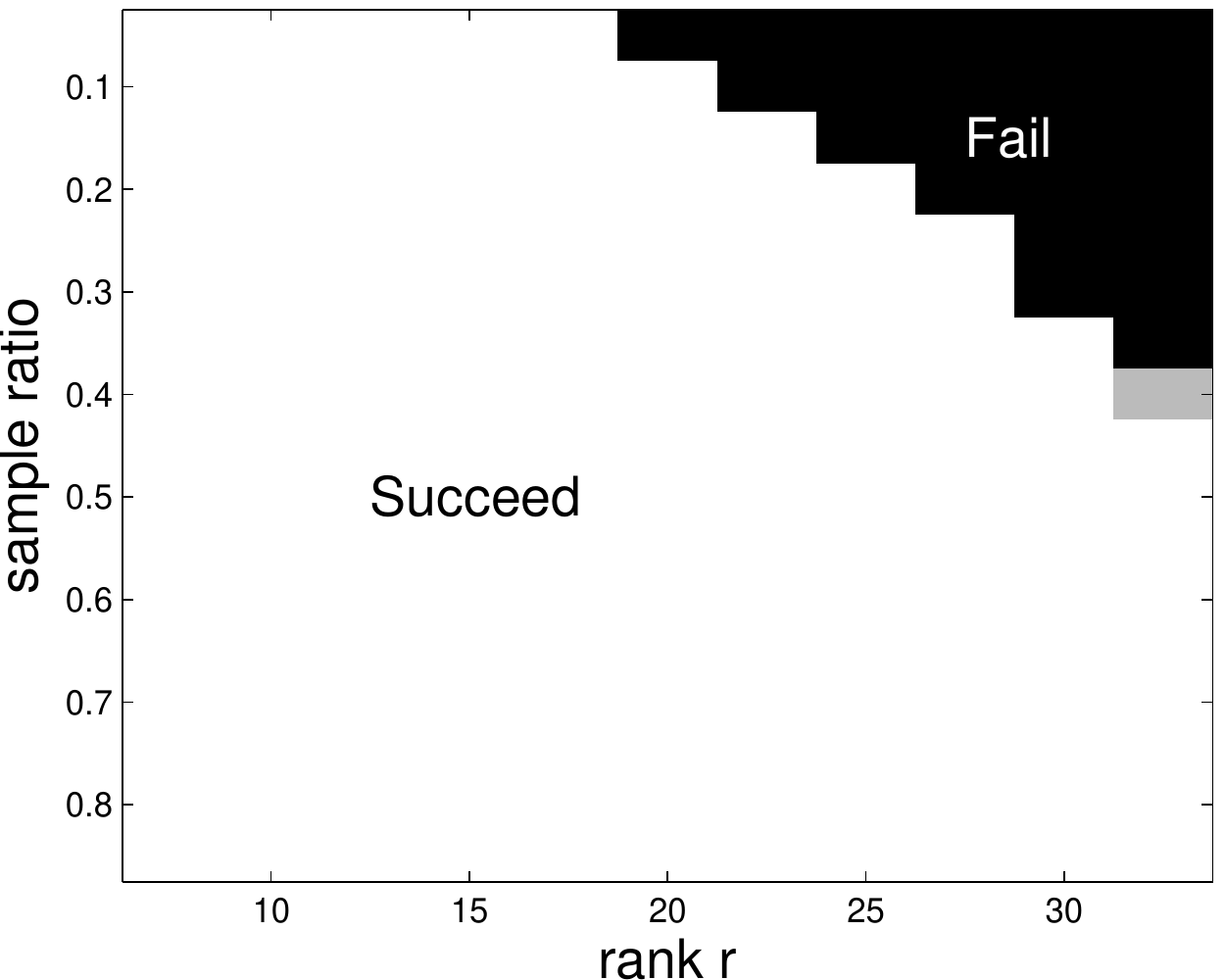}
\end{minipage}
}
\subfigure[TMac-inc]{
\begin{minipage}[t]{0.25\textwidth}
\centering
\includegraphics[width=0.99\textwidth]{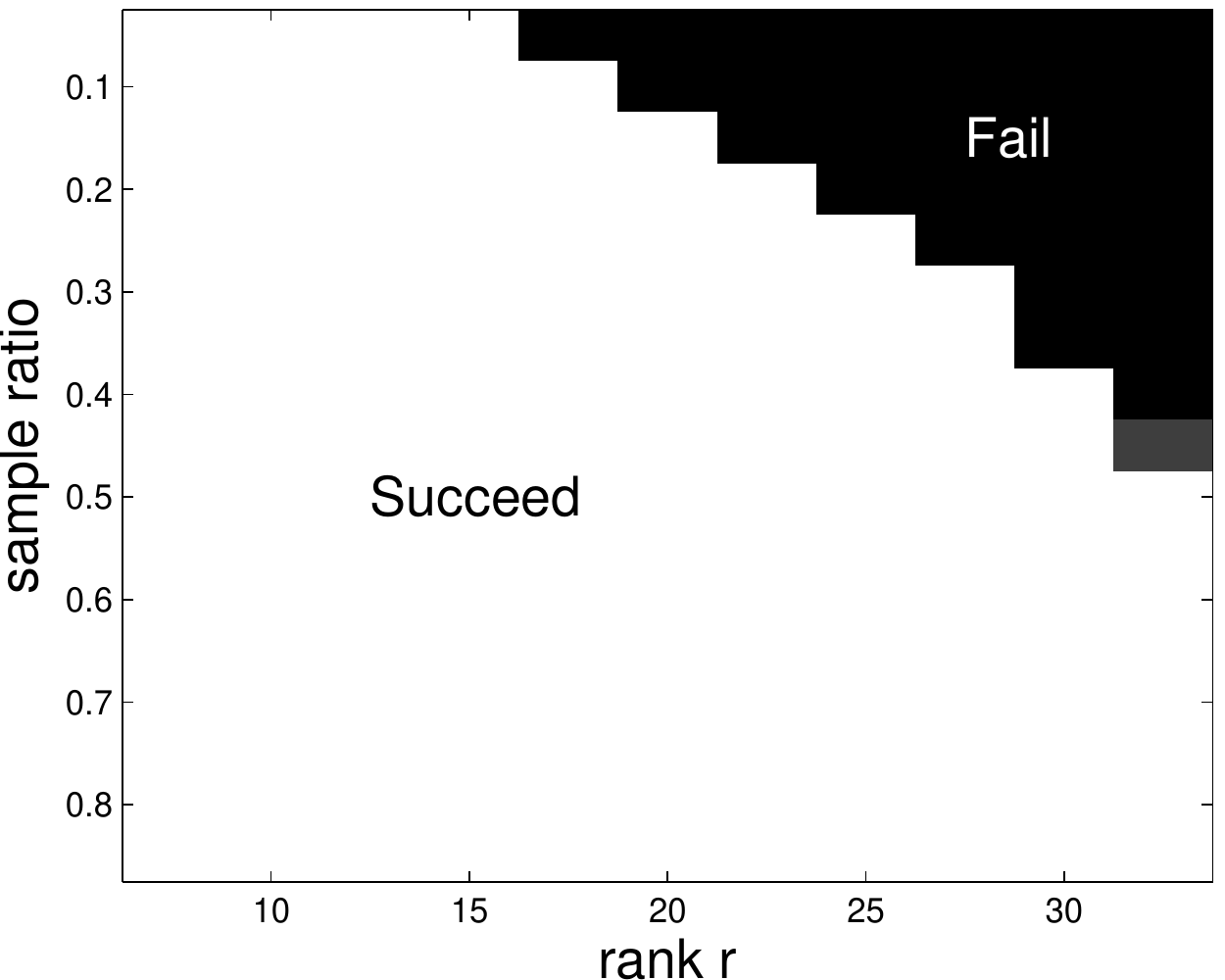}
\end{minipage}
}
\subfigure[TMac-dec]{
\begin{minipage}[t]{0.25\textwidth}
\centering
\includegraphics[width=0.99\textwidth]{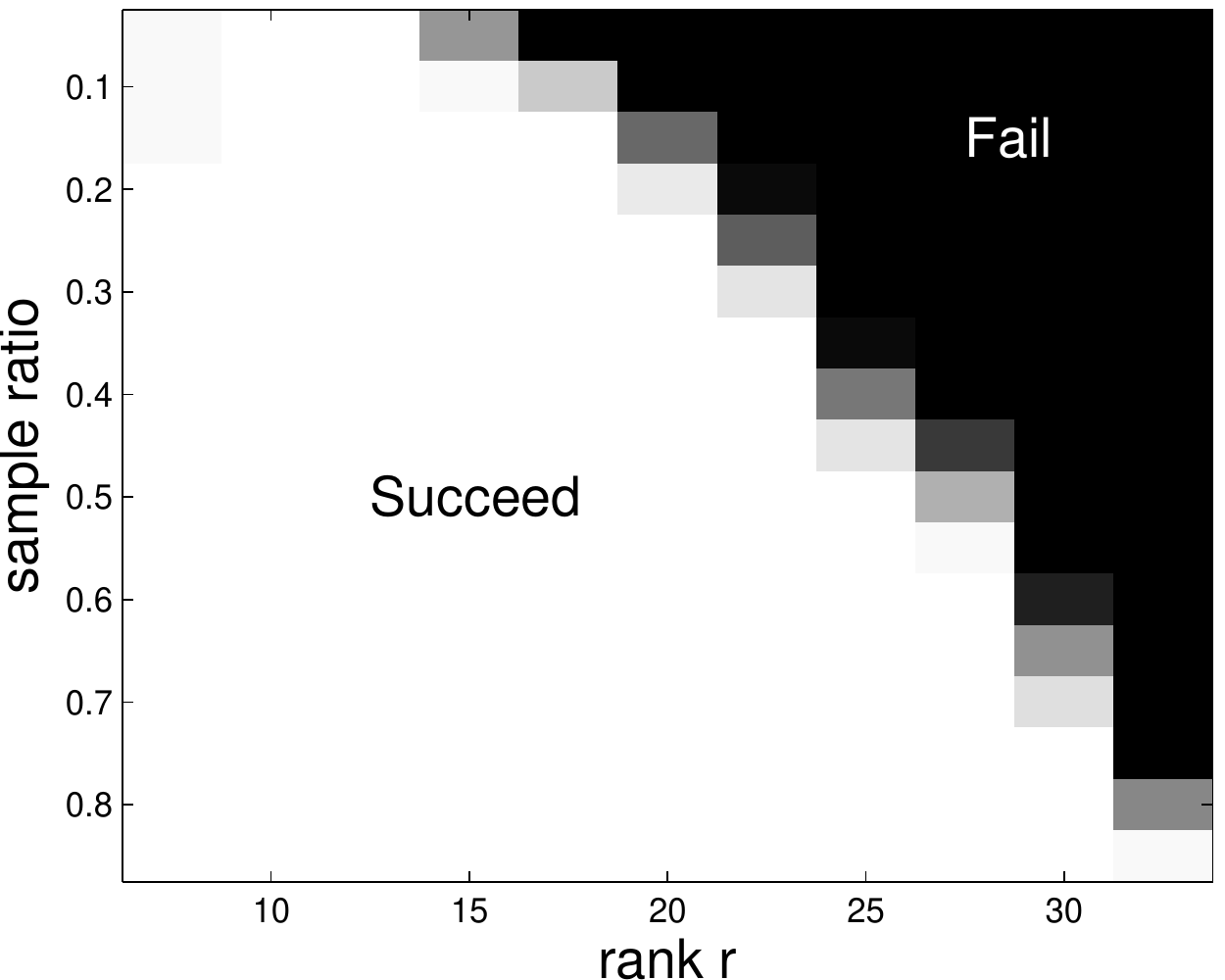}
\end{minipage}
}
\end{figure}

\begin{figure}[H]
\captionsetup{width=0.99\textwidth}
\caption{Phase transition plots for different methods on \textbf{4-way low-rank tensors whose factors have uniformly random entries}. (a) SquareDeal: the matrix completion solver FPCA \cite{ma2011fixed} solves the model \eqref{eq:square} proposed in \cite{mu2013square}. (b) the matrix completion solver LMaFit \cite{wen2012lmafit} solves \eqref{eq:sqm-lmafit}, which is a non-convex variant of \eqref{eq:square}. (c) TMac-fix: Algorithm \ref{alg:als} solves \eqref{eq:main} with $\alpha_n=\frac{1}{4}$ and $r_n$ fixed to $r$, $\forall n$. (d) TMac-inc: Algorithm \ref{alg:als} solves \eqref{eq:main} with $\alpha_n=\frac{1}{4},\forall n$ and using rank-increasing strategy starting from $r_n=\text{round}(0.75r),\forall n$. (e) TMac-dec: Algorithm \ref{alg:als} solves \eqref{eq:main} with $\alpha_n=\frac{1}{4},\forall n$ and using rank-decreasing strategy starting from $r_n=\text{round}(1.25r),\forall n$.}\label{fig:4way-rand}
\centering
\subfigure[SquareDeal]{
\begin{minipage}[t]{0.25\textwidth}
\centering
\includegraphics[width=0.99\textwidth]{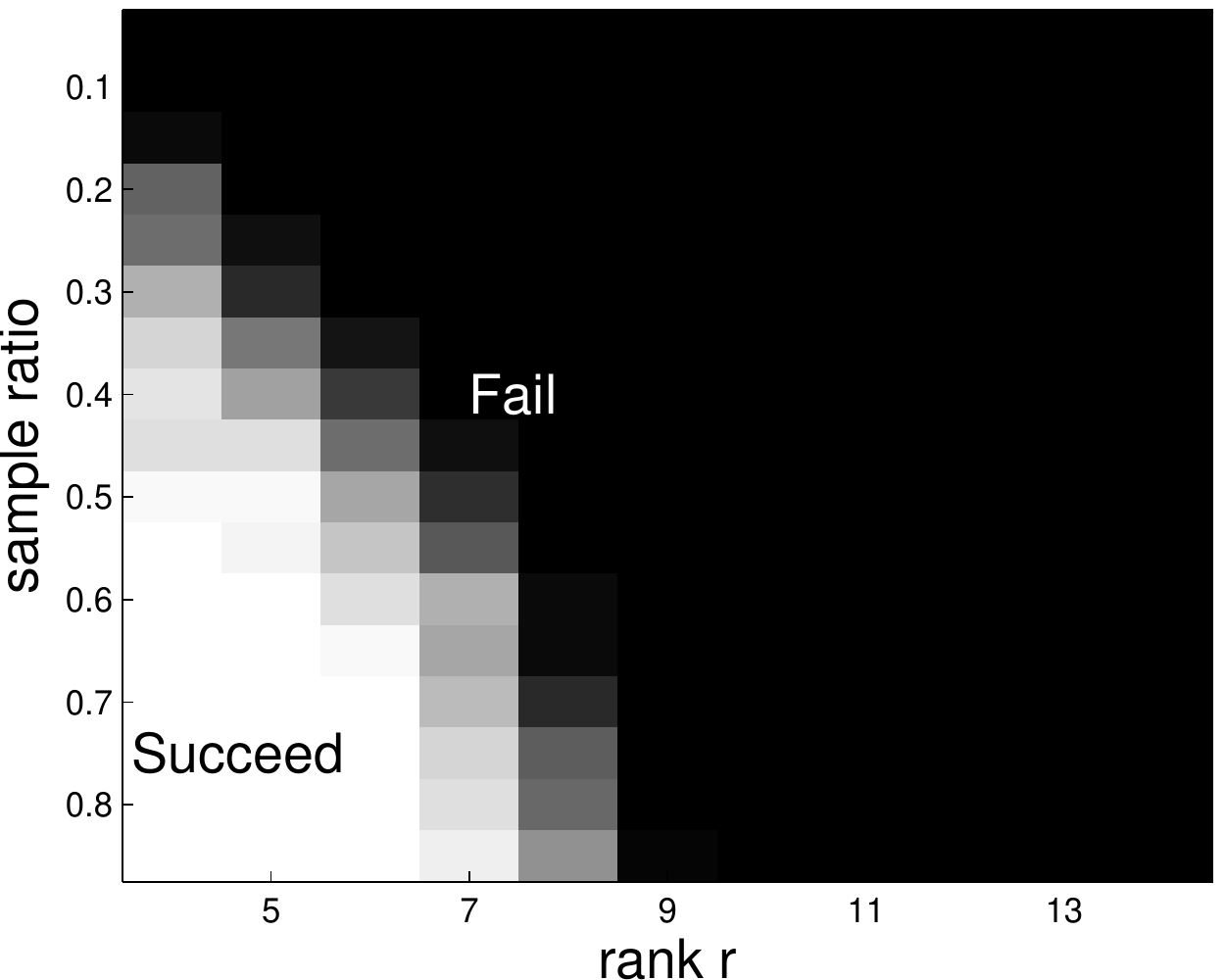}
\end{minipage}
}
\subfigure[\eqref{eq:sqm-lmafit} solved by LMaFit]{
\begin{minipage}[t]{0.25\textwidth}
\centering
\includegraphics[width=0.99\textwidth]{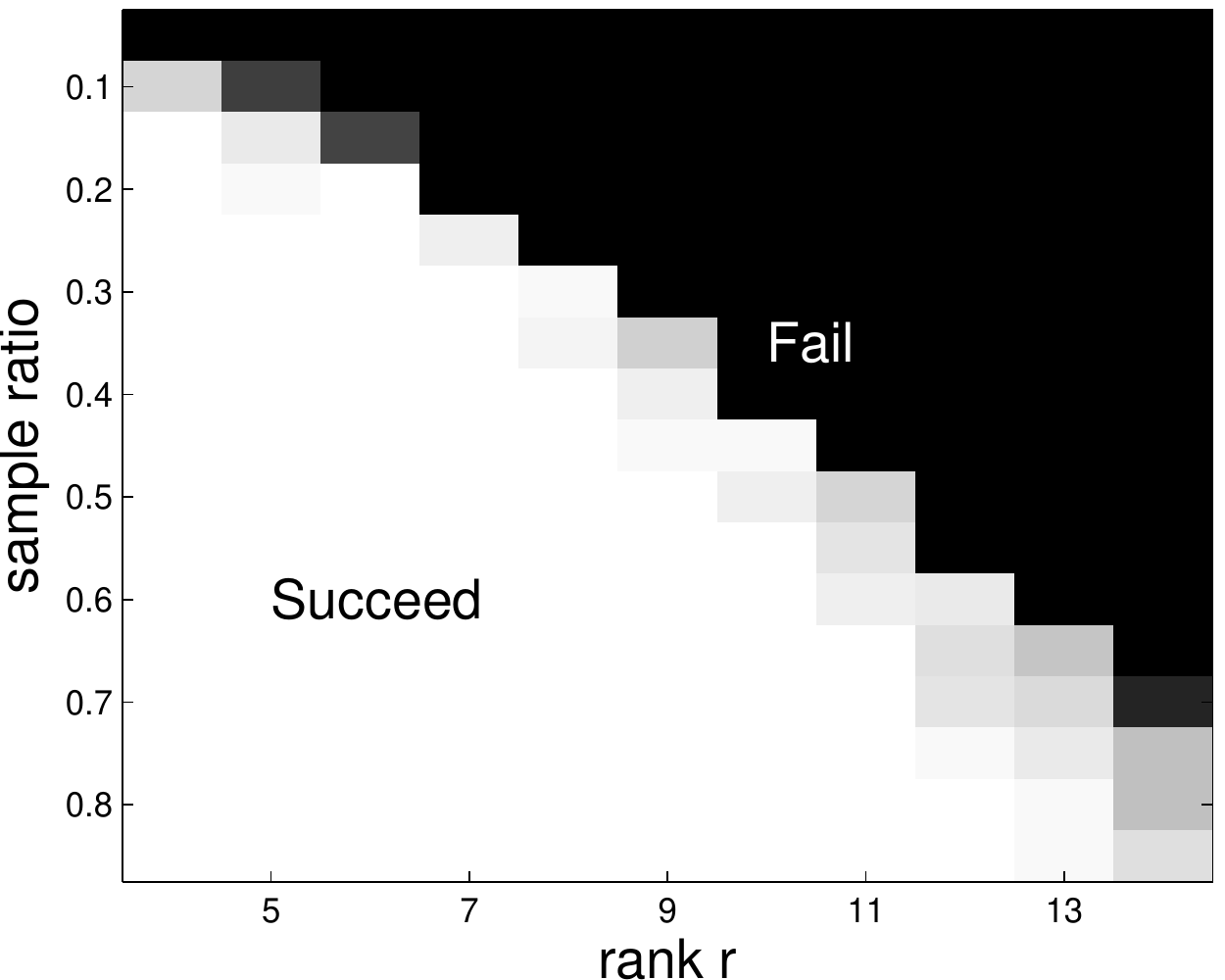}
\end{minipage}
}\\
\subfigure[TMac-fix]{
\begin{minipage}[t]{0.25\textwidth}
\centering
\includegraphics[width=0.99\textwidth]{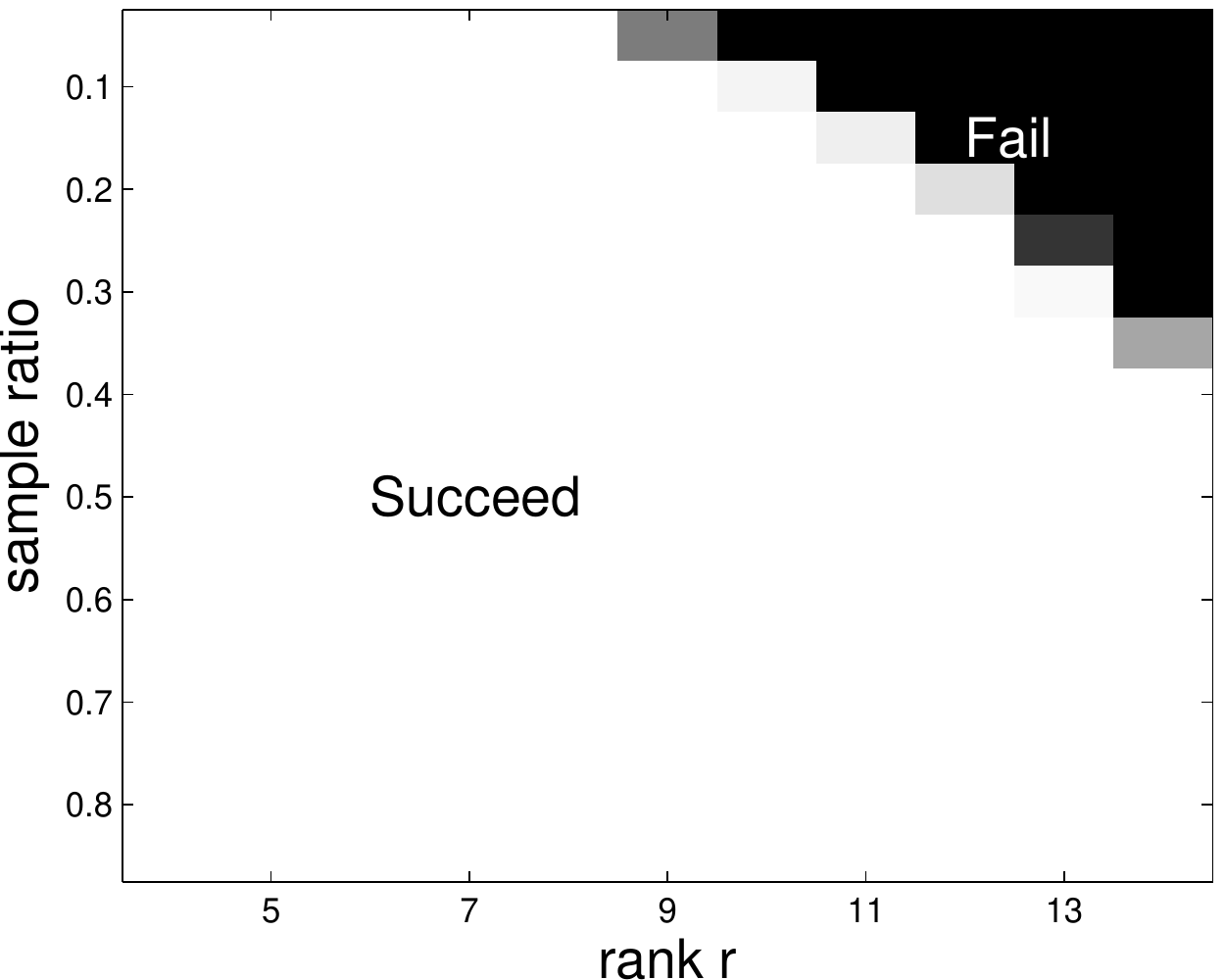}
\end{minipage}
}
\subfigure[TMac-inc]{
\begin{minipage}[t]{0.25\textwidth}
\centering
\includegraphics[width=0.99\textwidth]{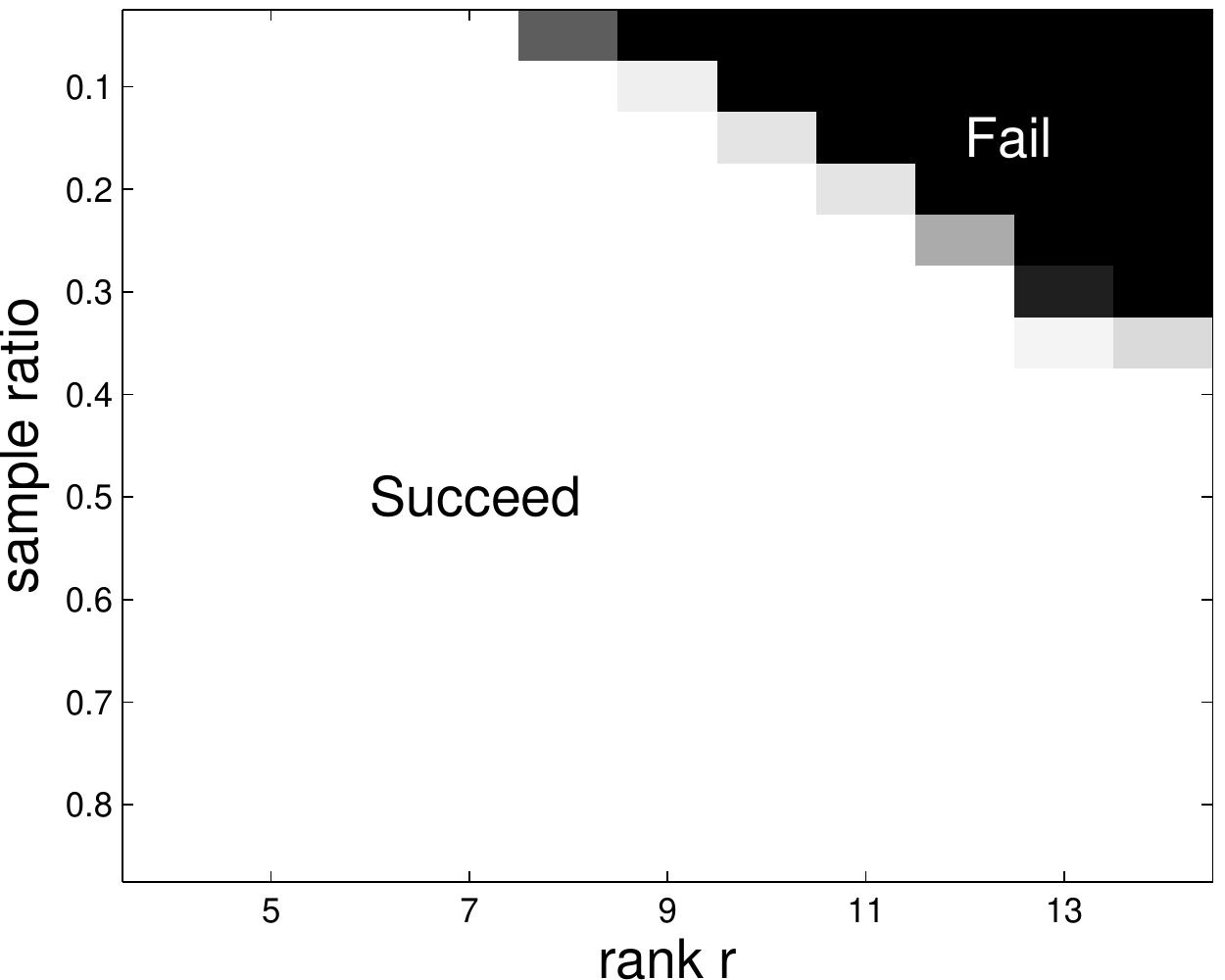}
\end{minipage}
}
\subfigure[TMac-dec]{
\begin{minipage}[t]{0.25\textwidth}
\centering
\includegraphics[width=0.99\textwidth]{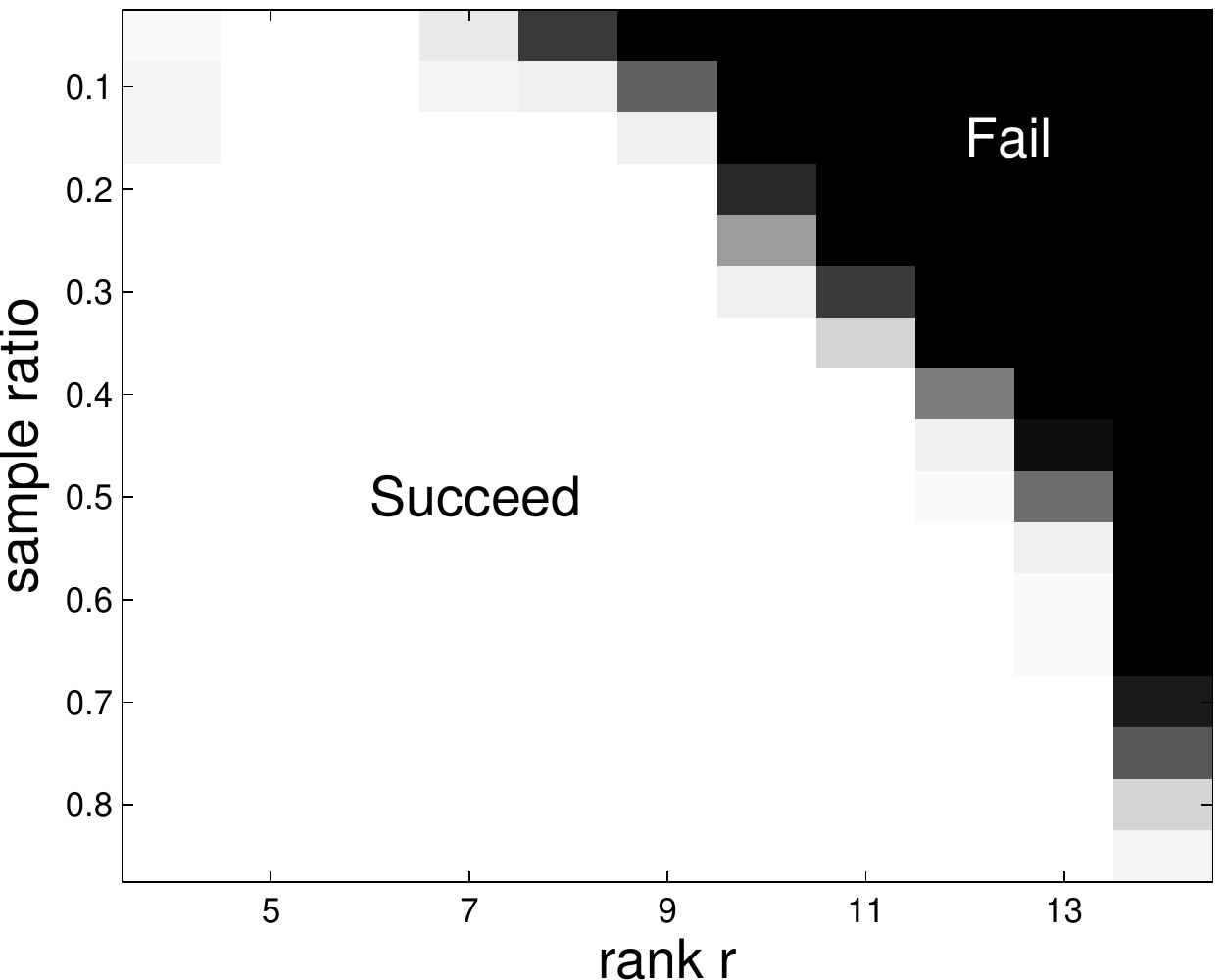}
\end{minipage}
}
\end{figure}

\begin{figure}[H]
\captionsetup{width=0.99\textwidth}
\caption{Phase transition plots for different methods on \textbf{3-way random low-rank tensors whose factors have power-law decaying singular values}. (a) FaLRTC: the tensor completion method in \cite{liu2013tensor} that solves \eqref{eq:conv}. (b) MatComp: the matrix completion solver LMaFit in \cite{wen2012lmafit} that solves \eqref{eq:lmafit} with $\mbfz$ corresponding to $\mbfm_{(N)}$. (c) TMac-fix: Algorithm \ref{alg:als} solves \eqref{eq:main} with $\alpha_n=\frac{1}{3}$ and $r_n$ fixed to $r$, $\forall n$. (d) TMac-inc: Algorithm \ref{alg:als} solves \eqref{eq:main} with $\alpha_n=\frac{1}{3},\forall n$ and using rank-increasing strategy starting from $r_n=\text{round}(0.75r),\forall n$.}\label{fig:3way-plaw}
\centering
\subfigure[MatComp]{
\begin{minipage}[t]{0.25\textwidth}
\centering
\includegraphics[width=0.99\textwidth]{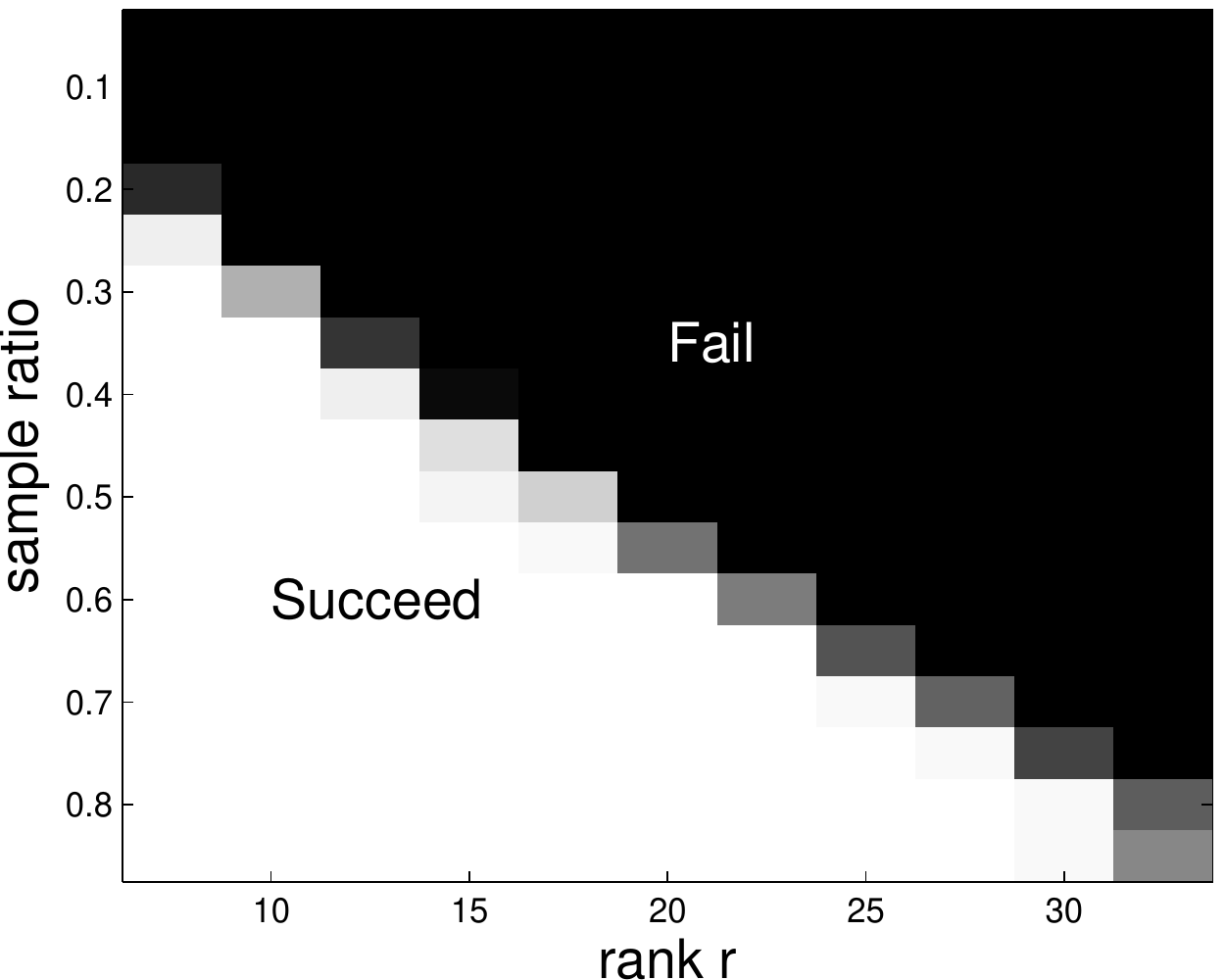}
\end{minipage}
}
\subfigure[FaLRTC]{
\begin{minipage}[t]{0.25\textwidth}
\centering
\includegraphics[width=0.99\textwidth]{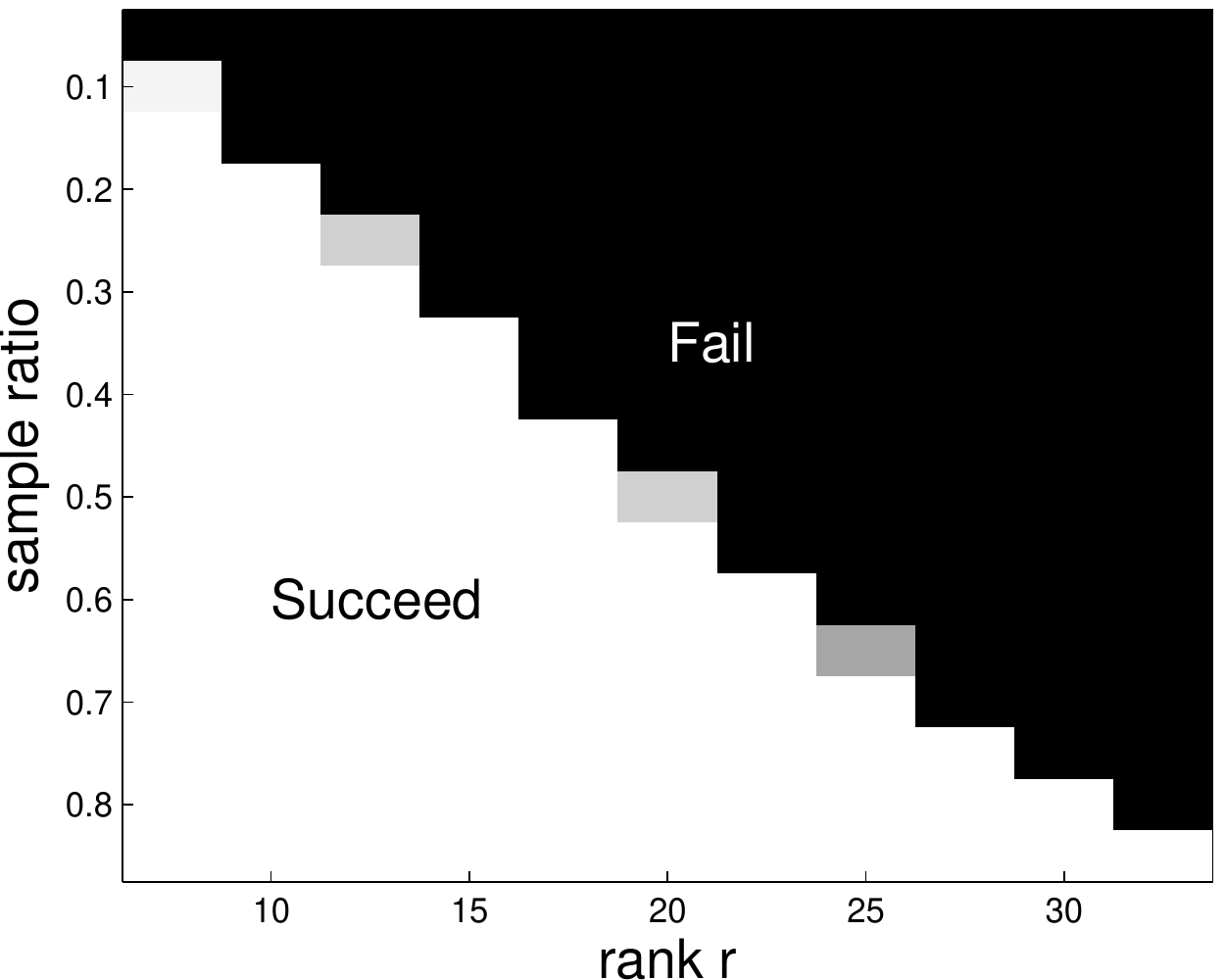}
\end{minipage}
}\\
\subfigure[TMac-fix]{
\begin{minipage}[t]{0.25\textwidth}
\centering
\includegraphics[width=0.99\textwidth]{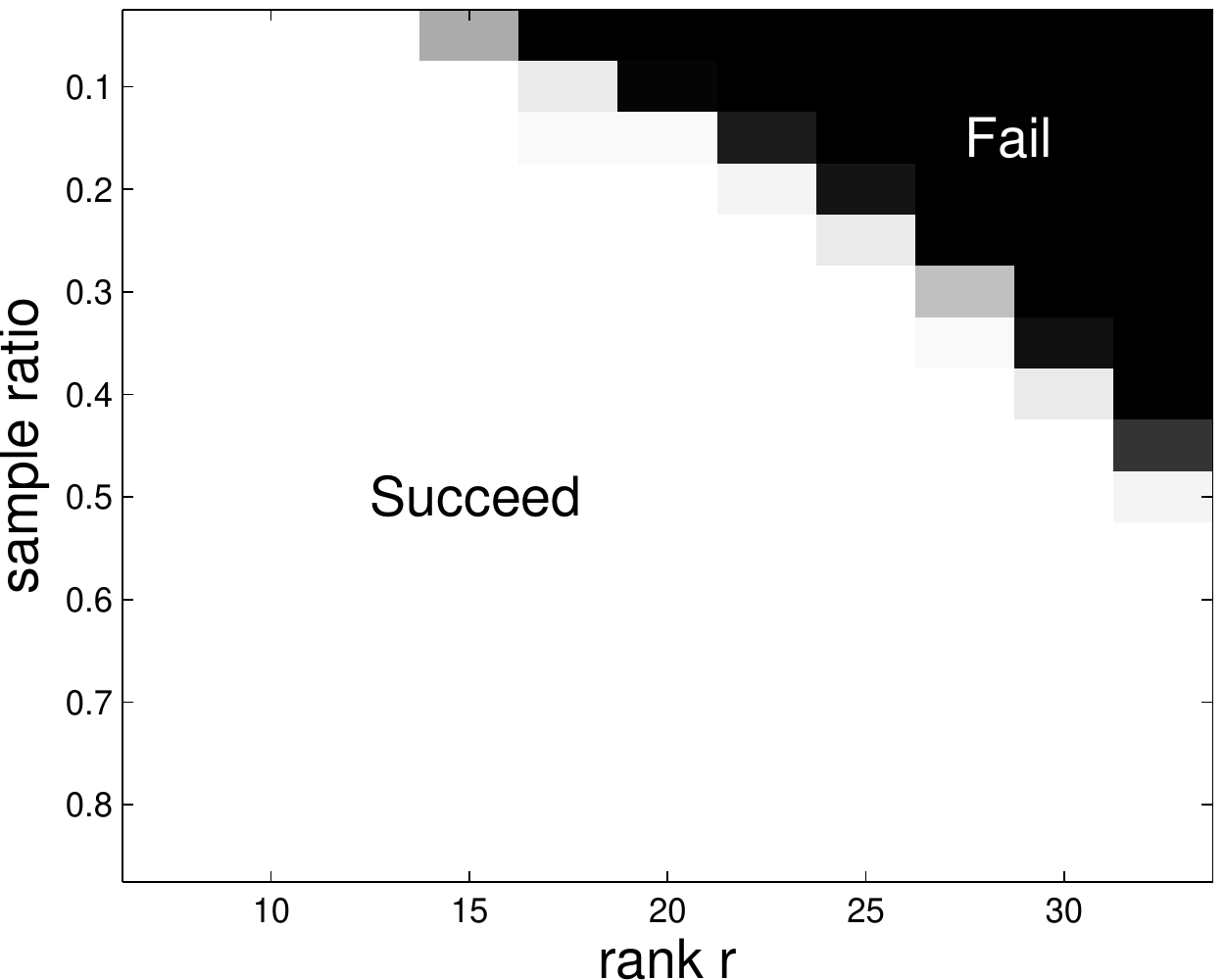}
\end{minipage}
}
\subfigure[TMac-inc]{
\begin{minipage}[t]{0.25\textwidth}
\centering
\includegraphics[width=0.99\textwidth]{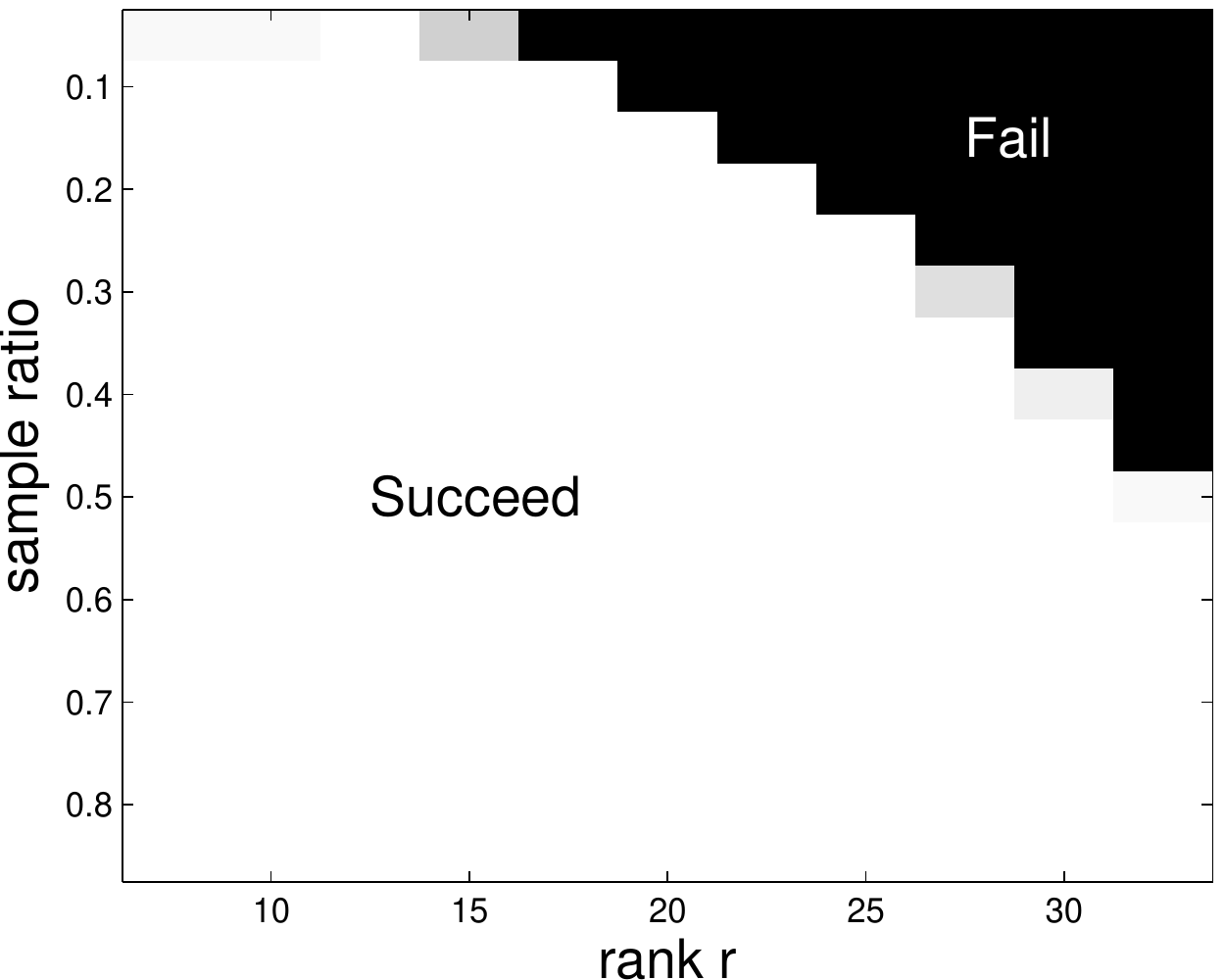}
\end{minipage}
}
\end{figure}

\begin{figure}[H]
\captionsetup{width=0.99\textwidth}
\caption{Phase transition plots for different methods on \textbf{4-way random low-rank tensors whose factors have power-law decaying singular values}. (a) SquareDeal: the matrix completion solver FPCA \cite{ma2011fixed} solves the model \eqref{eq:square} proposed in \cite{mu2013square}. (b) the matrix completion solver LMaFit \cite{wen2012lmafit} solves \eqref{eq:sqm-lmafit}, which is a non-convex variant of \eqref{eq:square}. (c) TMac-fix: Algorithm \ref{alg:als} solves \eqref{eq:main} with $\alpha_n=\frac{1}{4}$ and $r_n$ fixed to $r$, $\forall n$. (d) TMac-inc: Algorithm \ref{alg:als} solves \eqref{eq:main} with $\alpha_n=\frac{1}{4},\forall n$ and using rank-increasing strategy starting from $r_n=\text{round}(0.75r),\forall n$. }\label{fig:4way-plaw}
\centering
\subfigure[SquareDeal]{
\begin{minipage}[t]{0.25\textwidth}
\centering
\includegraphics[width=0.99\textwidth]{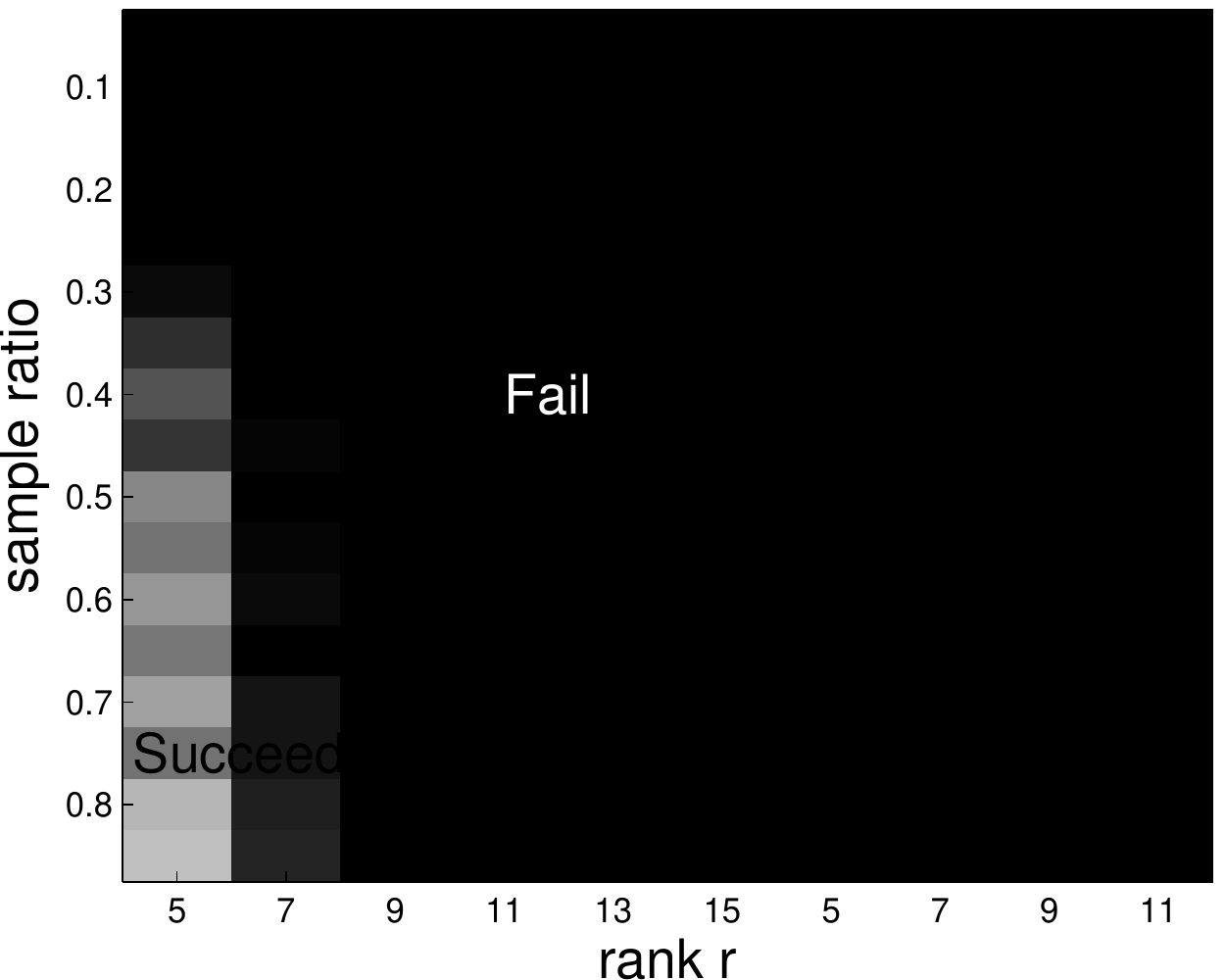}
\end{minipage}
}
\subfigure[\eqref{eq:sqm-lmafit} solved by LMaFit]{
\begin{minipage}[t]{0.25\textwidth}
\centering
\includegraphics[width=0.99\textwidth]{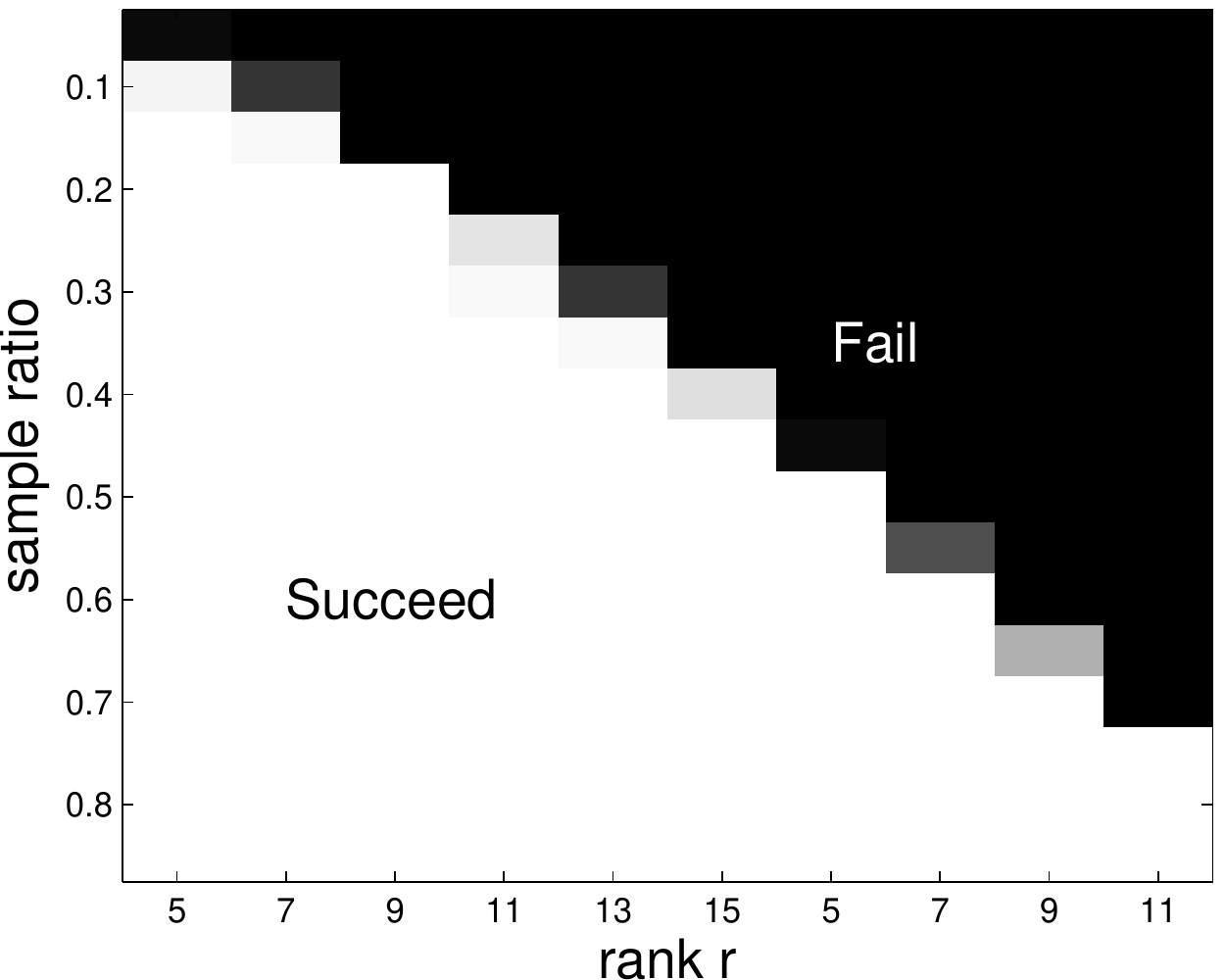}
\end{minipage}
}\\
\subfigure[TMac-fix]{
\begin{minipage}[t]{0.25\textwidth}
\centering
\includegraphics[width=0.99\textwidth]{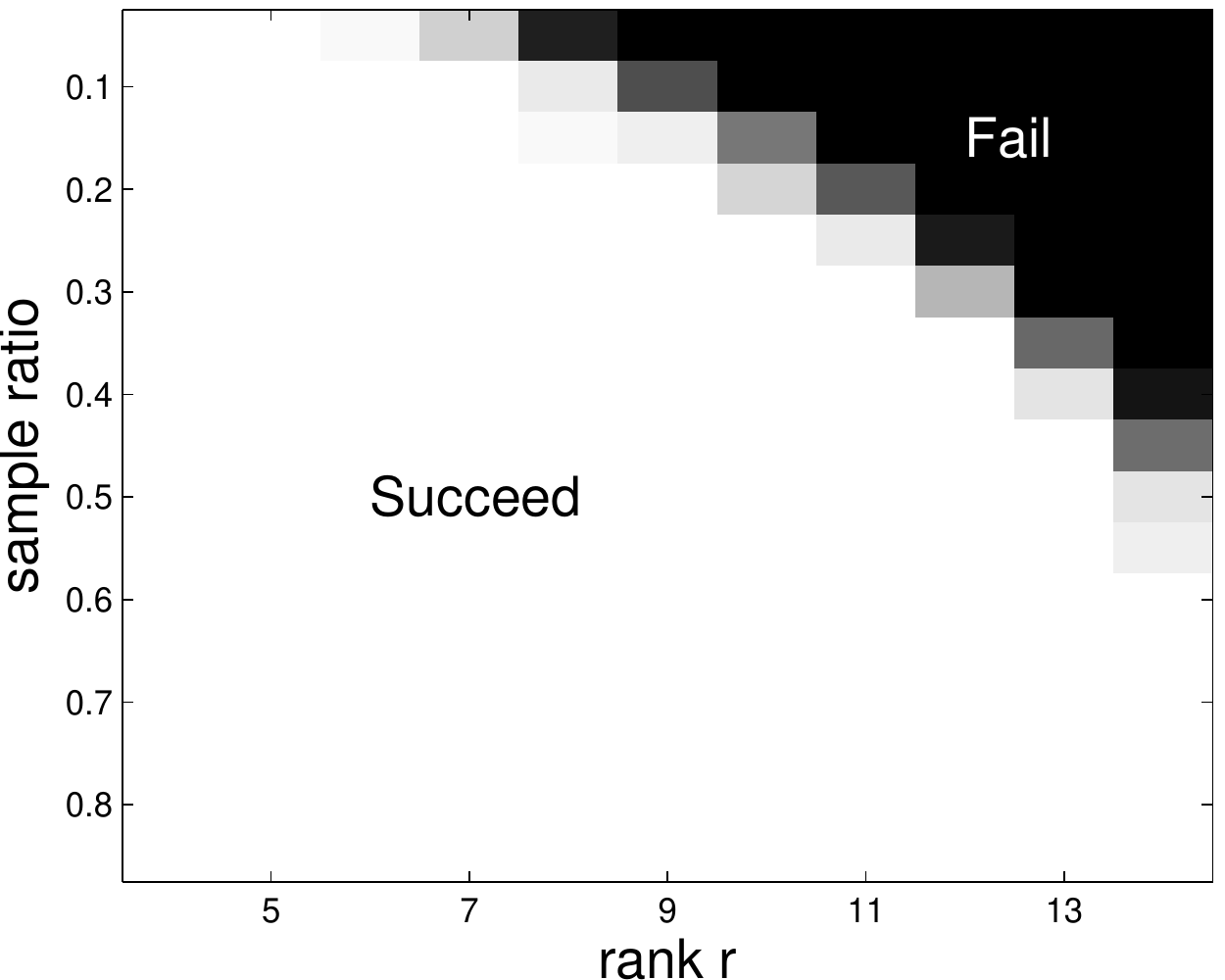}
\end{minipage}
}
\subfigure[TMac-inc]{
\begin{minipage}[t]{0.25\textwidth}
\centering
\includegraphics[width=0.99\textwidth]{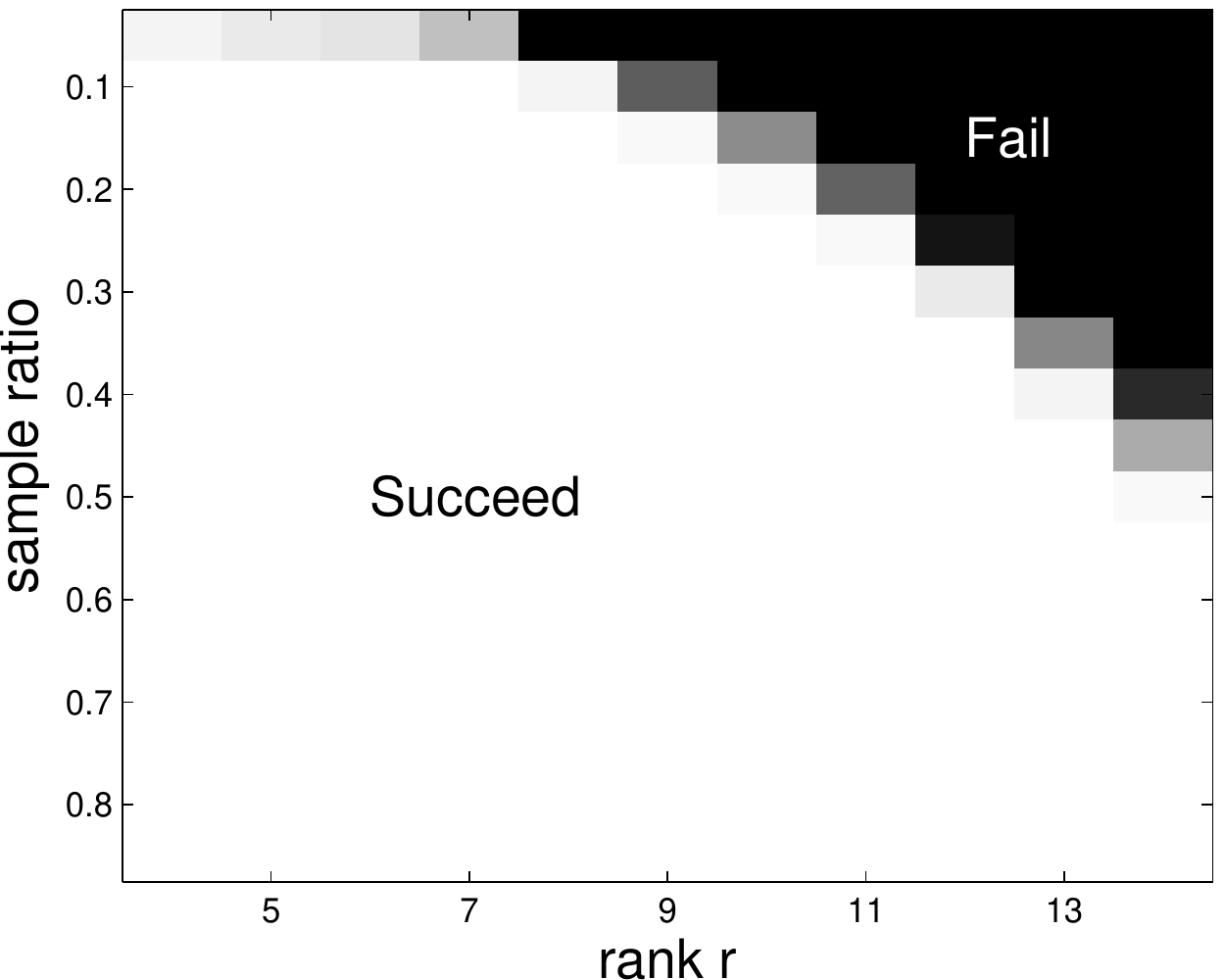}
\end{minipage}
}
\end{figure}

\begin{figure}[H]
\captionsetup{width=0.99\textwidth}
\caption{Convergence behavior of Algorithm \ref{alg:als} with two rank-adjusting strategies on Gaussian randomly generated $50\times50\times50$ tensors that have each mode rank to be $r$.}\label{fig:dec-inc}
\begin{center}
{\small
\begin{tabular}{cc}
true rank $r=10$ & true rank $r=25$\\%[-0.4cm]
\includegraphics[width=0.3\textwidth]{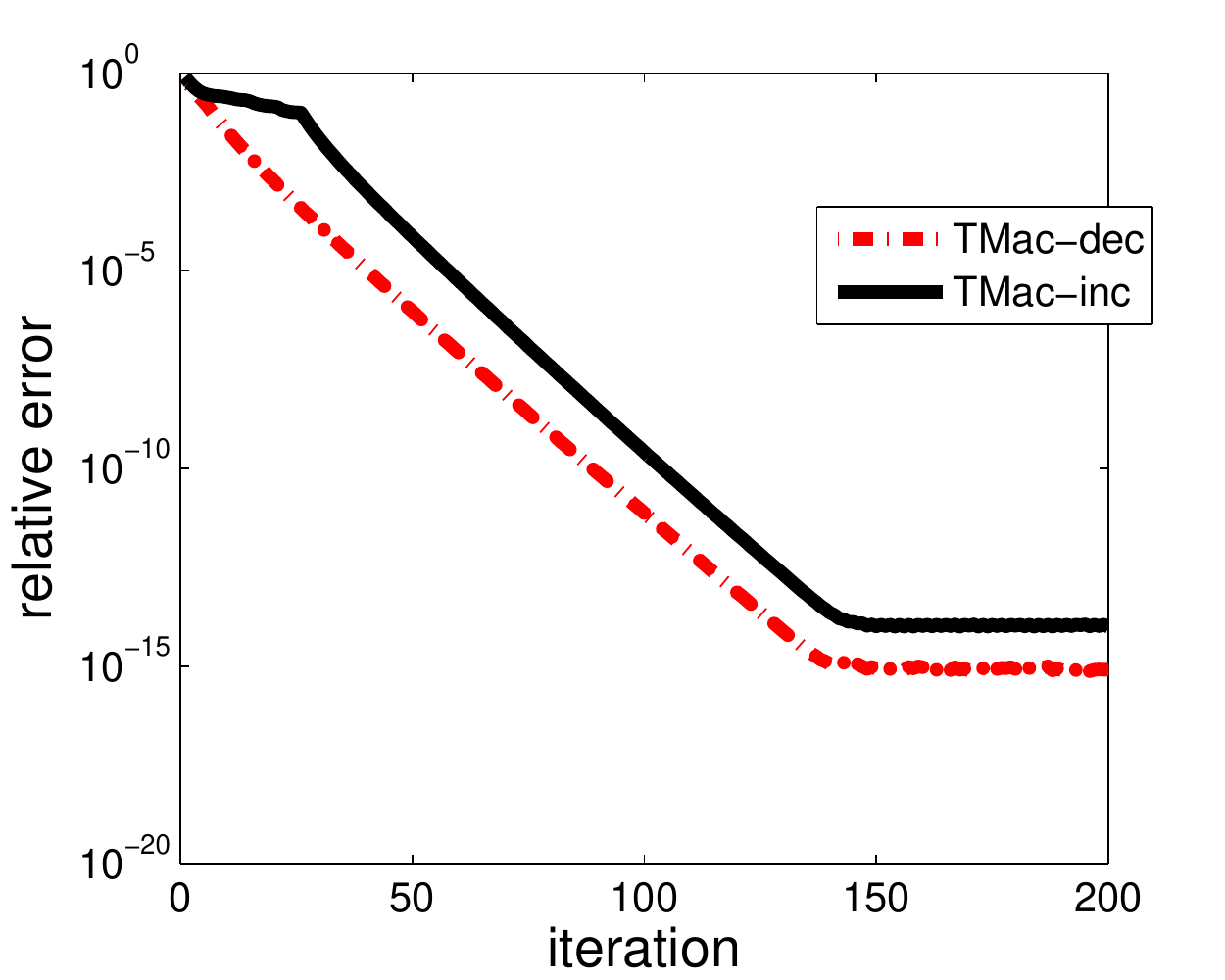}&
\includegraphics[width=0.3\textwidth]{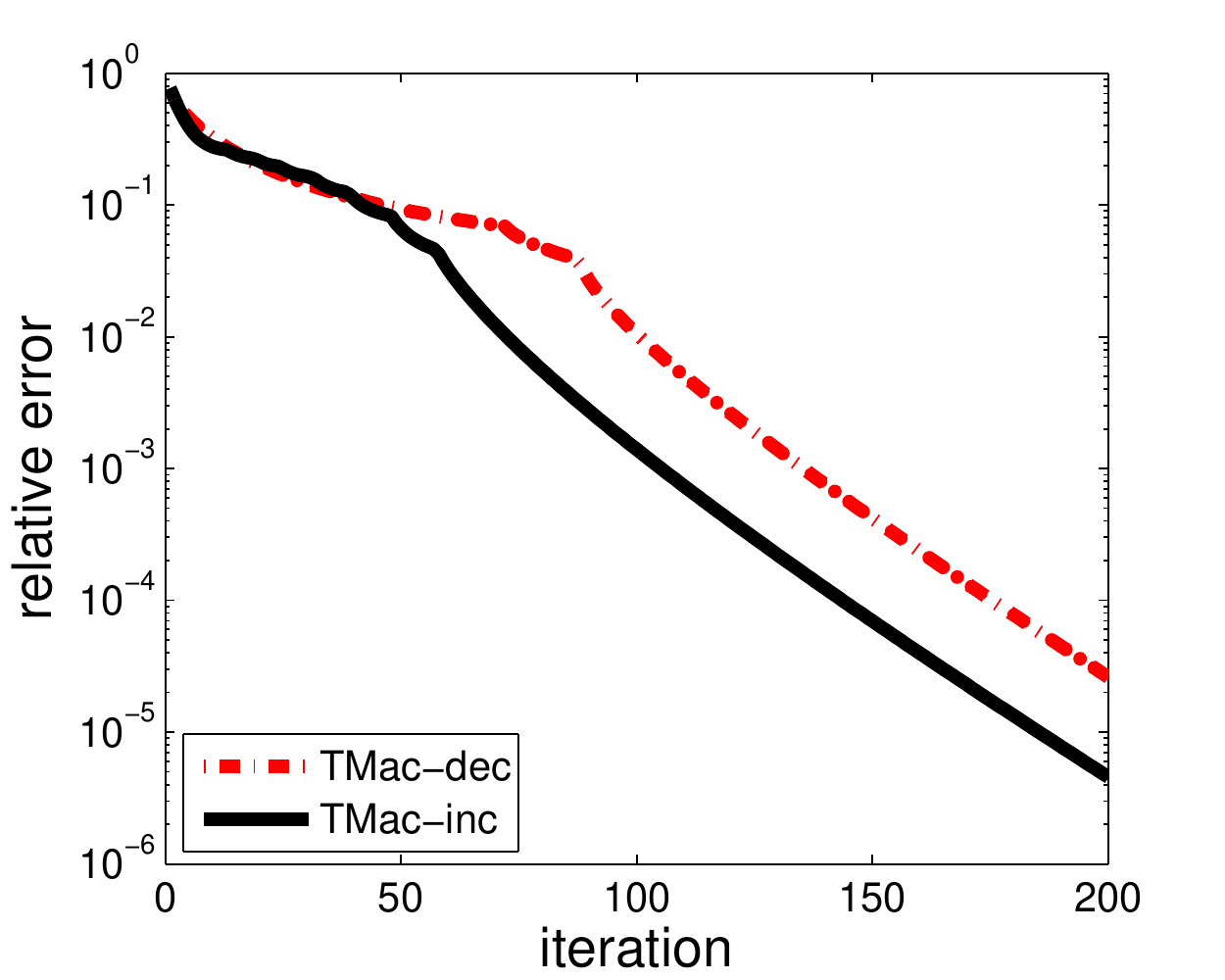}\\
\includegraphics[width=0.3\textwidth]{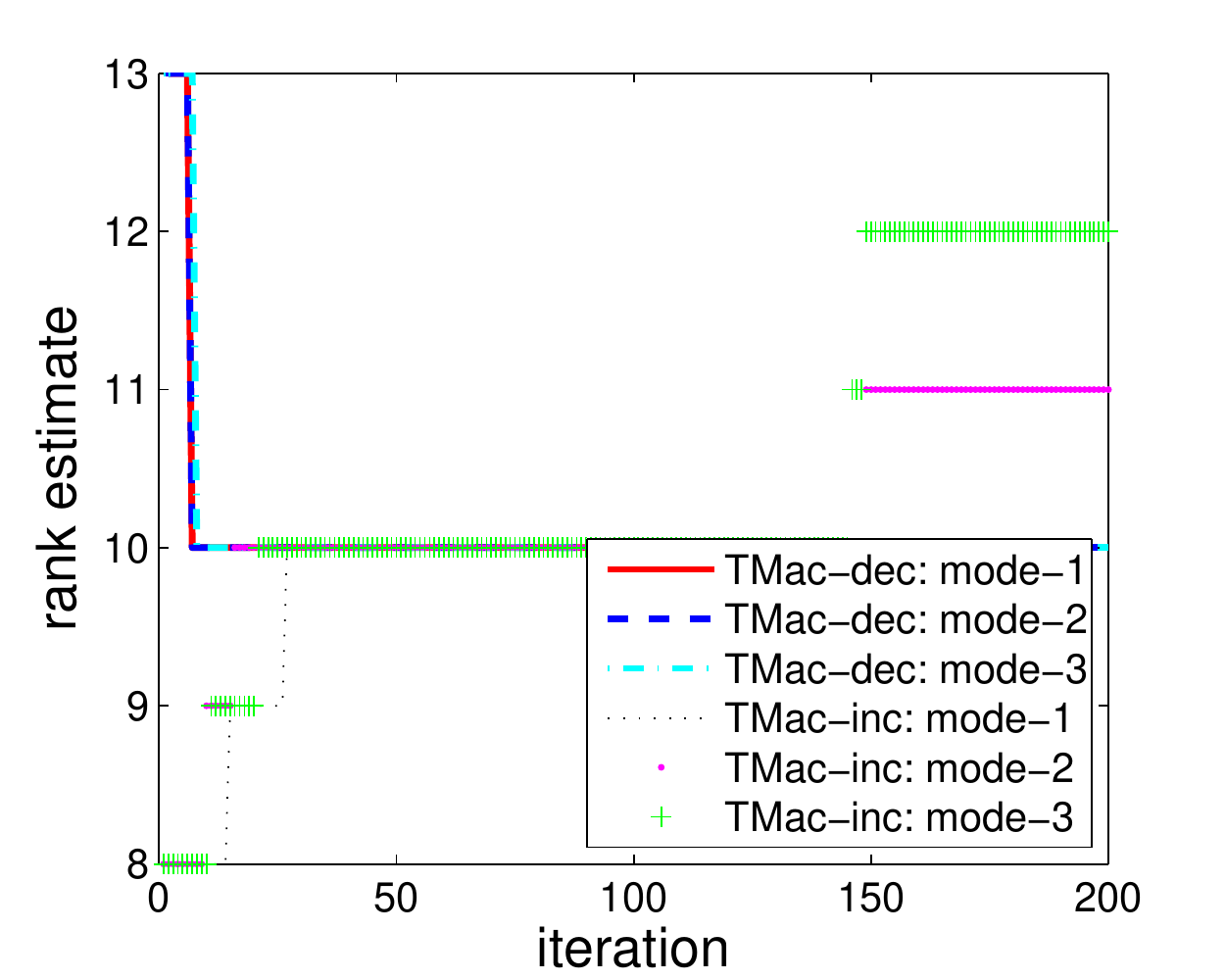}&
\includegraphics[width=0.3\textwidth]{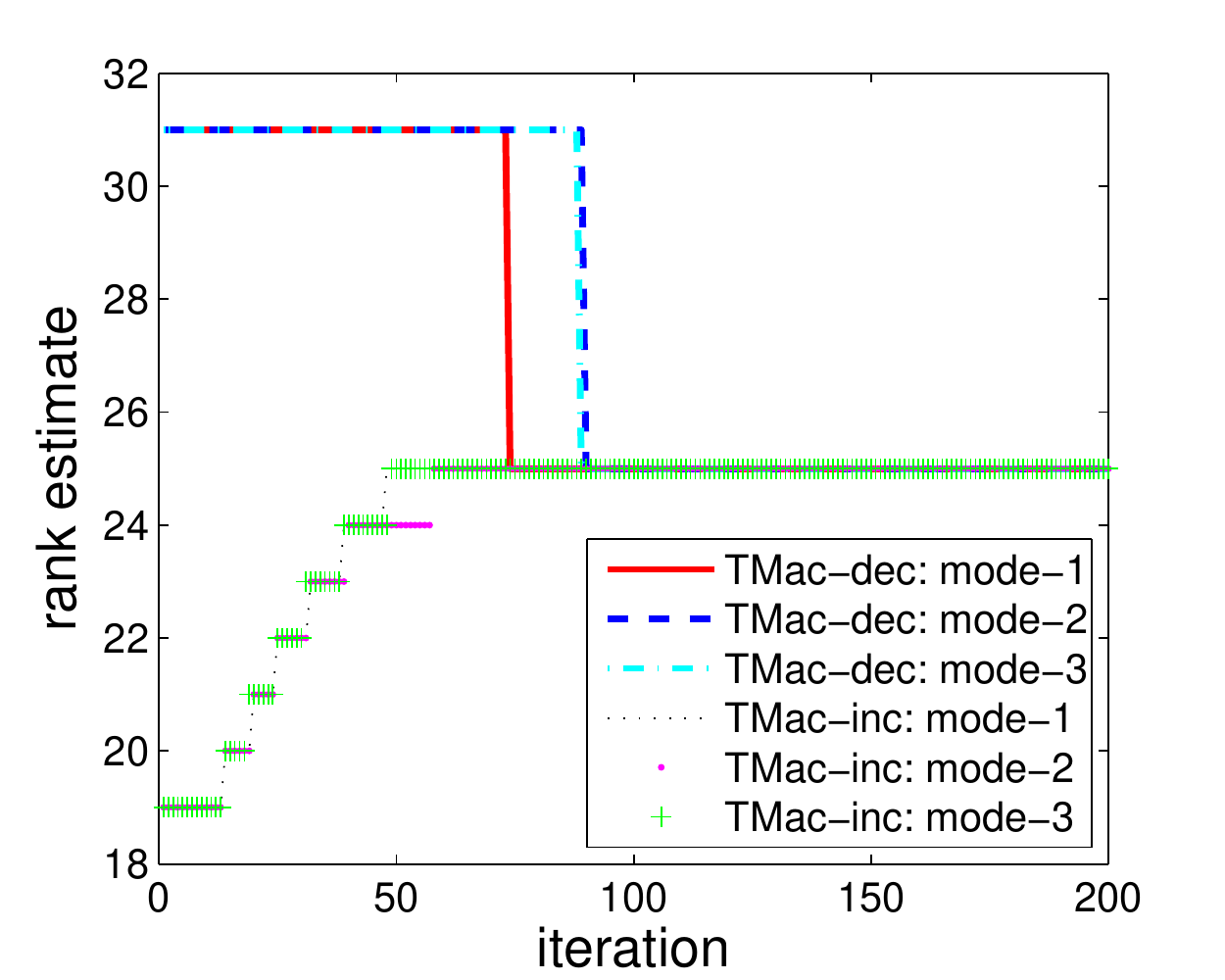}
\end{tabular}}
\end{center}
\end{figure}

\begin{figure}[H]
\captionsetup{width=0.99\textwidth}
\caption{Brain MRI images: one original slice, the corresponding slice with 90\% pixels missing and 5\% Gaussian noise, and the recovered slices by different tensor completion methods.}\label{fig:mri}
\begin{center}
{\small
\begin{tabular}{ccc}
Original & 90\% masked and 5\% noise & MatComp\\
\includegraphics[width=0.25\textwidth]{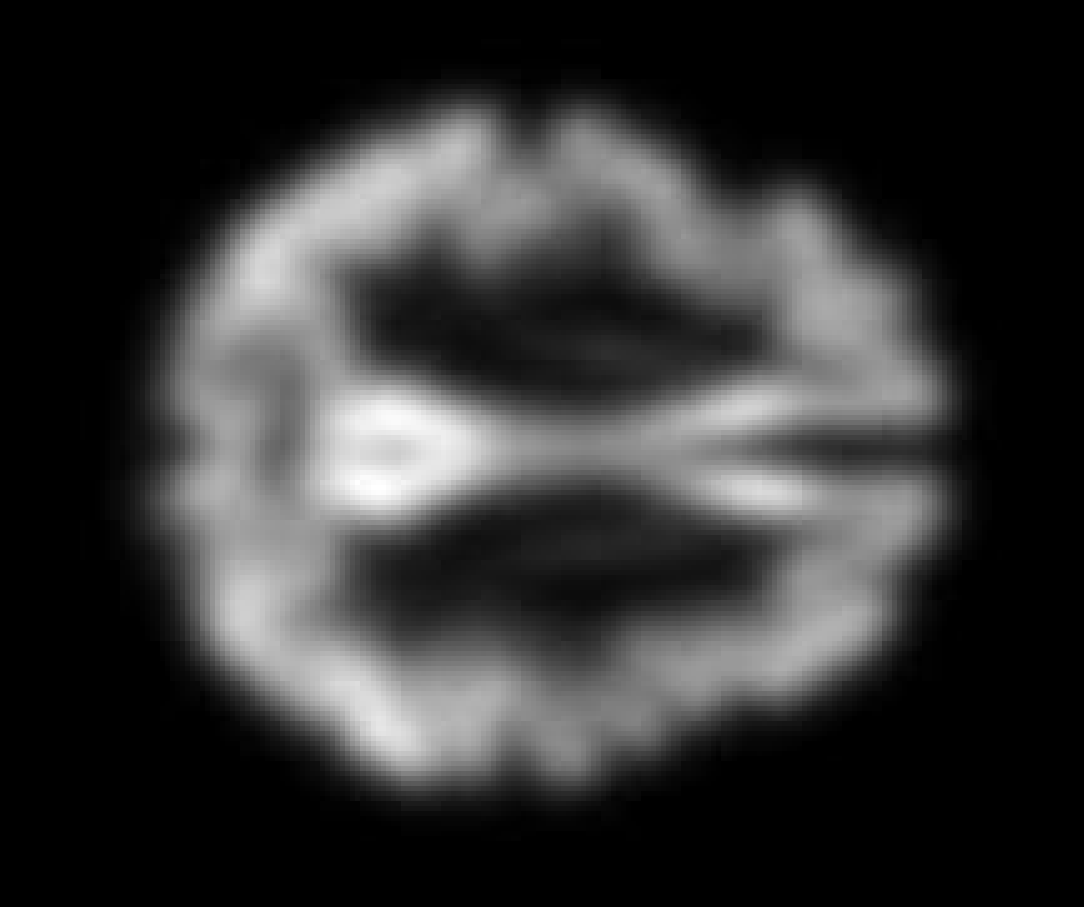}&
\includegraphics[width=0.25\textwidth]{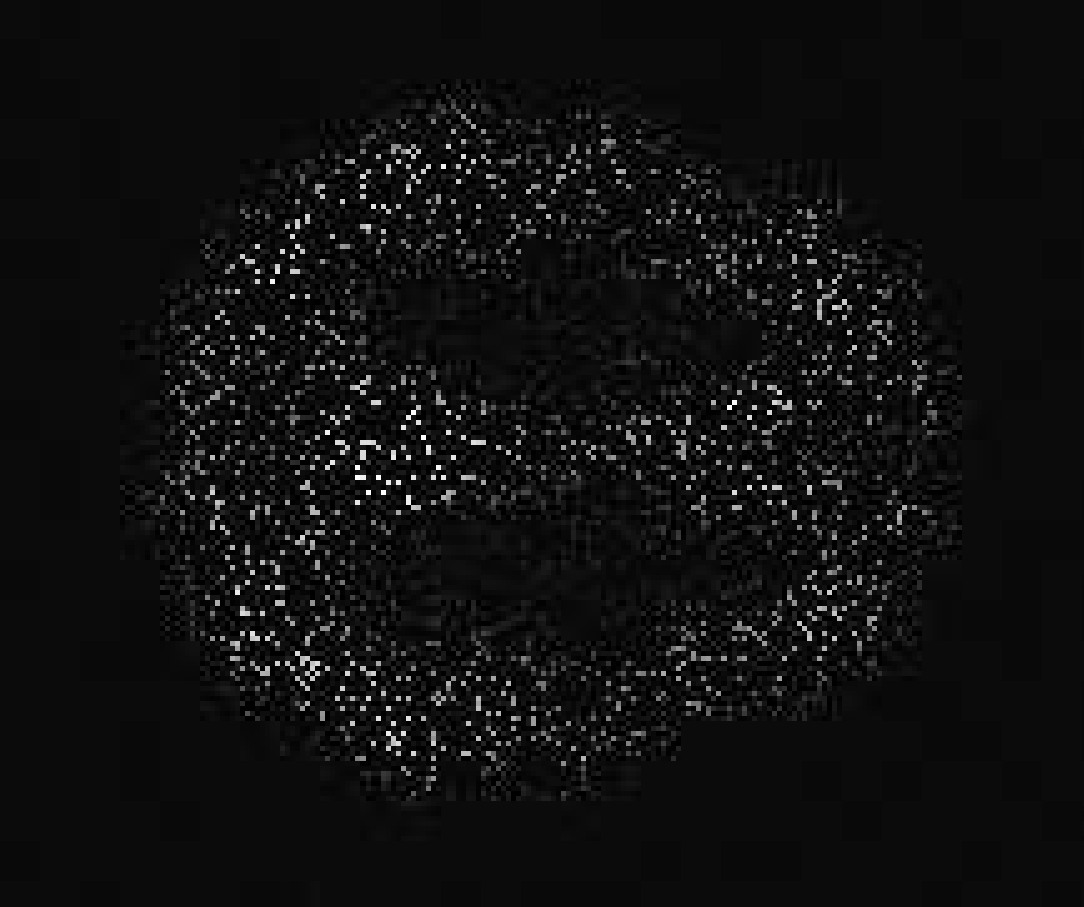}&
\includegraphics[width=0.25\textwidth]{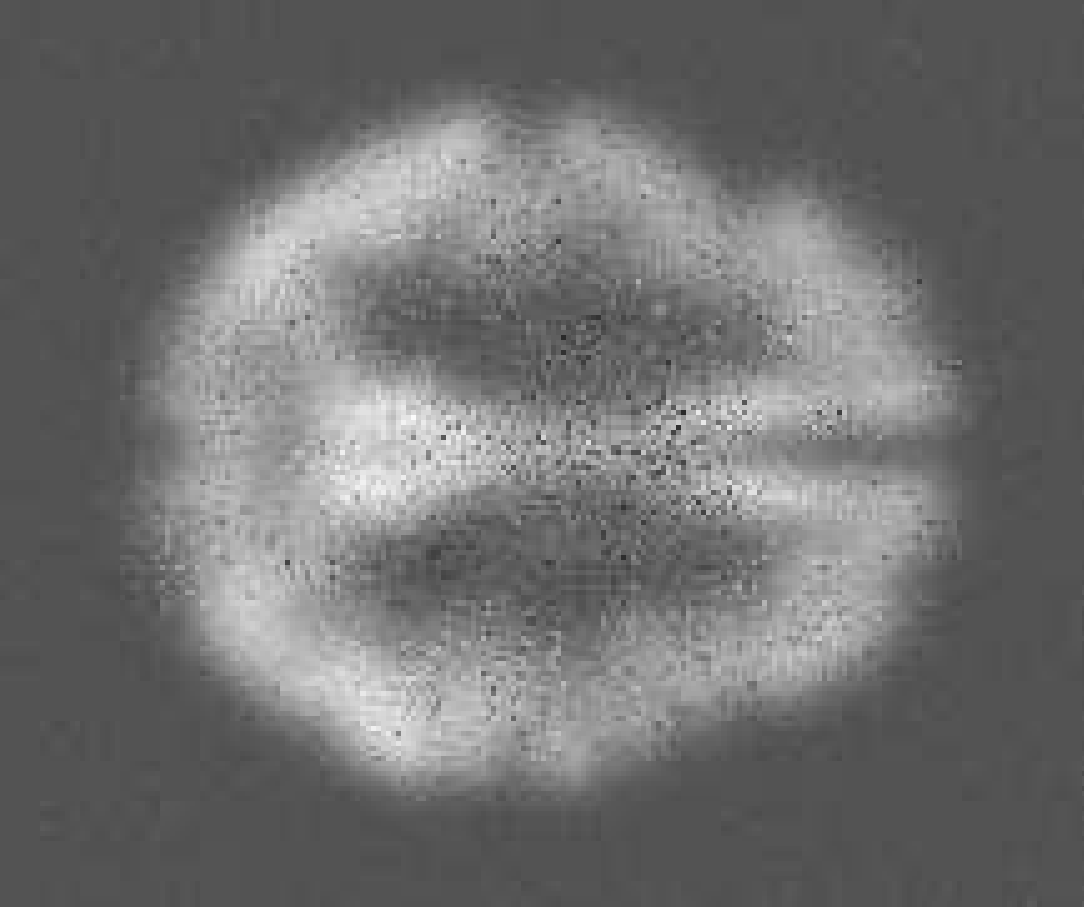}\\
FaLRTC & TMac with dynamic $\alpha_n$'s & TMac with fixed $\alpha_n$'s\\
\includegraphics[width=0.25\textwidth]{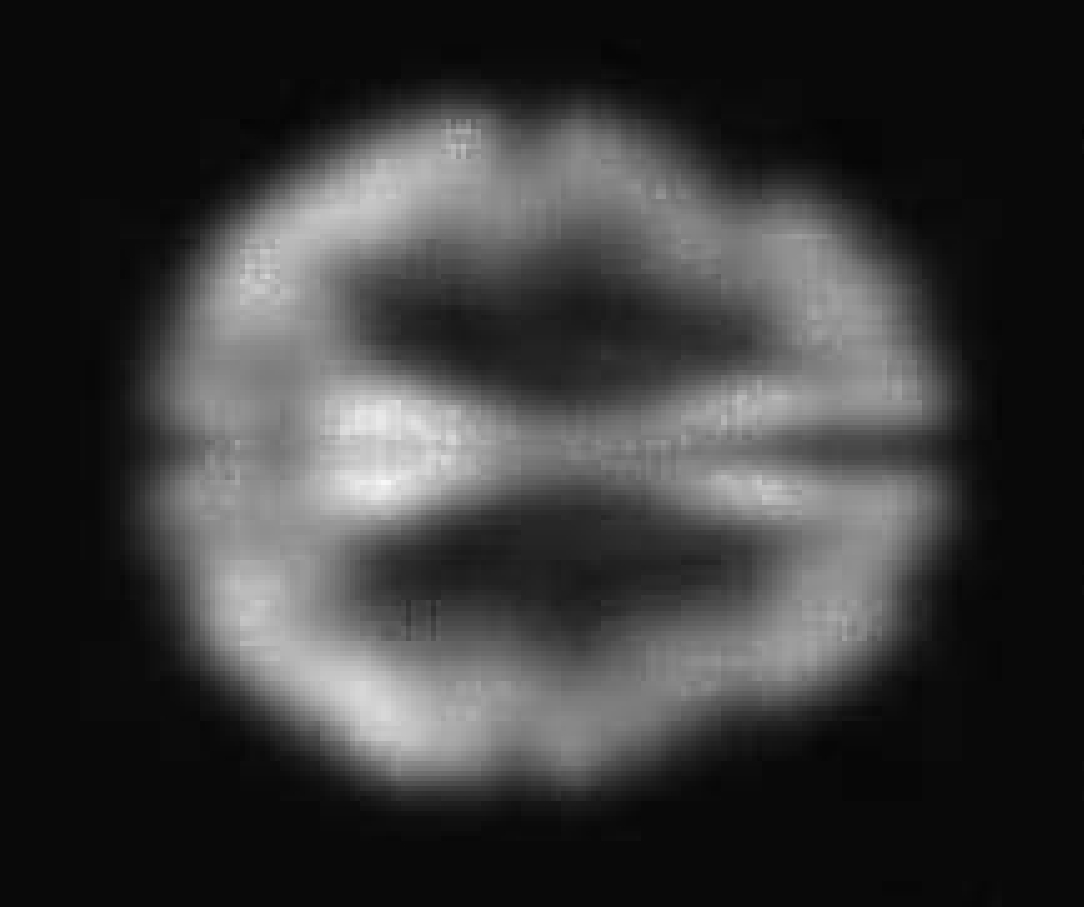}&
\includegraphics[width=0.25\textwidth]{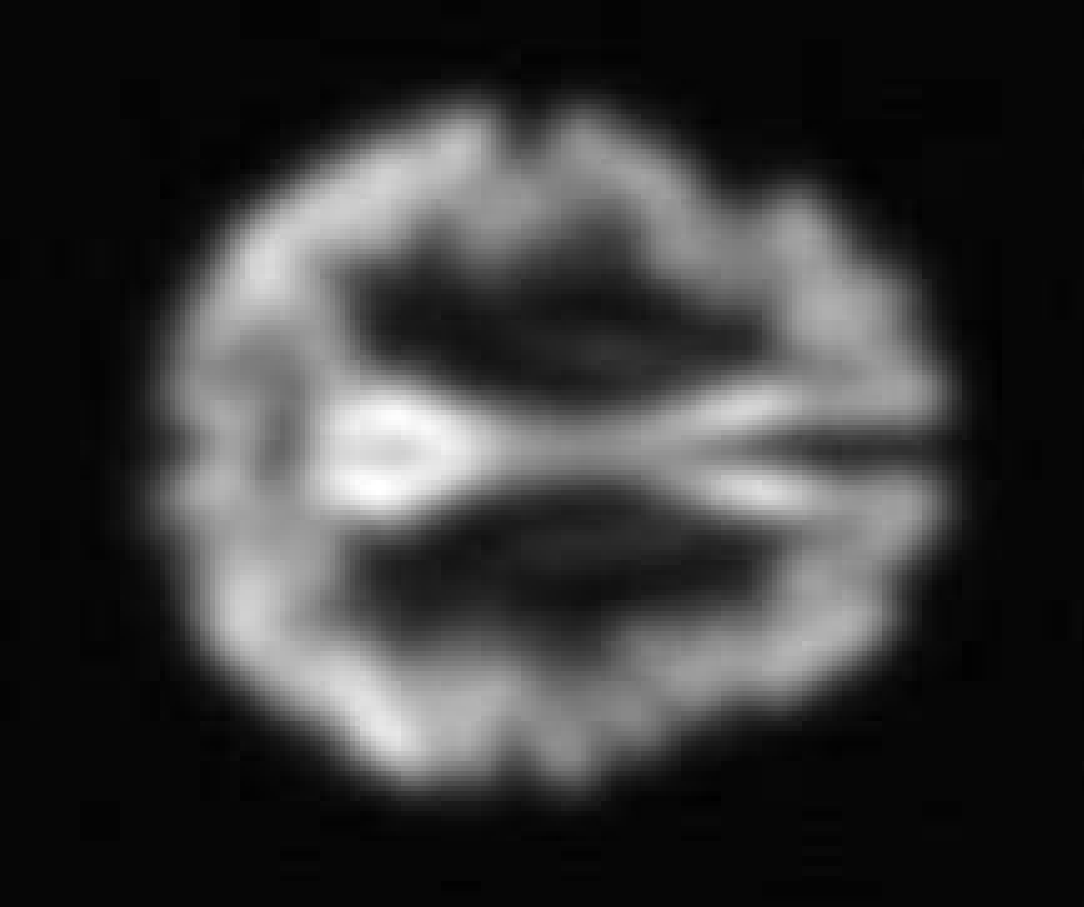}&
\includegraphics[width=0.25\textwidth]{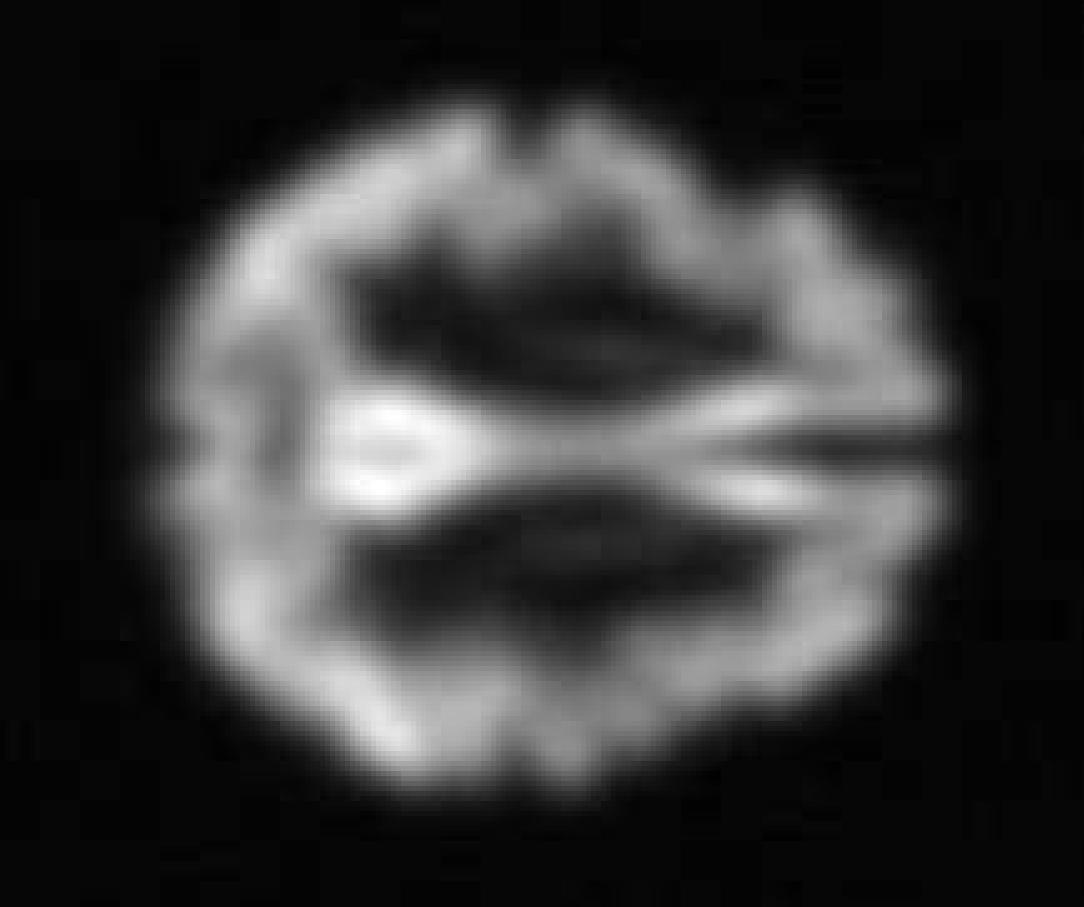}
\end{tabular}}
\end{center}
\end{figure}

\begin{table}[H]
\captionsetup{width=0.99\textwidth}
\caption{Brain MRI images: average results of 5 independent runs by different tensor completion methods for different settings of noise level $\sigma$'s and sample ratio SR's.}\label{table:mri}
\begin{center}{\small
\resizebox{\textwidth}{!}{\begin{tabular}{|c|cc|cc|cc|cc|}
\hline
 & \multicolumn{2}{|c|}{TMac with} & \multicolumn{2}{|c|}{TMac with} & \multicolumn{2}{|c|}{\multirow{2}{*}{MatComp}} & \multicolumn{2}{|c|}{\multirow{2}{*}{FaLRTC}}\\
& \multicolumn{2}{|c|}{dynamic $\alpha_n$'s} & \multicolumn{2}{|c|}{fixed $\alpha_n$'s} & \multicolumn{2}{|c|}{} & \multicolumn{2}{|c|}{}\\\hline
SR& relerr & time & relerr & time & relerr & time & relerr & time\\\hline
\multicolumn{9}{|c|}{noise level $\sigma=0$}\\\hline
10\% & 1.54e-03 & 1.79e+02 & 1.56e-03 & 1.82e+02 & 2.39e-01 & 2.40e+01 & 8.67e-02 & 2.65e+02\\
30\%& 1.14e-03 & 9.89e+01 & 1.15e-03 & 9.28e+01 & 2.13e-02 & 1.98e+01 & 9.59e-03 & 2.47e+02\\
50\%& 8.55e-04 & 8.11e+01 & 1.00e-03 & 7.63e+01 & 3.19e-03 & 1.84e+01 & 7.34e-03 & 2.01e+02\\\hline
\multicolumn{9}{|c|}{noise level $\sigma = 0.05$}\\\hline
10\% & 2.15e-02 & 1.39e+02 & 2.15e-02 & 1.43e+02 & 2.55e-01 & 3.00e+01 & 1.15e-01 & 2.96e+02\\
30\% & 1.67e-02 & 9.04e+01 & 1.66e-02 & 8.71e+01 & 8.10e-02 & 3.12e+01 & 3.86e-02 & 1.43e+02\\
50\% & 1.62e-02 & 8.11e+01 & 1.61e-02 & 7.84e+01 & 4.35e-02 & 2.26e+01 & 3.66e-02 & 1.36e+02\\\hline
\multicolumn{9}{|c|}{noise level $\sigma = 0.10$}\\\hline
10\% & 4.34e-02 & 1.26e+02 & 4.34e-02 & 1.30e+02 & 3.00e-01 & 3.33e+01 & 1.48e-01 & 2.46e+02\\
30\% & 3.37e-02 & 7.69e+01 & 3.33e-02 & 7.81e+01 & 1.66e-01 & 3.16e+01 & 7.19e-02 & 1.05e+02\\
50\% & 3.25e-02 & 7.22e+01 & 3.33e-02 & 7.81e+01 & 8.61e-02 & 2.12e+01 & 7.17e-02 & 1.01e+02\\\hline
\end{tabular}}}
\end{center}
\end{table}

\begin{figure}[H]
\captionsetup{width=0.99\textwidth}
\caption{Hyperspectral images: one original slice, the corresponding slice with 90\% pixels missing and 5\% Gaussian noise, and the recovered slices by different tensor completion methods.}\label{fig:hyp}
\begin{center}
{\small
\begin{tabular}{ccc}
Original & 90\% masked and 5\% noise & MatComp\\
\includegraphics[width=0.25\textwidth]{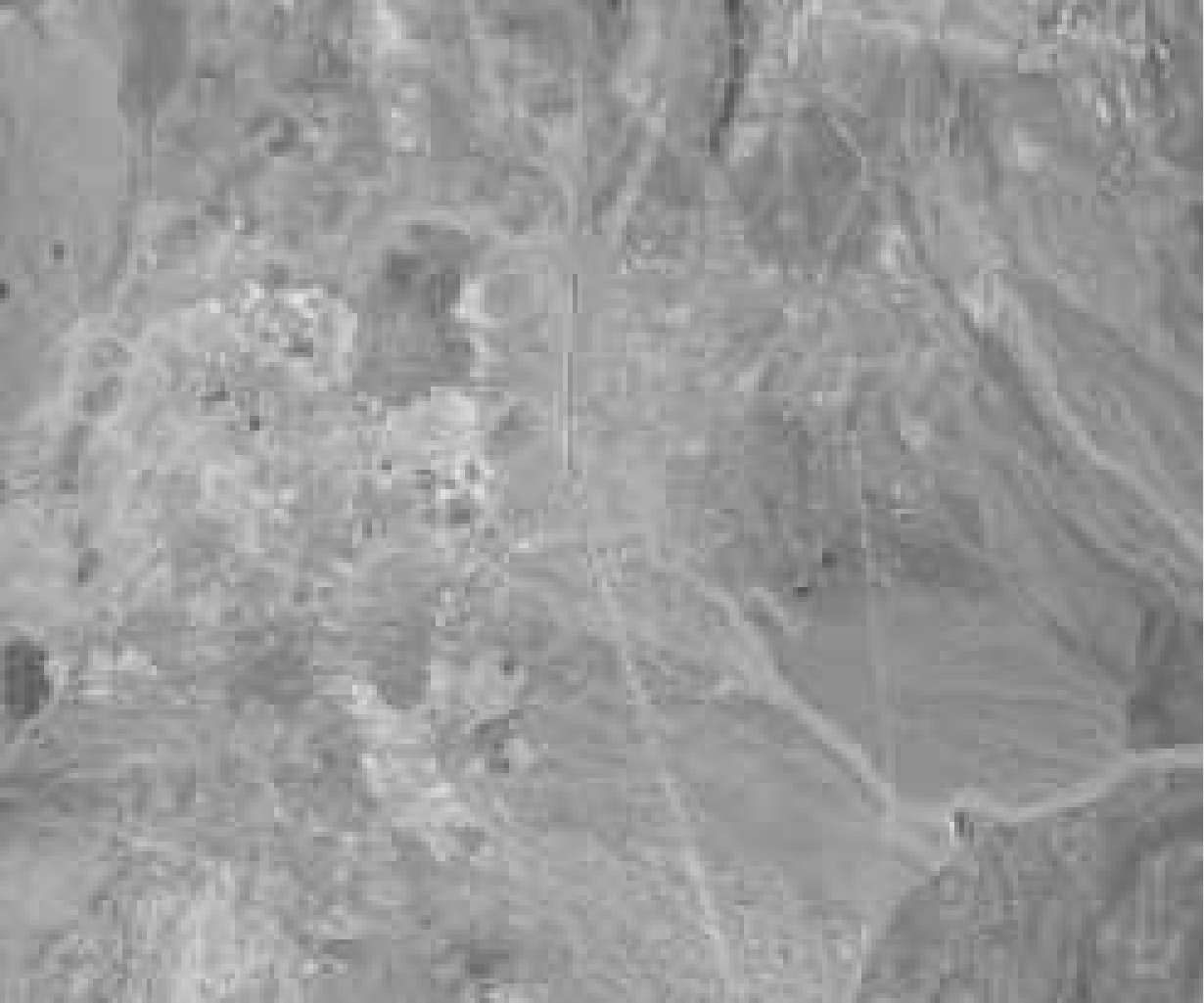}&
\includegraphics[width=0.25\textwidth]{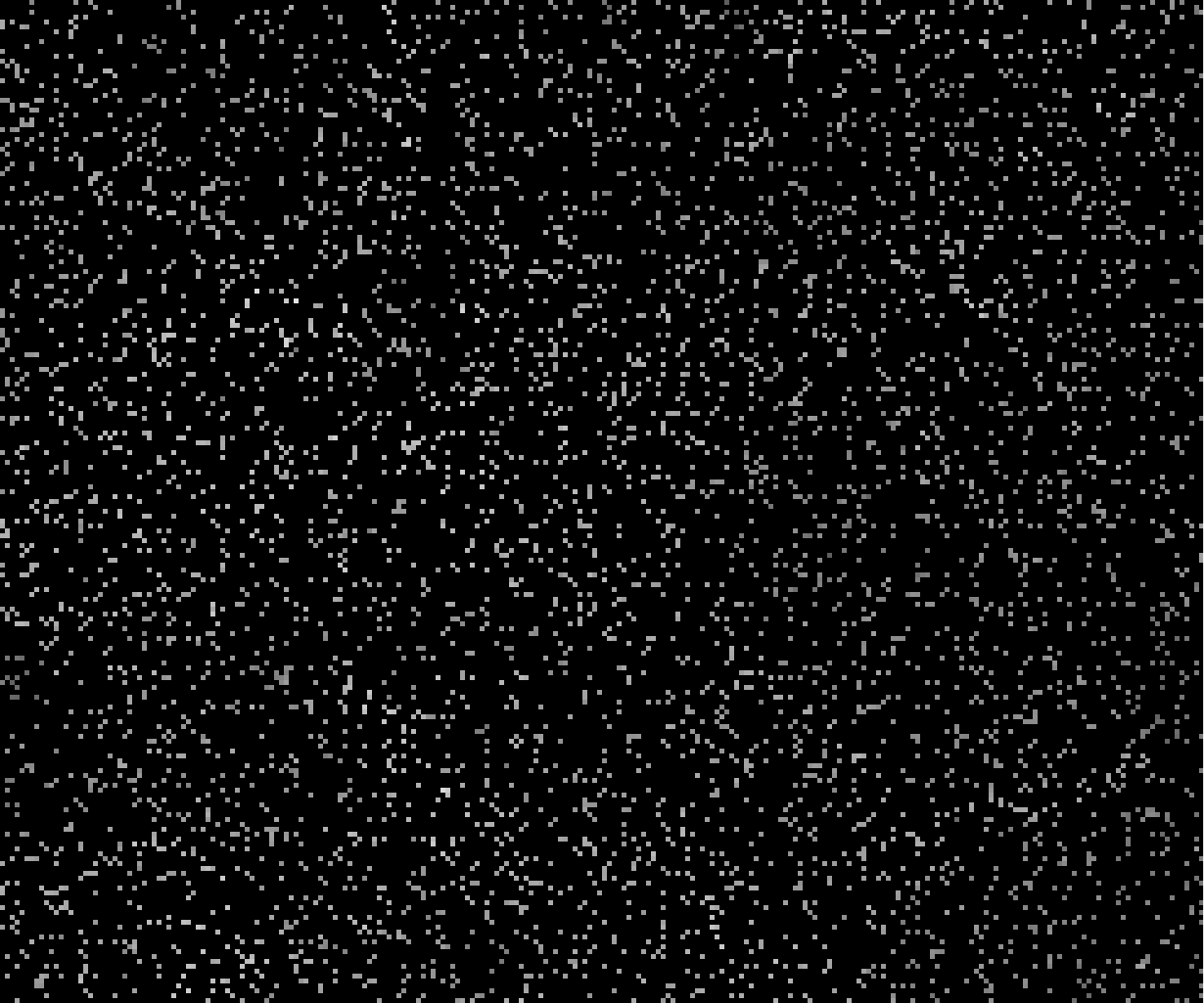}&
\includegraphics[width=0.25\textwidth]{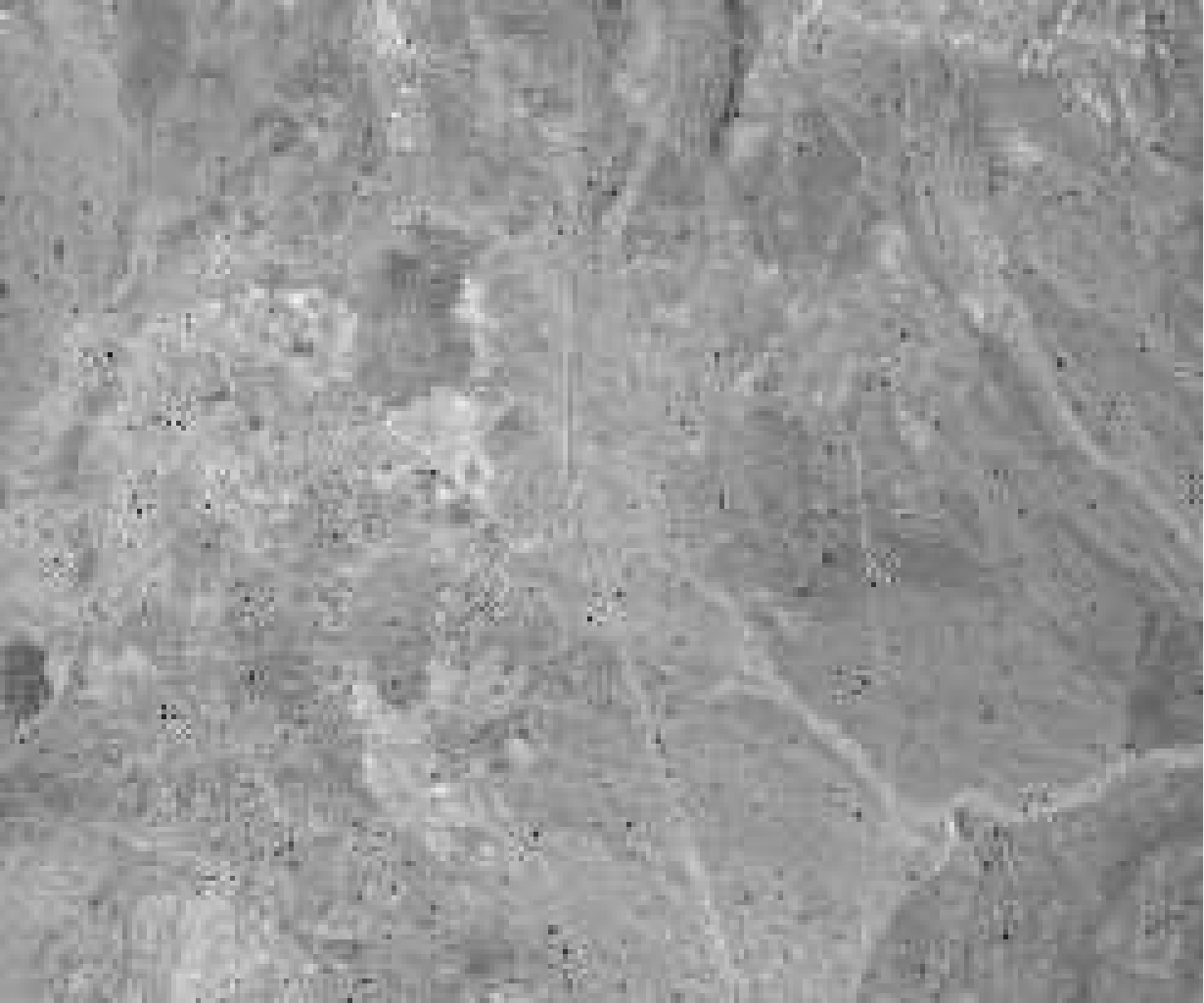}\\
FaLRTC & TMac with dynamic $\alpha_n$'s & TMac with fixed $\alpha_n$'s\\
\includegraphics[width=0.25\textwidth]{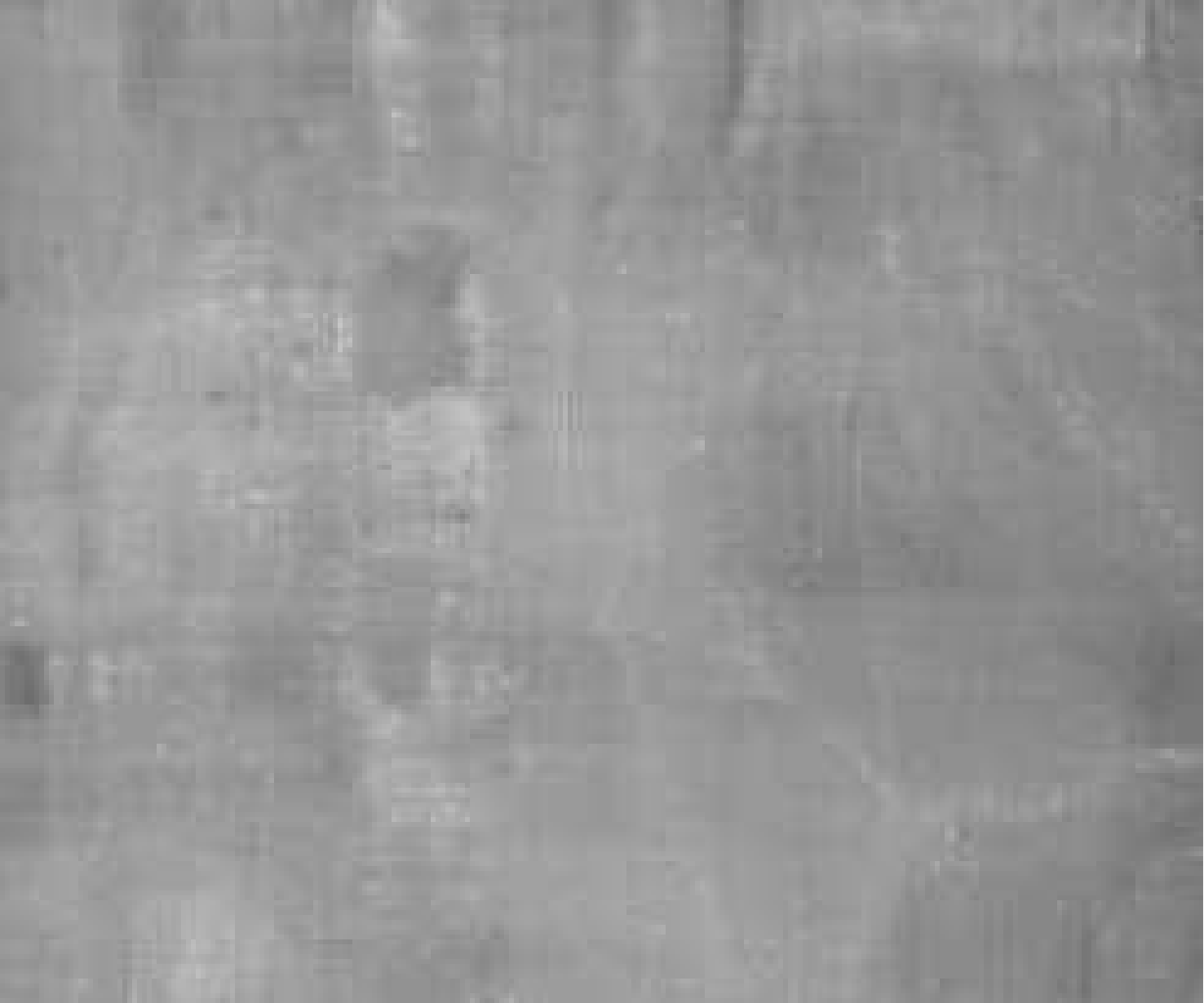}&
\includegraphics[width=0.25\textwidth]{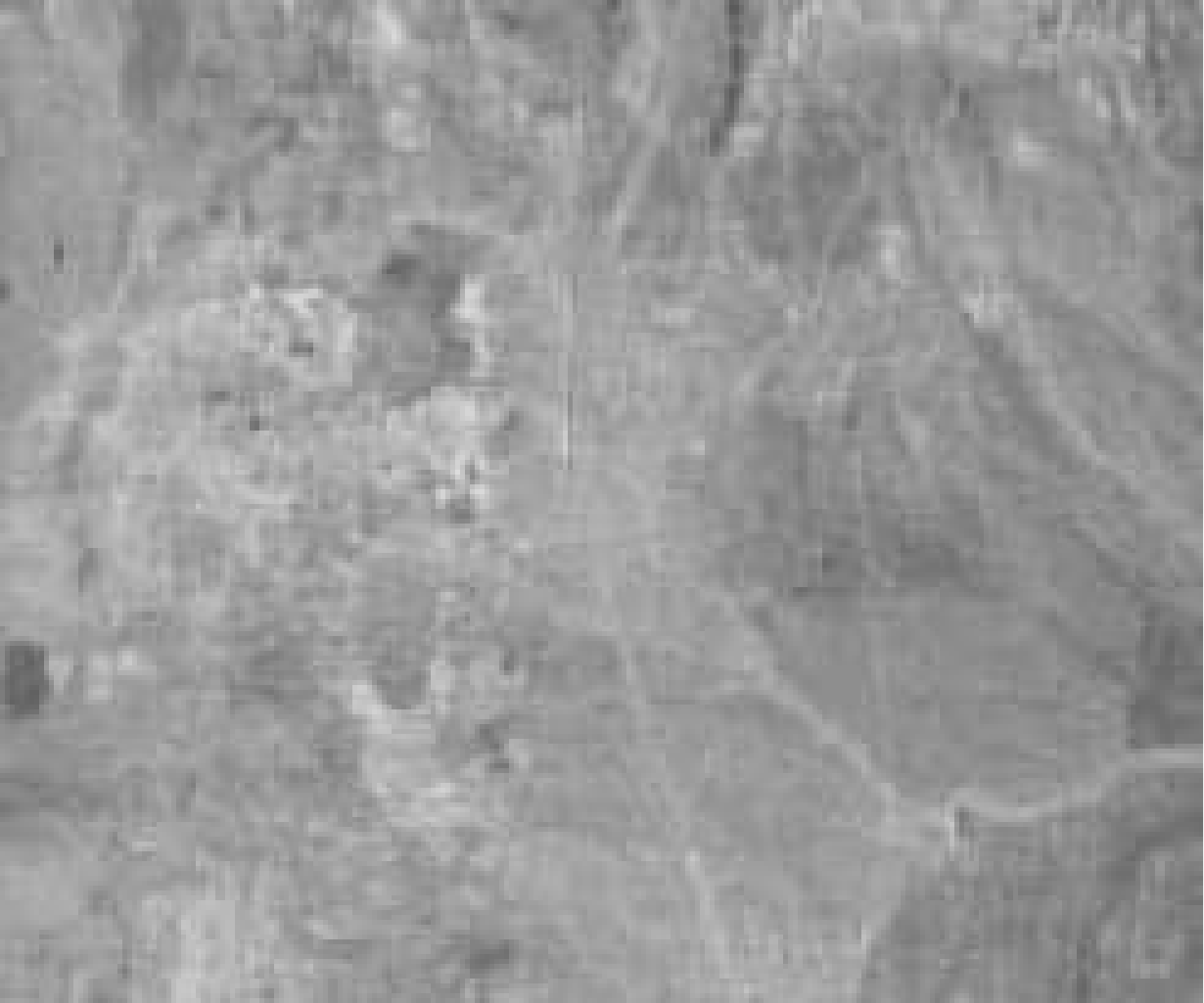}&
\includegraphics[width=0.25\textwidth]{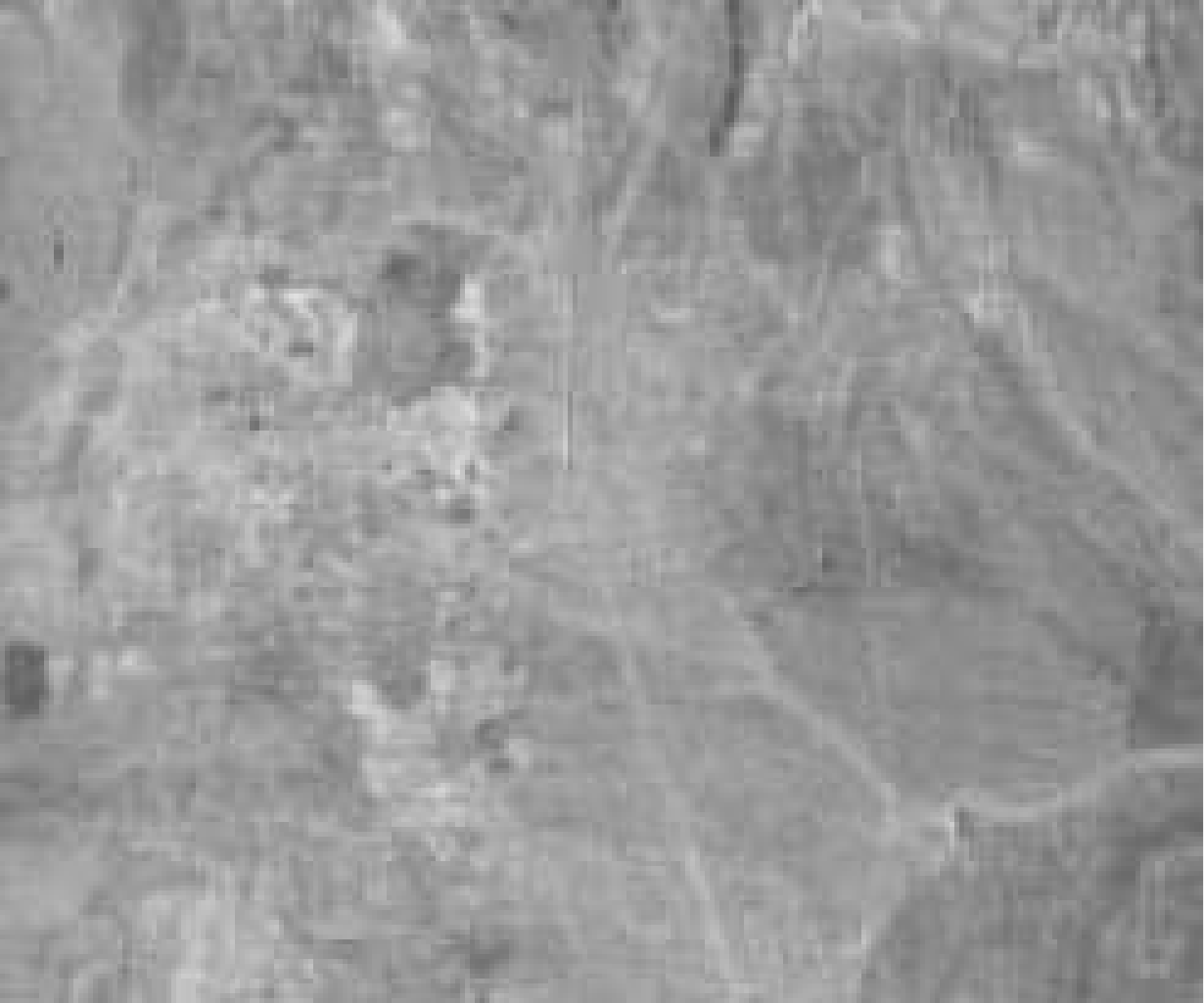}
\end{tabular}}
\end{center}
\end{figure}

\begin{table}[H]
\captionsetup{width=0.99\textwidth}
\caption{Hyperspectral images: average results of 5 independent runs by different tensor completion methods for different settings of noise level $\sigma$'s and sample ratio SR's.}\label{table:hyp}
\begin{center}{\small
\resizebox{\textwidth}{!}{\begin{tabular}{|c|cc|cc|cc|cc|}
\hline
 & \multicolumn{2}{|c|}{TMac with} & \multicolumn{2}{|c|}{TMac with} & \multicolumn{2}{|c|}{\multirow{2}{*}{MatComp}} & \multicolumn{2}{|c|}{\multirow{2}{*}{FaLRTC}}\\
& \multicolumn{2}{|c|}{dynamic $\alpha_n$'s} & \multicolumn{2}{|c|}{fixed $\alpha_n$'s} & \multicolumn{2}{|c|}{} & \multicolumn{2}{|c|}{}\\\hline
SR& relerr & time & relerr & time & relerr & time & relerr & time\\\hline
\multicolumn{9}{|c|}{noise level $\sigma=0$}\\\hline
10\% & 4.00e-02 & 8.88e+01 & 3.91e-02 & 8.28e+01 & 3.15e-01 & 1.22e+01 & 6.48e-02 & 1.70e+02\\
30\% & 2.40e-02 & 5.16e+01 & 3.08e-02 & 5.55e+01 & 6.25e-02 & 1.86e+01 & 3.10e-02 & 1.56e+02\\
50\% & 6.35e-03 & 4.54e+01 & 2.73e-02 & 5.71e+01 & 1.71e-02 & 2.68e+01 & 1.68e-02 & 1.64e+02\\\hline
\multicolumn{9}{|c|}{noise level $\sigma = 0.05$}\\\hline
10\% & 4.14e-02 & 9.40e+01 & 4.12e-02 & 8.91e+01 & 3.16e-01 & 1.34e+01 & 6.65e-02 & 1.61e+02\\
30\% & 2.98e-02 & 5.76e+01 & 3.43e-02 & 6.08e+01 & 7.65e-02 & 2.71e+01 & 3.52e-02 & 1.53e+02\\
50\% & 2.25e-02 & 5.50e+01 & 3.01e-02 & 5.93e+01 & 4.14e-02 & 3.87e+01 & 2.45e-02 & 1.38e+02\\\hline
\multicolumn{9}{|c|}{noise level $\sigma = 0.10$}\\\hline
10\% & 4.52e-02 & 9.18e+01 & 4.53e-02 & 9.04e+01 & 3.25e-01 & 1.47e+01 & 6.99e-02 & 1.60e+02\\
30\% & 3.53e-02 & 5.63e+01 & 3.77e-02 & 6.31e+01 & 1.00e-01 & 4.07e+01 & 4.35e-02 & 1.35e+02\\
50\% & 3.23e-02 & 5.49e+01 & 3.41e-02 & 5.10e+01 & 8.03e-02 & 4.08e+01 & 3.62e-02 & 1.23e+02\\\hline
\end{tabular}}}
\end{center}
\end{table}

\begin{figure}[H]
\captionsetup{width=0.99\textwidth}
\caption{Grayscale video: one original frame, the corresponding frame with 70\% pixels missing and 5\% Gaussian noise, and the recovered frames by different tensor completion methods.}\label{fig:grayvideo}
\begin{center}
{\small
\begin{tabular}{ccc}
Original & 70\% masked and 5\% noise & MatComp\\
\includegraphics[width=0.25\textwidth]{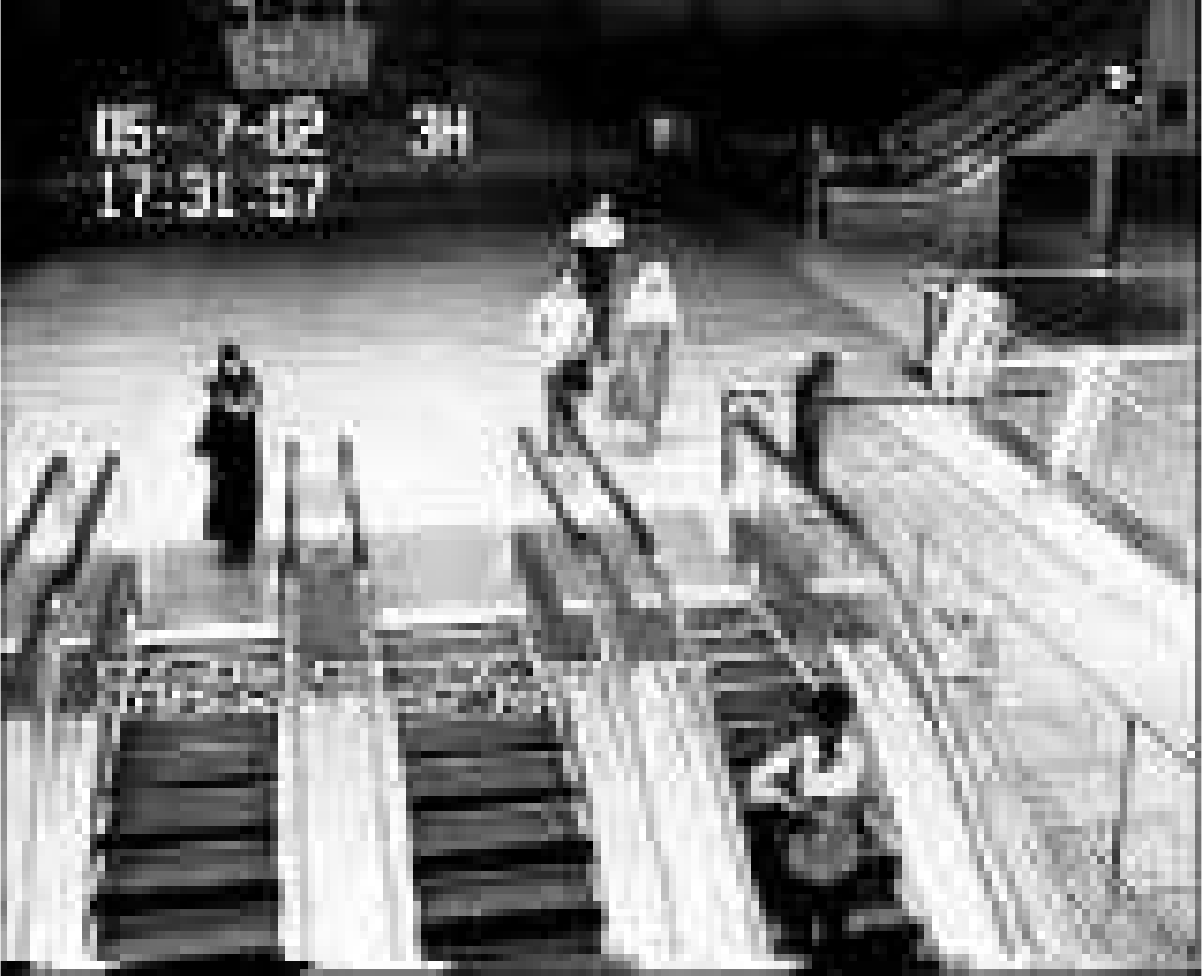}&
\includegraphics[width=0.25\textwidth]{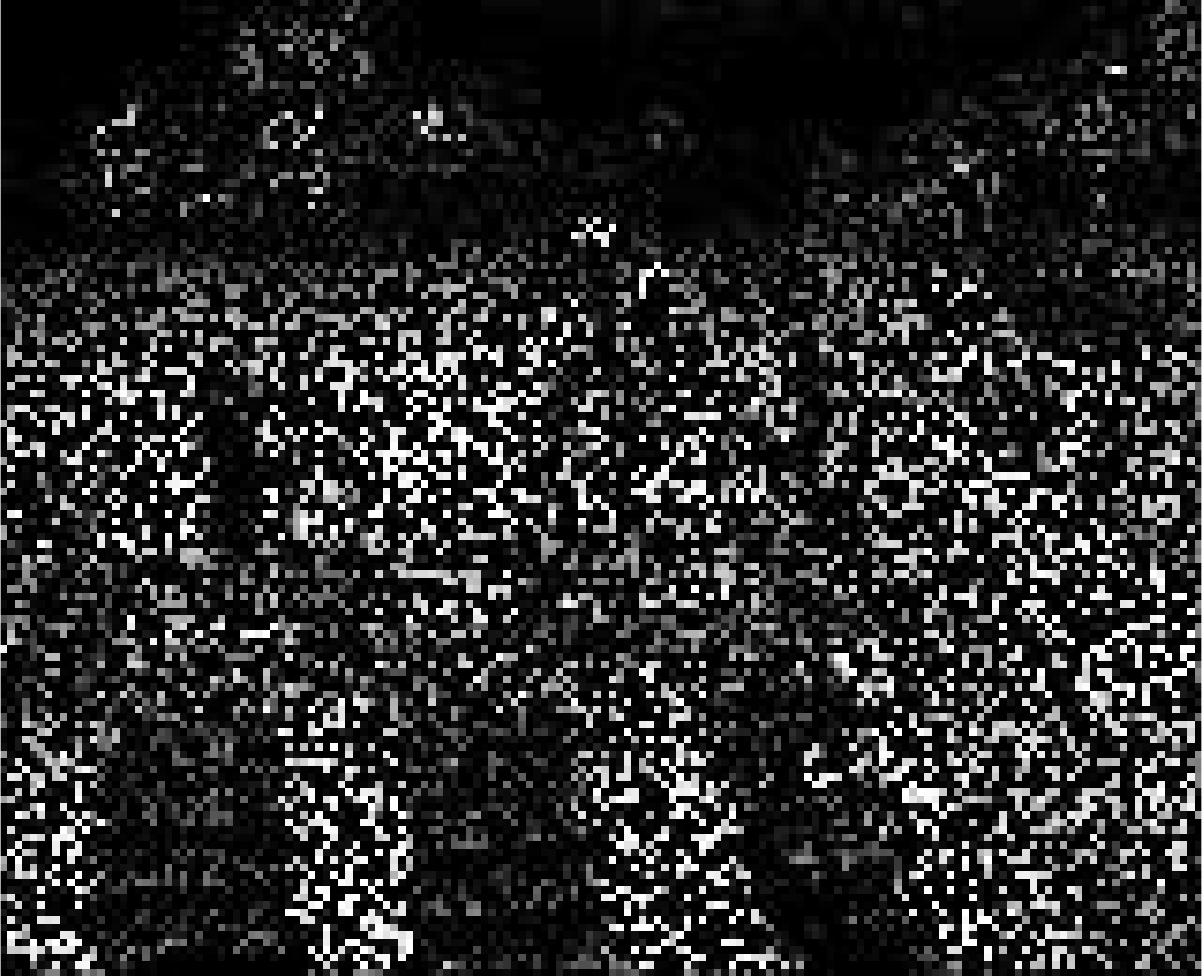}&
\includegraphics[width=0.25\textwidth]{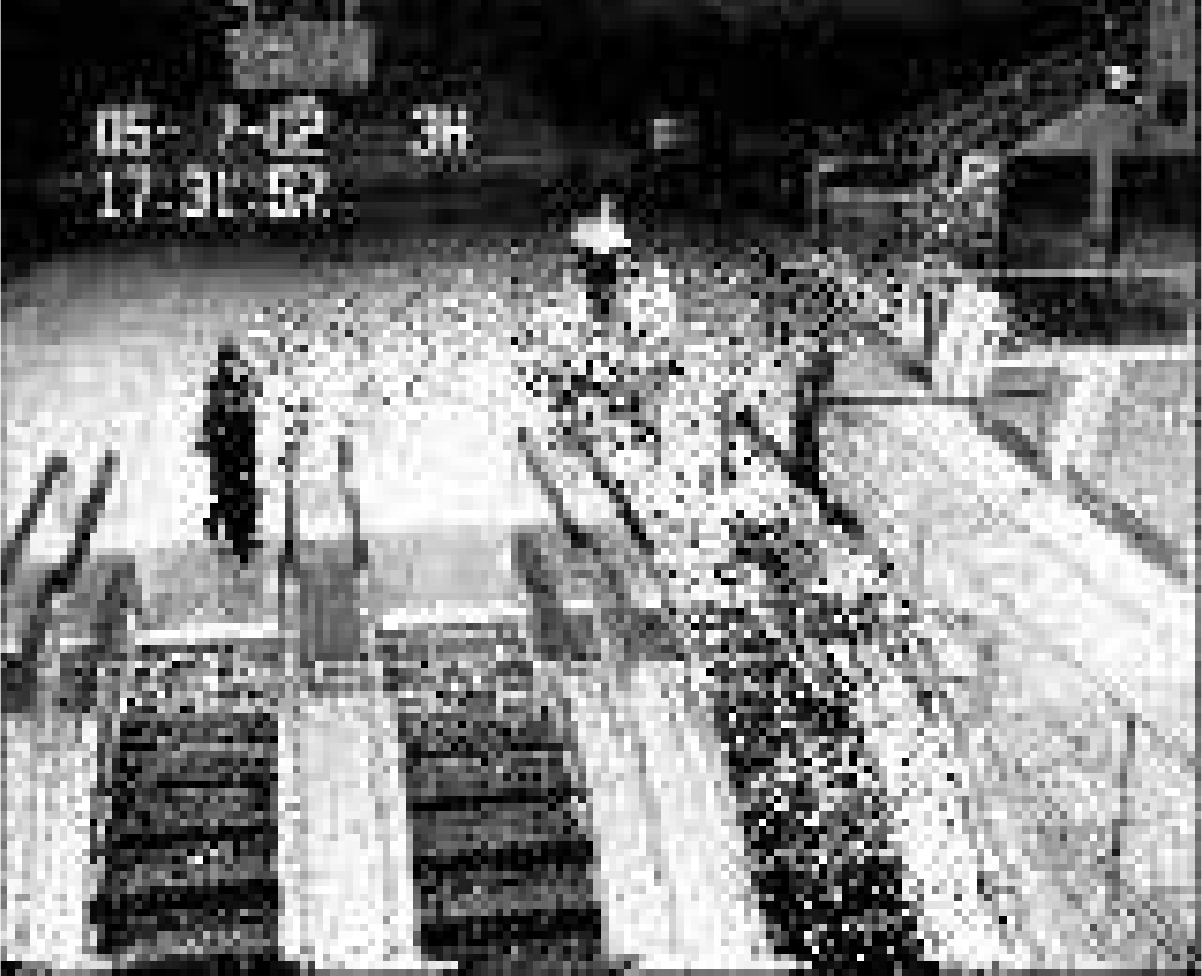}\\
FaLRTC & TMac with dynamic $\alpha_n$'s & TMac with fixed $\alpha_n$'s\\
\includegraphics[width=0.25\textwidth]{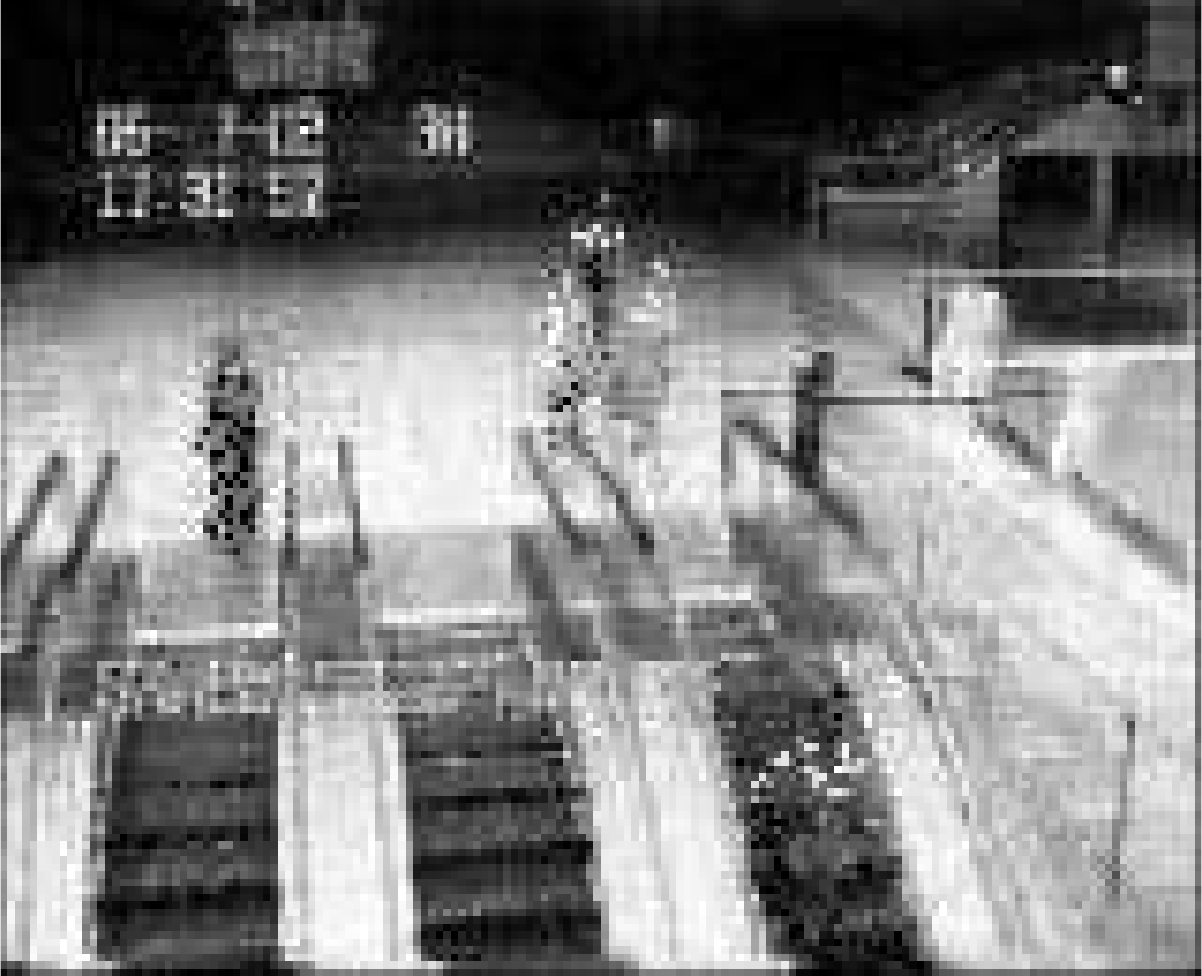}&
\includegraphics[width=0.25\textwidth]{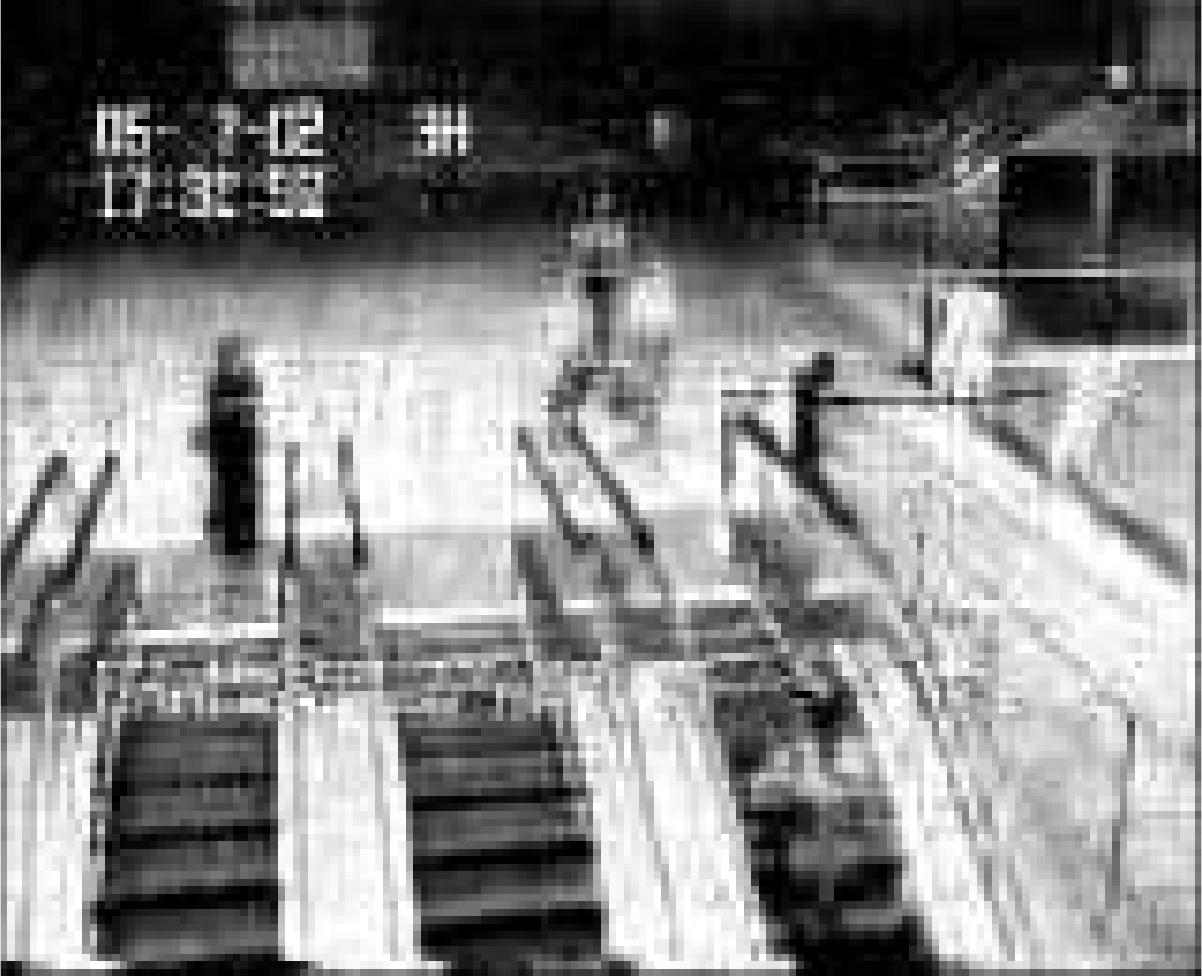}&
\includegraphics[width=0.25\textwidth]{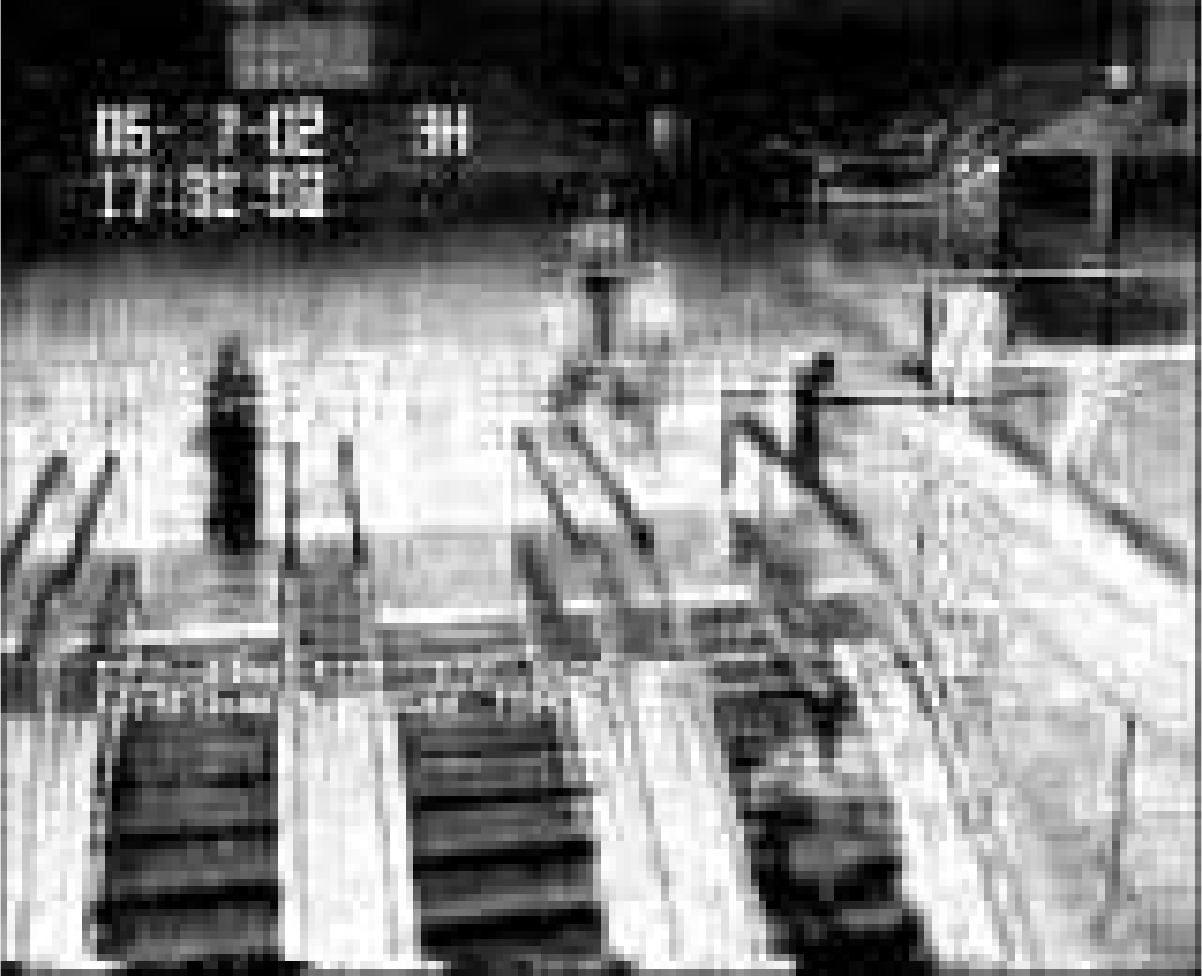}
\end{tabular}}
\end{center}
\end{figure}

\begin{table}[H]
\captionsetup{width=0.99\textwidth}
\caption{Grayscale video: average results of 5 independent runs by different tensor completion methods for different settings of noise level $\sigma$'s and sample ratio SR's.}\label{table:grayvideo}
\begin{center}{\small
\resizebox{\textwidth}{!}{\begin{tabular}{|c|cc|cc|cc|cc|}
\hline
 & \multicolumn{2}{|c|}{TMac with} & \multicolumn{2}{|c|}{TMac with} & \multicolumn{2}{|c|}{\multirow{2}{*}{MatComp}} & \multicolumn{2}{|c|}{\multirow{2}{*}{FaLRTC}}\\
& \multicolumn{2}{|c|}{dynamic $\alpha_n$'s} & \multicolumn{2}{|c|}{fixed $\alpha_n$'s} & \multicolumn{2}{|c|}{} & \multicolumn{2}{|c|}{}\\\hline
SR& relerr & time & relerr & time & relerr & time & relerr & time\\\hline
\multicolumn{9}{|c|}{noise level $\sigma=0$}\\\hline
10\% & 1.45e-01 & 9.23e+01 & 1.41e-01 & 9.77e+01 & 2.51e-01 & 1.88e+01 & 2.35e-01 & 1.22e+02\\
30\% & 9.85e-02 & 5.37e+01 & 1.01e-01 & 5.77e+01 & 1.41e-01 & 2.24e+01 & 1.28e-01 & 6.55e+01\\
50\% & 7.78e-02 & 4.75e+01 & 8.51e-02 & 5.27e+01 & 8.86e-02 & 1.80e+01 & 8.07e-02 & 7.38e+01\\\hline
\multicolumn{9}{|c|}{noise level $\sigma = 0.05$}\\\hline
10\% & 1.45e-01 & 9.79e+01 & 1.41e-01 & 9.49e+01 & 2.47e-01 & 2.13e+01 & 2.36e-01 & 1.36e+02\\
30\% & 9.89e-02 & 5.62e+01 & 1.01e-01 & 5.59e+01 & 1.40e-01 & 1.99e+01 & 1.29e-01 & 6.95e+01\\
50\% & 7.80e-02 & 5.03e+01 & 8.54e-02 & 5.45e+01 & 8.90e-02 & 1.66e+01 & 8.27e-02 & 7.35e+01\\\hline
\multicolumn{9}{|c|}{noise level $\sigma = 0.10$}\\\hline
10\% & 1.46e-01 & 9.68e+01 & 1.43e-01 & 5.45e+01 & 2.47e-01 & 1.93e+01 & 2.38e-01 & 1.42e+02\\
30\% & 1.00e-01 & 5.72e+01 & 1.02e-01 & 5.64e+01 & 1.57e-01 & 2.24e+01 & 1.32e-01 & 6.83e+01\\
50\% & 8.00e-02 & 5.29e+01 & 8.64e-02 & 5.02e+01 & 9.12e-02 & 1.81e+01 & 8.81e-02 & 6.79e+01\\\hline
\end{tabular}}}
\end{center}
\end{table}

\begin{figure}[H]
\captionsetup{width=0.99\textwidth}
\caption{Color video: one original frame, the corresponding frame with 70\% pixels missing and 5\% Gaussian noise, and the recovered frames by different tensor completion methods.}\label{fig:colorvideo}
\begin{center}
{\small
\begin{tabular}{ccc}
Original & 70\% masked and 5\% noise & MatComp\\
\includegraphics[width=0.25\textwidth]{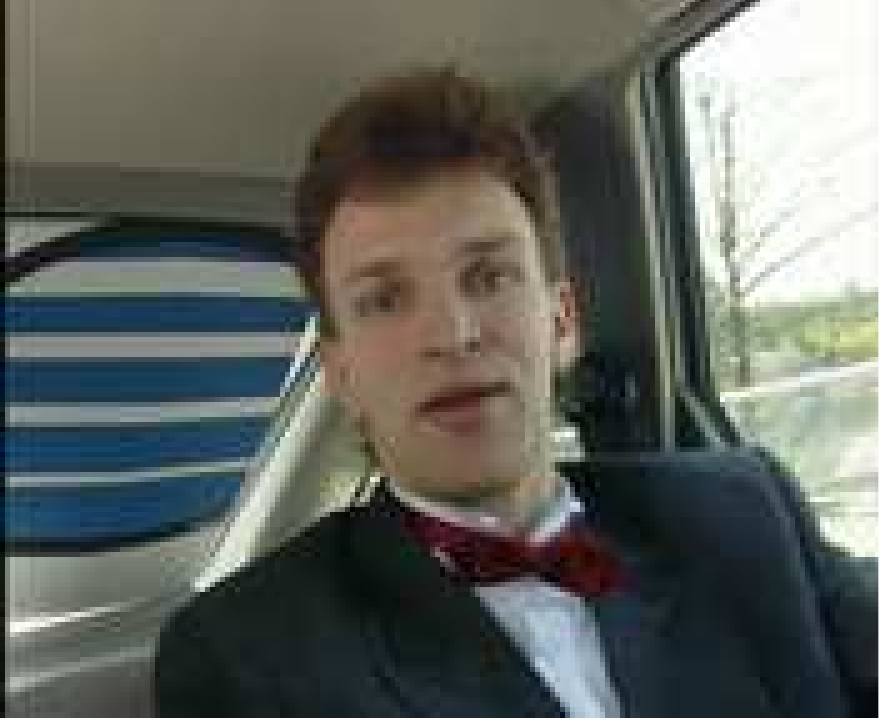}&
\includegraphics[width=0.25\textwidth]{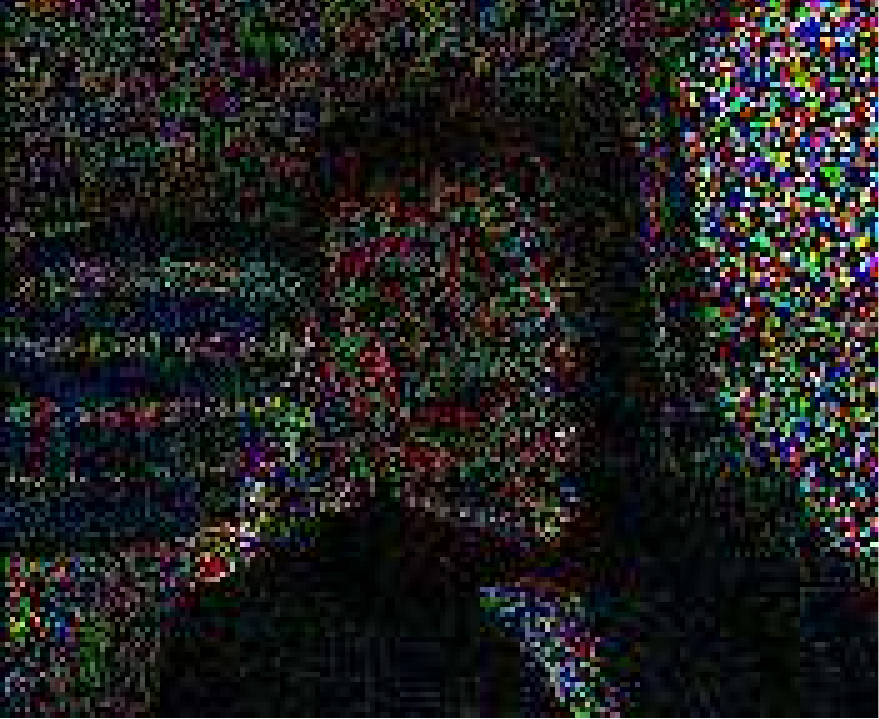}&
\includegraphics[width=0.25\textwidth]{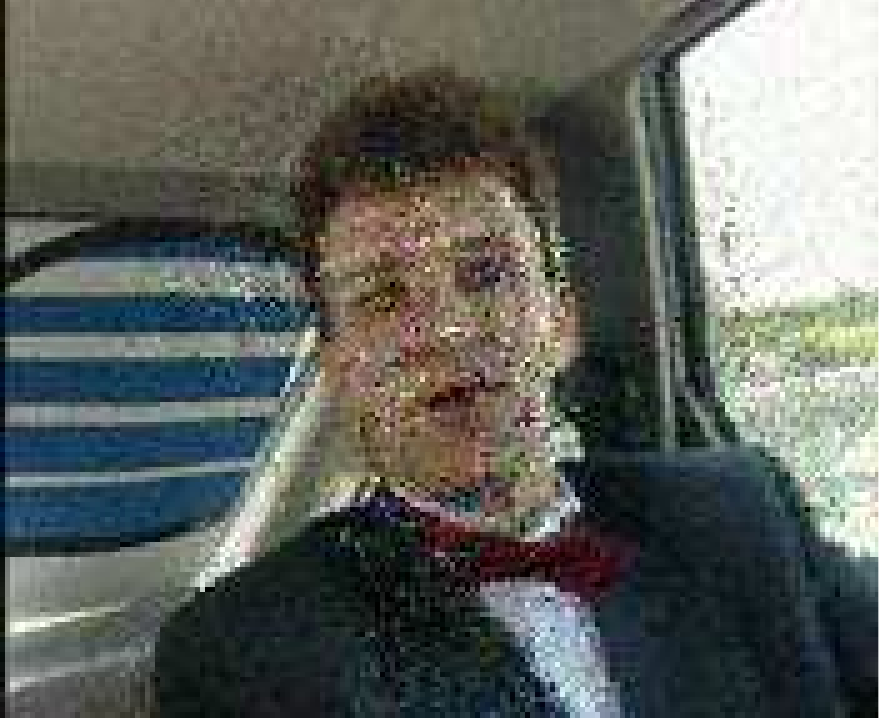}\\
FaLRTC & TMac with dynamic $\alpha_n$'s & TMac with fixed $\alpha_n$'s\\
\includegraphics[width=0.25\textwidth]{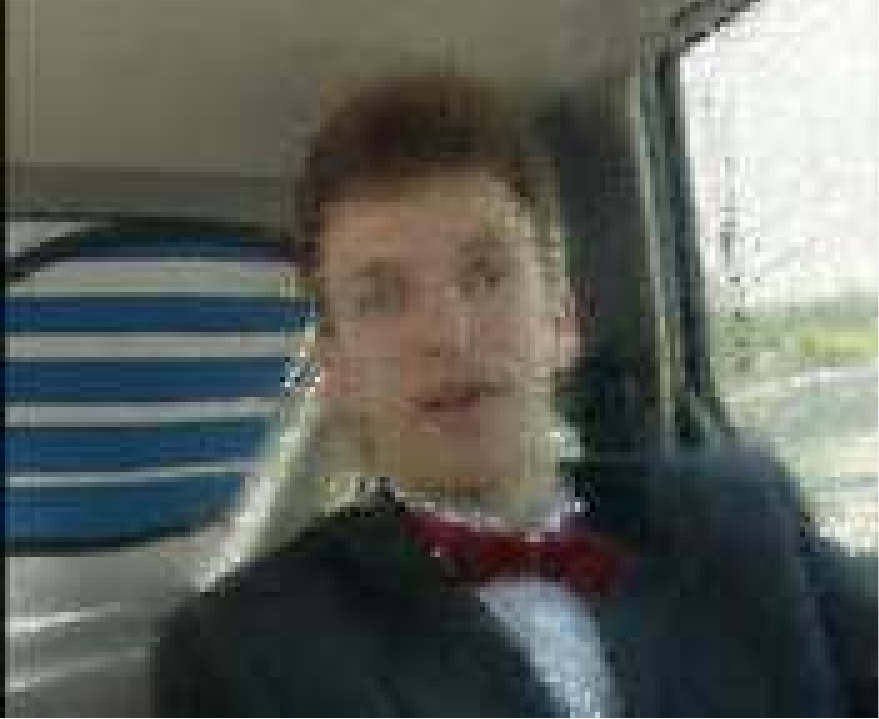}&
\includegraphics[width=0.25\textwidth]{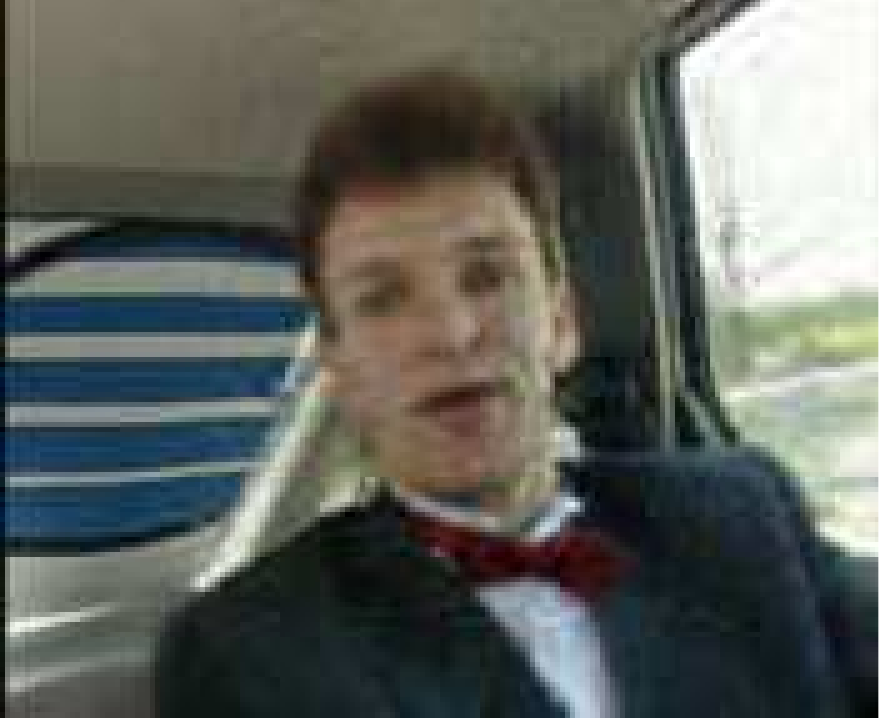}&
\includegraphics[width=0.25\textwidth]{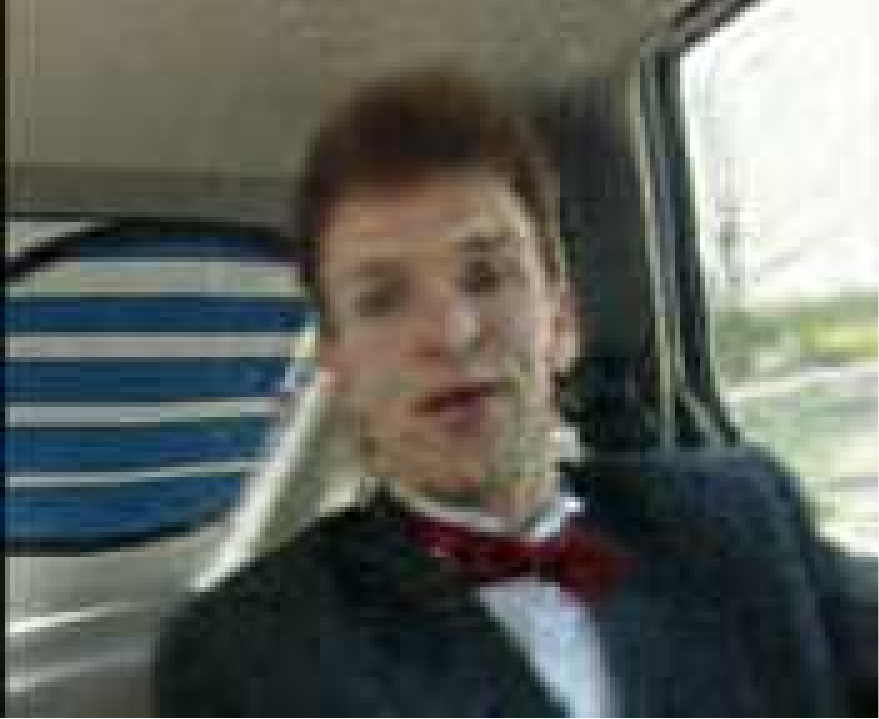}
\end{tabular}}
\end{center}
\end{figure}

\begin{table}[H]
\captionsetup{width=0.99\textwidth}
\caption{Color video: average results of 5 independent runs by different tensor completion methods for different settings of noise level $\sigma$'s and sample ratio SR's.}\label{table:colorvideo}
\begin{center}{\small
\resizebox{\textwidth}{!}{\begin{tabular}{|c|cc|cc|cc|cc|}
\hline
 & \multicolumn{2}{|c|}{TMac with} & \multicolumn{2}{|c|}{TMac with} & \multicolumn{2}{|c|}{\multirow{2}{*}{MatComp}} & \multicolumn{2}{|c|}{\multirow{2}{*}{FaLRTC}}\\
& \multicolumn{2}{|c|}{dynamic $\alpha_n$'s} & \multicolumn{2}{|c|}{fixed $\alpha_n$'s} & \multicolumn{2}{|c|}{} & \multicolumn{2}{|c|}{}\\\hline
SR& relerr & time & relerr & time & relerr & time & relerr & time\\\hline
\multicolumn{9}{|c|}{noise level $\sigma=0$}\\\hline
10\% & 8.27e-02 & 2.64e+02 & 8.17e-02 & 2.65e+02 & 2.42e-01 & 4.78e+01 & 1.82e-01 & 3.19e+02\\
30\% & 5.75e-02 & 1.57e+02 & 5.81e-02 & 1.56e+02 & 1.04e-01 & 1.09e+02 & 8.40e-02 & 2.70e+02\\
50\% & 4.47e-02 & 1.46e+02 & 4.85e-02 & 1.52e+02 & 5.63e-02 & 5.41e+01 & 5.02e-02 & 2.68e+02\\\hline
\multicolumn{9}{|c|}{noise level $\sigma = 0.05$}\\\hline
10\% & 8.38e-02 & 2.73e+02 & 8.22e-02 & 2.52e+02 & 2.43e-01 & 4.92e+01 & 1.83e-01 & 3.14e+02\\
30\% & 5.76e-02 & 1.52e+02 & 5.86e-02 & 1.58e+02 & 1.12e-01 & 8.68e+01 & 8.65e-02 & 2.39e+02\\
50\% & 4.56e-02 & 1.39e+02 & 4.90e-02 & 1.50e+02 & 6.00e-02 & 5.14e+01 & 5.38e-02 & 2.29e+02\\\hline
\multicolumn{9}{|c|}{noise level $\sigma = 0.10$}\\\hline
10\% & 8.75e-02 & 2.46e+02 & 8.70e-02 & 2.43e+02 & 2.50e-01 & 4.77e+01 & 1.86e-01 & 2.89e+02\\
30\% & 5.94e-02 & 1.52e+02 & 6.06e-02 & 1.49e+02 & 1.38e-01 & 8.44e+01 & 9.27e-02 & 2.01e+02\\
50\% & 4.77e-02 & 1.40e+02 & 5.07e-02 & 1.47e+02 & 7.13e-02 & 4.92e+01 & 6.23e-02 & 2.01e+02\\\hline
\end{tabular}}}
\end{center}
\end{table}

\section*{Acknowledgements} The authors thank two
anonymous referees and the associate editor for their very valuable comments. Y. Xu is supported by NSF grant ECCS-1028790. R. Hao and Z. Su are supported by NSFC grants 61173103 and U0935004. R. Hao's visit to UCLA is supported by China Scholarship Council. W. Yin is partially supported by NSF grants DMS-0748839 and DMS-1317602, and ARO/ARL MURI grant FA9550-10-1-0567.

%\bibliographystyle{plain}
%\bibliography{paperCol,Nonconvex,concentration,cs,inverse,my_publications}

\end{document}